\title{Generalization of formal monad theory to lax functors}
\author{Kengo Hirata}
\thanks{I am deeply grateful to my supervisor, Prof. M. Hasegawa, for his guidance and support throughout my MSc research. My sincere thanks also go to Dr. S. Fujii, Dr. Y. Maehara, and my colleagues for the fruitful discussions.}
\address{Research Institute for Mathematical Sciences, Kyoto University \\ Kyoto 606-8502, Japan}
\email{khirata@kurims.kyoto-u.ac.jp, k.hirata@sms.ed.ac.uk}
\keywords{2-category, 2-monad, formal monad theory, Eilenberg-Moore object,
lax doctrinal adjunction, coherence theorem, lax Gray tensor, distributive law}
\subjclass{18N10,18N15}
\definecolor{darkblue}{rgb}{0.2,0,0.6}
\numberwithin{equation}{subsection}
\crefname{equation}{}{}
\crefname{section}{\S\!}{\S\!}
\Crefname{section}{\S\!}{\S\!}
\declaretheoremstyle[
  spaceabove=6pt, spacebelow=6pt,
  headfont=\normalfont\itshape,
  notefont=\mdseries, notebraces={(}{)},
  bodyfont=\normalfont,
  postheadspace=0.5em,
  qed=$\square$
]{myproofstyle}
\declaretheorem[style=plain,numberwithin=subsection,name=Theorem]{theorem}
\declaretheorem[style=plain,sibling=theorem,name=Lemma]{lemma}
\declaretheorem[style=plain,sibling=theorem,name=Proposition]{proposition}
\declaretheorem[style=plain,sibling=theorem,name=Corollary]{corollary}
\newtheorem*{schema}{Theorem-Schema}
\declaretheorem[style=definition,qed=$\blacksquare$,sibling=theorem,name=Definition]{definition}
\declaretheorem[style=definition,qed=$\blacksquare$,sibling=theorem,name=Example]{example}
\declaretheorem[style=definition,qed=$\blacksquare$,sibling=theorem,name=Remark]{remark}
\tikzset{1cell/.style={->, labelsize, auto}}
\tikzset{1cell equality/.style={double,double equal sign distance, labelsize, auto}}
\tikzset{1cell squig/.style={%
  1cell,%
  decorate,%
  decoration={%
    zigzag,%
    segment length=9.25\pgflinewidth,%
    amplitude=1.9\pgflinewidth,%
    post=lineto, post length=6\pgflinewidth,%
    pre=lineto, pre length=6\pgflinewidth,%
  },%
}}
\tikzset{2cell/.style={-implies,double,double equal sign distance,shorten 
>=2pt, shorten <=3pt}}
\tikzset{2cellshort/.style={-implies,double,double equal sign distance,shorten 
>=4pt, shorten <=5pt}}
\tikzset{2cellr/.style={implies-,double,double equal sign distance,shorten 
>=3pt, shorten <=2pt}}
\tikzset{3cell/.style={-implies,double,double distance=2.5pt,shorten >=2pt, 
shorten <=3pt}}
\tikzset{labelsize/.style={font=\scriptsize}}
\tikzset{string/.style={very thick}}
\tikzset{
  pto/.style={->,postaction={decorate},
    decoration={
        markings,
        mark=at position 0.5 with {\arrow{|}}}
  },
}
\newcommand{\twocell}[5][]{\draw[2cell] (#2)++(#4) to node[auto,labelsize,#1]{#3} ++(#5);}
\newcommand{\twocelll}[3][]{\twocell[#1]{#2}{#3}{0.25,0}{-0.5,0}}
\newcommand{\twocellr}[3][]{\twocell[#1]{#2}{#3}{-0.25,0}{0.5,0}}
\newcommand{\twocelld}[3][]{\twocell[#1]{#2}{#3}{0,0.25}{0,-0.5}}
\newcommand{\twocellu}[3][]{\twocell[#1]{#2}{#3}{0,-0.25}{0,0.5}}
\newcommand{\twocellur}[3][]{\twocell[#1]{#2}{#3}{-0.175,-0.175}{0.35,0.35}}
\newcommand{\twocelldr}[3][]{\twocell[#1]{#2}{#3}{-0.175,0.175}{0.35,-0.35}}
\newcommand{\twocelldl}[3][]{\twocell[#1]{#2}{#3}{0.175,0.175}{-0.35,-0.35}}
\newcommand{\pushout}[1]{%
  \draw (#1) to ++(1ex,0ex);%
  \draw (#1) to ++(0ex,-1ex);%
}
\newcommand{\id}{{\mathrm{id}}}
\newcommand{\Id}{{\mathrm{Id}}}
\newcommand{\op}{{\mathrm{op}}}
\newcommand{\co}{{\mathrm{co}}}
\newcommand{\ob}{{\mathrm{ob}}}
\newcommand{\dom}{{\mathrm{dom}}}
\newcommand{\cod}{{\mathrm{cod}}}
\newcommand{\N}{\mathbb{N}}
\newcommand{\cat}[1]{\mathcal{#1}}
\newcommand{\iso}{\cong}
\newcommand{\eqv}{\simeq}
\newcommand{\Cat}{\mathbf{Cat}}
\newcommand{\CAT}{\mathbf{CAT}}
\newcommand{\Ab}{\mathbf{Ab}}
\newcommand{\Set}{\mathbf{Set}}
\newcommand{\TwoCat}{\mathbf{2\text{-}Cat}}
\newcommand{\TwoCAT}{\mathbf{2\text{-}CAT}}
\newcommand{\MonCat}{\mathbf{MonCat}_l}
\newcommand{\Adj}{\mathbf{Adj}}
\newcommand{\Mnd}{\mathbf{Mnd}}
\newcommand{\mnd}{\mathrm{mnd}}
\newcommand{\Dist}{\mathbf{Dist}}
\newcommand{\A}{\cat{A}}
\newcommand{\barA}{\overline{\cat{A}}}
\newcommand{\B}{\cat{B}}
\newcommand{\barB}{\overline{\cat{B}}}
\newcommand{\C}{\cat{C}}
\newcommand{\D}{\cat{D}}
\newcommand{\E}{\cat{E}}
\newcommand{\barC}{\overline{\cat{C}}}
\newcommand{\barD}{\overline{\cat{D}}}
\newcommand{\K}{\cat{K}}
\newcommand{\M}{\cat{M}}
\newcommand{\X}{\cat{X}}
\newcommand{\Y}{\cat{Y}}
\newcommand{\One}{\mathbf{1}}
\newcommand{\Two}{\mathbf{2}}
\newcommand{\Eye}{{\mathbf{2}_\mathbf{2}}}
\newcommand{\one}{\mathbbm{1}}
\newcommand{\cG}{\mathscr{G}}
\newcommand{\cmp}{\mathbf{Cmp}}
\newcommand{\quiv}{\mathbf{Quiv}}
\newcommand{\Q}{\mathcal{Q}}
\newcommand{\QQ}{\mathscr{Q}}
\newcommand{\R}{\mathcal{R}}
\newcommand{\I}{\mathcal{I}}
\newcommand{\TT}[1]{{\left[#1\right]}}
\newcommand{\TP}[1]{{\prescript{2}{P}{}{\left[#1\right]}}}
\newcommand{\TL}[1]{{\prescript{2}{L}{}{\left[#1\right]}}}
\newcommand{\TO}[1]{{\prescript{2}{Op}{}{\left[#1\right]}}}
\newcommand{\Tw}[2]{{\prescript{2}{#1}{}{\left[#2\right]}}}
\newcommand{\PP}[1]{{\prescript{P}{P}{}{\left[#1\right]}}}
\newcommand{\PL}[1]{{\prescript{P}{L}{}{\left[#1\right]}}}
\newcommand{\PO}[1]{{\prescript{P}{Op}{}{\left[#1\right]}}}
\newcommand{\LL}[1]{{\prescript{L}{L}{}{\left[#1\right]}}}
\newcommand{\LO}[1]{{\prescript{L}{Op}{}{\left[#1\right]}}}
\newcommand{\Lw}[2]{{\prescript{L}{#1}{}{\left[#2\right]}}}
\newcommand{\OL}[1]{{\prescript{Op}{L}{}{\left[#1\right]}}}
\newcommand{\OO}[1]{{\prescript{Op}{Op}{}{\left[#1\right]}}}
\newcommand{\Ow}[2]{{\prescript{Op}{#1}{}{\left[#2\right]}}}
\newcommand{\ww}[3]{{\prescript{#1}{#2}{}{\left[#3\right]}}}
\newcommand{\TwoCatTI}{{\mathbf{2\text{-}Cat}_\mathrm{Icon}}}
\newcommand{\TwoCatLI}{{\mathbf{2\text{-}Cat}^\mathrm{Lax}_\mathrm{Icon}}}
\newcommand{\BiCatLI}{{\mathbf{BiCat}^\mathrm{Lax}_\mathrm{Icon}}}
\newcommand{\BiCatLZ}{{\mathbf{BiCat}^\mathrm{Lax}_0}}
\newcommand{\walg}[3]{{{#1}\text{-}{#3}\text{-}\mathrm{Alg}_{#2}}}
\newcommand{\alg}[2]{{{#2}\text{-}\mathrm{Alg}_{#1}}}
\newcommand{\palg}[2]{{\mathrm{Ps}\text{-}{#2}\text{-}\mathrm{Alg}_{#1}}}
\newcommand{\lalg}[2]{{\mathrm{Lax}\text{-}{#2}\text{-}\mathrm{Alg}_{#1}}}
\newcommand{\calg}[2]{{\mathrm{OpLax}\text{-}{#2}\text{-}\mathrm{Alg}_{#1}}}
\newcommand{\lladj}[1]{{{#1}\text{-}\mathrm{LDAdj}}}
\newcommand{\alA}{\mathbb{A}}
\newcommand{\alB}{\mathbb{B}}
\newcommand{\alX}{\mathbb{X}}
\newcommand{\alY}{\mathbb{Y}}
\newcommand{\g}{\mathbb{g}}
\newcommand{\morl}{\mathbb{l}}
\newcommand{\f}{\mathbb{f}}
\newcommand{\p}{\mathbb{p}}
\newcommand{\q}{\mathbb{q}}
\newsavebox{\ia}
\sbox{\ia}{%
  \begin{tikzpicture}[y=0.5cm,baseline={(0,0)}]
    \coordinate(l) at (0,0);
    \coordinate(r) at (0.5,0);
    \node at (0.25,0.3) [font=\tiny]{$\bot$};
    \draw[1cell, transform canvas={yshift=8pt}] (l) to (r);
    \draw[1cell, transform canvas={yshift=0.5pt}] (r) to (l);
  \end{tikzpicture}\!
}
\newcommand{\inlineadj}{\mathrel{\usebox{\ia}}}
\NewDocumentCommand{\lend}{e{_^}}{
  \newintop{\hspace{-0.09em}\leftarrow}[#1][#2]%
}%
\NewDocumentCommand{\psend}{e{_^}}{
  \newintop{\sim}[#1][#2]%
}%
\NewDocumentCommand{\newintop}{moo}{%
  \ThisStyle{\ensurestackMath{%
  \stackengine{0pt}{%
    \SavedStyle\int%
      \IfValueT{#2}{_{#2}}%
      \IfValueT{#3}{^{#3}}%
  }{%
    \SavedStyle#1%
  }{O}{l}{F}{F}{L}}}}%
\begin{document}

\begin{abstract}
  We study lax functors between bicategories as a generalized concept of monads
  and describe generalized notions and theorems of formal monad theory for lax functors.
  Our first approach is to use the 2-monad whose lax algebras are lax functors.
  We define lax doctrinal adjunctions for a 2-monad $T$ on a 2-category $\mathcal{K}$,
  and we show that if $\mathcal{K}$ admits and $T$ preserves certain codescent objects,
  the 2-category $\mathrm{Lax}\text{-}{T}\text{-}\mathrm{Alg}_{c}$ of lax algebras and colax morphisms can coreflectively be embedded
  in the 2-category of lax doctrinal adjunctions.
  This coreflective embedding generalizes the relation between monads and adjunctions.
  Our second approach is to see a distributive law for monads as a 2-functor from a lax Gray tensor product,
  and we show a generalized form of Beck's characterization of distributive laws.
\end{abstract}

\maketitle
\tableofcontents
\section{Introduction}\label{sec:intro}
A lax functor $F\colon\C\rightsquigarrow\D$ between bicategories
is a 2-categorically weaker notion of functors,
which does not need to preserve compositions strictly as $Fg.Ff = F(gf)$ and $1_{FA} = F1_A$,
but it only requires the existence of comparison 2-cells
$Fg.Ff \Rightarrow F(gf)$ and $1_{FA} \Rightarrow F1_A$
together with some coherence conditions.

Many interesting structures arise as lax functors, often with very simple domains or codomains.
The simplest and most important case is when the domain $\C$ is the terminal 2-category $\One$.
In this case, a lax functor $\One\rightsquigarrow\D$ is a monad in the bicategory $\D$.
Another case is when the domain is restricted to an indiscrete 2-category $X^\mathrm{ind}$
whose set of objects is $X$.
This time, a lax functor $X^\mathrm{ind} \rightsquigarrow \B$ is a $\B$-enriched category
whose set of the objects is $X$.

When we restrict the domain and codomain to single object bicategories,
a lax functor is merely a lax monoidal functor between monoidal categories.
For example, the forgetful functor $\Ab\rightarrow\Set$ has a canonical lax monoidal structure.
Other examples include graded rings, i.e.,
to give a graded ring $R \cong \bigoplus_{n}R_n$ is to give a lax monoidal functor
$\N^{\mathrm{dis}}\rightarrow\Ab; n \mapsto R_n$ where $\N^{\mathrm{dis}}$
is a discrete category whose objects are natural numbers with the monoidal structure defined by addition.

If $\D$ is the single object full sub-2-category of $\Cat$ with the object $\mathbb{C}$,
which is just a monoidal category of endo-functors $\mathrm{End}(\mathbb{C})$,
a lax functor $\A\rightsquigarrow\mathrm{End}(\mathbb{C})$ from a 2-category $\A$
is a 2-category-graded monad,
which includes all the notions ordinary monads, graded monads~\cite{Fujii_2019_grad_index_monads,FujiiKM16}, and category graded monads
\cite{OrchWadlEade_2020_unif_grad_param_mnd}.

In this paper, we view a lax functor as a generalized monad,
and we generalize notions and theorems of monads to lax functors.
Our results will be presented in \Cref{sec:adj_lax_classifier,sec:Gray_dist}.
In \Cref{sec:adj_lax_classifier}, we study the generalization of the relation of monads and adjunctions,
including Eilenberg-Moore/Kleisli objects.
This result includes a generalization of work by Street~\cite{Street_1972_two_const_lax_func}: his work was about a lax functor $\A \to \Cat$ where $\A$ is 1-category, which we generalize to a lax functor $\A \rightsquigarrow \K$ between 2-categories.
In \Cref{sec:Gray_dist}, we study the generalization of distributive laws of monads.

In detail, we study the lax algebras of a 2-monad $T$ on a 2-category $\K$ in \Cref{sec:adj_lax_classifier}.
{\color{red}%
    The content of this section turned out to be almost identical to Section 8 to Lack's paper ``Morita Contexts as Lax Functors'' in 2014
}\cite{Lack_2014_morita}.
A reader can also find similar contents with clearer proofs in \cite{Miloslav} as well.
The key player in the section is an adjunction $X\inlineadj A$ in the base 2-category $\K$
whose codomain $A$ of the left adjoint has a structure of strict algebra $\mathbb{A}$,
which will be called a \emph{lax doctrinal adjunction}.
The name lax doctrinal adjunction is taken after the \emph{doctrinal adjunction} introduced by Kelly in his paper~\cite{Kelly_1974_doctrinal_adjunction}.
Recall that a doctrinal adjunction consists of the following data: an adjunction $A\inlineadj B$ in the base 2-category $\K$
in which both objects $A$ and $B$ are equipped with structures of strict algebras $\mathbb{A}$ and $\mathbb{B}$
together with the structure of lax morphism on the right adjoint,
which is equivalent to give a colax morphism structure on the left adjoint.
Although a structure of strict algebra is given only to one side $A$
in our lax doctrinal adjunction $X\inlineadj A$,
a lax algebra structure $\mathbb{X}$ on $X$ can be induced along the left adjoint,
and we can show that the right and left adjoint has a canonical structure of lax and colax morphisms.

The main result in \Cref{sec:adj_lax_classifier} is the coreflective embedding theorem (\Cref{thm:coreflective_embedding_thm}),
which proves that if $\K$ admits and $T$ preserves certain codescent objects,
$T$ satisfies the coherence condition for lax algebras.
That is, $\lalg{c}{T}$ is a coreflective sub-2-category of lax doctrinal adjunctions.
Since a lax algebra and a lax doctrinal adjunction of the identity 2-monad is
just a monad and an adjunction in $\K$,
this coreflective embedding is the generalization of the correspondence between monads and Kleisli adjunctions.
When we consider the 2-monad $T^\mathbf{Fun}$ on $\TT{\ob(\A), \K}$ whose strict algebra is a 2-functor
$\A\rightarrow\K$ and whose lax algebra is a lax functor $\A\rightsquigarrow\K$,
we obtain the correspondence between lax functors and pointwise adjunction between its Kleisli 2-functor in \cite{Street_1972_two_const_lax_func}.

In \Cref{sec:Gray_dist}, we study a generalization of Beck's distributive laws \cite{Beck_1969_dist_law}.
Let $\K$ be a 2-category and $\C$ be a bicategory.
We denote the 2-category of lax functors, lax natural transformations, and modification by $\LL{\C,\K}$.
If $\A$ is a 2-category, let $\TL{\A,\K}$ be the full sub-2-category of $\LL{\A,\K}$ whose objects are 2-functors.
In \cite{Nikolic_2019_strict_tensor_prod}, Nikoli\'{c} observed that a tensor product 2-category
$\C\boxtimes\D$ of bicategories $\C$ and $\D$ can be defined to have the universal property of
$\LL{\B,\LL{\A,\K}} \cong \TL{\A\boxtimes\B,\K}$,
and he gave two direct descriptions of the construction of $\C\boxtimes\D$.
Since a distributive law between monads is known to be equivalent to
a monad in the 2-category $\mnd_l(\K)$ of monads in $\K$,
Nikoli\'{c}'s tensor product includes the construction of
the free 2-category $\One\boxtimes\One$ of a distributive law
satisfying $\TL{\One\boxtimes\One, \K} \cong \mnd_l(\mnd_l(\K))$ as a specific case.

Although Nikoli\'{c} gave two direct constructions of $\C\boxtimes\D$,
we use the 2-step description $\barC\otimes_l\barD$ using the lax functor classifier $\overline{\A}$
and the lax Gray tensor product $\otimes_l$ as remarked in Section 4 in Nikoli\'{c}'s paper.
Here, a lax functor classifier 2-category $\barC$ of a bicategory $\C$ is universal for
of $\LL{\C,\K} \iso \TL{\barC, \K}$,
and a lax Gray tensor product $\A\otimes_l\B$ of 2-categories $\A$ and $\B$ is universal for
of $\TL{\B,\TL{\A,\K}} \iso \TL{\A\otimes_l \B, \K}$.

We prove Beck's characterization of distributive laws in a generalized form
with Nikoli\'{c}'s generalized distributive law,
which is our main contribution in \Cref{sec:Gray_dist}.
In particular, we show that every lax functor classifier $\barC$ has a canonical
comonoid structure in the monoidal category $(\TwoCat_0,\otimes_l,\One)$,
which induces a composition of generalized monads.

\section{2-categories preliminaries}\label{sec:2_caty_pre}
\subsection{Terminology and preliminaries}

When we say 2-category, 2-functor, or 2-natural transformation, we mean a strict one:
$\Cat$-enriched category, $\Cat$-functor, or $\Cat$-natural transformation.
On the other hand, when we say bicategory, pseudo functor, or pseudo natural transformation,
we shall mean the weaker up-to-isomorphism notion.
Furthermore, when we say lax as a prefix, we shall assume that there is only a comparison map
with coherence conditions.

Besides the 2-/pseudo/(op) lax natural transformations,
we also consider another type of transformation called \emph{icon}s \cite{Lack_2007_icons},
named for Identity Component Oplax Natural transformations.
Here is the table of the names of the 2-categories to which we will refer.

\begin{table}[H]
  \centering
  \rowcolors{2}{gray!25}{white}
  \begin{tabular}{c!{\color{white}\vline width 1pt}lll}
    \rowcolor{gray!50}
    2-category          & object      & 1-cell           & 2-cell        \\
    $\TwoCat$           & 2-category  & 2-functor        & 2-nat. trans. \\
    $\TwoCatTI$         & 2-category  & 2-functor        & icon          \\
    $\TwoCatLI$         & 2-category  & lax functor      & icon          \\
    $\BiCatLI$          & bicategory  & lax functor      & icon          \\
    $\ww{w}{w'}{\A,\K}$ & $w$-functor & $w'$-nat. trans. & modification
  \end{tabular}
\end{table}

At the bottom of the table above, we assume that $w$ and $w'$ are elements in the set $\{s,p,l,c\}$,
where each letter in $\{s,p,l,c\}$ represents strict, pseudo, lax, and colax/oplax.
We use these subscripts $w\in\{s,p,l,c\}$ throughout this paper.
The dual subscript $\overline{w}$ is defined as $\overline{l} = c, \overline{c} = l,$ and $\overline{s} = s,\overline{p} = p$.
When we draw a diagram, weak morphisms are sometimes described with zigzag arrows ``$\rightsquigarrow$''
to distinguish them from strict ones.

More concretely, the functor 2-categories are denoted as in the table below.

\begin{table}[H]
  \centering
  \rowcolors{2}{gray!25}{white}
  \begin{tabular}{c!{\color{white}\vline width 1pt}cccc}
    \rowcolor{gray!50}
           &
    \multicolumn{4}{c}{functor}                                        \\
    \arrayrulecolor{gray!50} \specialrule{5pt}{0pt}{-8pt}
    \arrayrulecolor{white} \cmidrule[1pt]{2-5}
    \arrayrulecolor{block}
    \rowcolor{gray!50}
    \multirow{-2}{*}{\shortstack{\vspace{1em}
    \\ natural
    \\ transformation }}
           & 2-           & pseudo       & lax          & oplax        \\
    2-     & $\TT{\A,\K}$ &              &              &              \\
    pseudo & $\TP{\A,\K}$ & $\PP{\A,\K}$ &              &              \\
    lax    & $\TL{\A,\K}$ & $\PL{\A,\K}$ & $\LL{\A,\K}$ & $\OL{\A,\K}$ \\
    oplax  & $\TO{\A,\K}$ & $\PO{\A,\K}$ & $\LO{\A,\K}$ & $\OO{\A,\K}$
  \end{tabular}
\end{table}

We omitted the upper right part since we usually do not consider stricter transformations between weaker functors.
For a bicategory $\C$, we use the same notation for seven functor categories at the right-hand side of the table.
The duality of those functor categories can be stated in the following way.

\begin{proposition}
  Let $\C$ and $\D$ be bicategories.
  Then, there are the following isomorphism of bicategories for each $w,w'\in\{p,l,c\}$,
  and if $\C$ and $\D$ are both 2-categories, the same for $w,w'\in\{s, p,l,c\}$.
  \[
    \ww{w}{w'}{\C, \D}
    \ \iso\ {\ww{w}{\overline{w}'}{\C^\op,\D^\op}}^\op
    \ \iso\ {\ww{\overline{w}}{\overline{w}'}{\C^\co,\D^\co}}^\co
  \]
\end{proposition}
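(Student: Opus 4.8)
The plan is to exhibit both displayed isomorphisms as genuine (strict) isomorphisms of bicategories by writing down, once and for all, a dualization dictionary that reinterprets every piece of data---functor, transformation, modification---in the opposite bicategories, and then checking that the defining axioms of one side translate verbatim into those of the other. Since $(-)^\op$ and $(-)^\co$ are involutions, the assignments I construct are automatically mutually inverse, so the only real content is that they are well defined (land in the advertised functor and transformation types) and strictly preserve all the compositions. I would treat the two isomorphisms in parallel and remark at the end that composing them recovers the ``coop'' duality as a consistency check.

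For the first isomorphism I would start from the two elementary facts about a single functor. A lax functor $F\colon\C\rightsquigarrow\D$, with comparison cells $Fg\cdot Ff\Rightarrow F(gf)$, transports under $(-)^\op$ to $F^\op\colon\C^\op\rightsquigarrow\D^\op$ whose comparison is the \emph{same} 2-cell read over the reversed composite; since $(-)^\op$ fixes 2-cells while swapping the order of horizontal composition, the direction of the comparison is unchanged, so $F^\op$ is again lax. Thus the functor-type index $w$ is fixed by $(-)^\op$, matching the statement. On 1-cells, a $w'$-natural transformation $\alpha\colon F\Rightarrow G$ reinterprets as a transformation $G^\op\Rightarrow F^\op$: the components $\alpha_A$ are unchanged, but the naturality 2-cell $Gf\cdot\alpha_A\Rightarrow\alpha_B\cdot Ff$ now sits over the reversed composites, which turns a lax square into an oplax one; hence $w'\mapsto\overline{w}'$, and the reversal of the transformation's direction is exactly absorbed by the outer $(-)^\op$ on the functor bicategory. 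Modifications, being families of 2-cells, are carried to modifications unchanged.

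For the second isomorphism the bookkeeping is dual: $(-)^\co$ fixes 1-cells and reverses 2-cells. The comparison cell of a lax $F$ is reversed, so $F^\co\colon\C^\co\rightsquigarrow\D^\co$ is oplax and $w\mapsto\overline{w}$; a $w'$-transformation $\alpha\colon F\Rightarrow G$ keeps its direction (since $(-)^\co$ fixes the 1-cells of the functor bicategory, whence the outer $(-)^\co$ does not flip $F\Rightarrow G$) but has its naturality 2-cell reversed, so again $w'\mapsto\overline{w}'$; and modifications reverse, which is precisely recorded by the outer $(-)^\co$. After the dictionary is fixed, the remaining step in both cases is to verify strict functoriality: that vertical and horizontal composites of transformations, identities, and each coherence axiom (the unit and associativity laws for a lax functor, and the respect-for-composition and unit laws for a lax transformation) are sent to their counterparts. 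Each such axiom is a commuting pasting diagram, and reversing composition order (for $(-)^\op$) or 2-cell direction (for $(-)^\co$) turns it into the corresponding axiom on the other side, so the checks are mechanical.

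I expect the only real difficulty to be notational rather than mathematical: one must track simultaneously which of composition-order and 2-cell-direction is being reversed, and how that interacts with the lax/oplax conventions for comparison cells and for naturality squares, without introducing an inconsistency between the functor-level and transformation-level dualizations. The cleanest route---and the one I would follow---is to fix conventions once for lax/oplax functors and transformations, establish the dictionary on each type of cell, and then apply it uniformly to all of the cases $w,w'\in\{p,l,c\}$ (and $\{s,p,l,c\}$ when $\C,\D$ are 2-categories) at a single stroke, rather than arguing case by case.
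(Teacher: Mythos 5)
Your proposal is correct, and the direction bookkeeping all checks out: under $(-)^\op$ the comparison cells keep their sense (so $w$ is fixed) while naturality squares flip ($w'\mapsto\overline{w}'$) and the reversal of transformation direction is absorbed by the outer $(-)^\op$; under $(-)^\co$ the comparison cells flip ($w\mapsto\overline{w}$), transformations keep their direction with flipped naturality cells, and the reversal of modifications is absorbed by the outer $(-)^\co$. Note that the paper states this proposition with no proof at all, treating it as routine duality bookkeeping, so your dualization dictionary is precisely the standard argument the authors left implicit; the only point worth stating explicitly when writing it up is that the dictionary is a bijection on all cells that strictly preserves identities and both compositions, which is what upgrades it from an equivalence to an isomorphism of bicategories.
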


The following asserts the functoriality of $\ww{w}{w'}{-,-}$.

\begin{proposition}
  Let $\C$ be a bicategory and $\K$ be a 2-category.
  \begin{enumerate}
    \item $\ww{w}{w'}{\C,-}\colon \TwoCat \rightarrow \TwoCAT$ is a 2-functor.
    \item $\ww{w}{w'}{-,\K}\colon \TwoCat_0^\op \rightarrow \TwoCAT_0$ is an ordinary functor.
  \end{enumerate}
\end{proposition}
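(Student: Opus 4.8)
The plan is to realise both variables through the evident whiskering operations and to reduce every verification to a single structural principle: a strict 2-functor has identity comparison cells and a 2-natural transformation has identity naturality 2-cells, so whiskering by such data transports the structural cells of a $w$-functor, a $w'$-natural transformation, or a modification \emph{verbatim}, creating no new coherence obligations. Concretely, $\ww{w}{w'}{\C,-}$ acts by post-composition and $\ww{w}{w'}{-,\K}$ by pre-composition, and in each case the functor axioms come down to associativity and unitality of composition in $\TwoCAT$ together with the interchange law.

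For part~(i), fix the bicategory $\C$ and a 2-functor $G\colon\K\to\E$. First I would define $\ww{w}{w'}{\C,G}$ on objects, 1-cells and 2-cells by post-composing with $G$, sending a $w$-functor $M$ to $GM$, a $w'$-transformation $\theta$ to $G\theta$, and a modification to its $G$-image; the point to check is that each output lies in the correct class, which holds because the comparison cells of $GM$ are the $G$-images of those of $M$ and the defining axioms are obtained by applying the pasting-preserving $G$ to the axioms for $M$, and likewise for $G\theta$ and for modifications. Since the vertical composite of $w'$-transformations and the vertical and horizontal composites of modifications are all computed componentwise by pasting in $\K$, the map $\ww{w}{w'}{\C,G}$ preserves identities and composites and is therefore a 2-functor, while $\ww{w}{w'}{\C,\id_\K}=\id$ and $\ww{w}{w'}{\C,G'G}=\ww{w}{w'}{\C,G'}\,\ww{w}{w'}{\C,G}$ are immediate from associativity and unitality of post-composition.

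The genuinely two-dimensional content of~(i) is the action on a 2-natural transformation $\alpha\colon G\Rightarrow H$, and this is the step I expect to carry the weight. I would set the component of $\ww{w}{w'}{\C,\alpha}$ at $M$ to be the post-whiskering $\alpha M$, with value $\alpha_{MA}$ at an object $A$. The key verification is that $\alpha M$ is a legitimate $w'$-transformation $GM\Rightarrow HM$: for a 1-cell $f\colon A\to B$ of $\C$ its naturality 2-cell is forced by 2-naturality of $\alpha$ at $Mf$, which makes $\alpha_{MB}\cdot G(Mf)$ and $H(Mf)\cdot\alpha_{MA}$ literally equal, so this naturality cell is an identity and its compatibility with the comparison cells of $GM$ and $HM$ is automatic. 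Because those comparison cells are the $G$- and $H$-images of the ones for $M$, the same 2-naturality of $\alpha$ then yields, for every $w'$-transformation $\theta\colon M\Rightarrow N$, the strict equality $(H\theta)\cdot(\alpha M)=(\alpha N)\cdot(G\theta)$ together with its analogue for modifications, which is precisely the 2-naturality of $\ww{w}{w'}{\C,\alpha}$; preservation of identities and of the vertical and horizontal composites of the $\alpha$'s then follows from the corresponding laws for whiskering in $\E$.

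For part~(ii), fix the 2-category $\K$ and define $\ww{w}{w'}{F,\K}$ by pre-composition with a 2-functor $F\colon\C\to\D$ between 2-categories; exactly as above, strictness of $F$ shows that $MF$ is again a $w$-functor and that pre-composition preserves $w'$-transformations and modifications, so it is a 2-functor, while $\ww{w}{w'}{\id_\C,\K}=\id$ and $\ww{w}{w'}{FF',\K}=\ww{w}{w'}{F',\K}\,\ww{w}{w'}{F,\K}$ give the \emph{contravariant} ordinary functor $\TwoCat_0^\op\to\TwoCAT_0$. I would close with a remark accounting for the asymmetry with~(i), i.e. for why the first variable is recorded only at the level of $\TwoCat_0$ and cannot in general be promoted to a 2-functor on $\TwoCat$: pre-whiskering a 2-natural $\sigma\colon F\Rightarrow F'$ onto a merely lax or oplax $M$ produces, at a 1-cell $f\colon A\to B$, a \emph{zig-zag} of two comparison cells of $M$ both pointing at the common 1-cell $M(F'f\cdot\sigma_A)$ (using $\sigma_B\cdot Ff=F'f\cdot\sigma_A$ from 2-naturality of $\sigma$), and these cannot be composed into a single directed naturality 2-cell; hence $M\sigma$ fails to be a $w'$-transformation unless $w=s$, and the contravariant variable admits no coherent two-dimensional extension in the weak cases---whereas in~(i) post-whiskering by a \emph{strict} $\alpha$ collapsed exactly those cells to identities.
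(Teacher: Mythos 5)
Your proof is correct and is essentially the paper's own route: the paper merely cites Gray's direct whiskering verification (for $\C$ a 2-category and $w=s$) and notes that the same argument works for general $w$, and what you have written is exactly that argument, with the crucial point being — as you indicate, though it deserves to be said explicitly rather than called ``automatic'' — that the compatibility of $\alpha M$ with the comparison cells of $GM$ and $HM$ is not free but is precisely the two-dimensional naturality of $\alpha$ instantiated at the 2-cells $M^0_A$ and $M^2_{g,f}$ of $\K$. The paper additionally offers an alternative reduction of the general-$w$ case to Gray's strict case via the $w$-functor classifier of \Cref{sec:adj_lax_classifier}, which your direct argument bypasses.
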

\begin{proof}
  Both were proved 
  in \cite{Gray_1974_formal_caty_theor} for the case $\C$ is a 2-category and $w = s$.
  A similar proof works for general $w$.
  One can also reduce those general cases to the case Gray proved 
  with the $w$-functor classifier in \Cref{sec:adj_lax_classifier}.
\end{proof}

\begin{remark}\label{rem:ww_is_not_2-functorial_in_first_variable}
  The functor $\ww{w}{w'}{-,\K}$ does not have a canonical extension to a 2-functor from $\TwoCat$ except for the case $w=w'=s$.
  Let us first examine the worst case when $w\in\{l,c\}$.
  If we wish to post-compose a lax functor $F$ to a 2-natural transformation $\alpha\colon K\Rightarrow H$,
  it ends up with uncomposable pair of 2-cell $FHf.F\alpha_A \Rightarrow F(Hf.\alpha_A)$
  and $F\alpha_B.FKf \Rightarrow F(\alpha_B.Kf) = F(Hf.\alpha_A)$ for each $f\colon A\rightarrow B$.
  Therefore, we do not know where to map $\alpha$ by $\ww{l}{w'}{-, \K}$.
  \[
    \begin{tikzpicture}
      \node(GKA) at (0,2) {$FKA$};
      \node(GKA') at (0,0) {$FKB$};
      \node(GHA) at (4,2) {$FHB$};
      \node(GHA') at (4,0) {$FHB$};
      \draw[1cell,swap] (GKA) to node {$FKf$} (GKA');
      \draw[1cell] (GHA) to node {$GHf$} (GHA');
      \draw[1cell] (GKA) to node {$F\alpha_A$} (GHA);
      \draw[1cell,swap] (GKA') to node {$F\alpha_{B}$} (GHA');
      \draw[1cell,bend right=15,swap,sloped] (GKA) to node [auto=false,fill=white] {$F(\alpha_{B}.K\!f)$} (GHA');
      \draw[1cell,bend left=15, sloped] (GKA) to node[auto=false,fill=white] {$F(H\!f.\alpha_A)$} (GHA');
      \draw[1cell equality] ($(2,1)+(0.1,0.1)$) to ($(2,1)-(0.1,0.1)$);
      \twocelldl[]{3.1,1.5}{$F^2$}
      \twocellur[swap]{0.7,0.6}{$F^2$}
    \end{tikzpicture}
  \]

  The better but unsuccessful case is when $w=p$.
  We can compose the 2-cells above, so we obtain a pseudo natural transformation.
  But since there is no canonical bicategory with pseudo natural transformations as 2-cells,
  and it is unavoidable to think of tricategories.

  The most subtle case is $w=s$ and $w'\in\{p,l,c\}$.
  Let $\alpha\colon F\Rightarrow G$ be a 2-natural transformation,
  and $\beta\colon K\Rightarrow H$ be a $w'$-natural transformation.
  Then, to define a 2-natural transformation $\Tw{w'}{\alpha,\K}\colon\Tw{w'}{F,\K}\Rightarrow\Tw{w'}{G,\K}$,
  We need to show the following commutes.
  \[
    \begin{tikzpicture}[
        nat trans/.style={1cell,-implies,double,double equal sign distance}
      ]
      \node(ul) at (0,0) {$KF$};
      \node(ur) at (2,0) {$KG$};
      \node(dl) at (0,-1.2) {$HF$};
      \node(dr) at (2,-1.2) {$HG$};
      \draw[nat trans] (ul) to node{$K\alpha$} (ur);
      \draw[nat trans] (dl) to node {$H\alpha$} (dr);
      \draw[nat trans,swap] (ul) to node {$\beta F$} (dl);
      \draw[nat trans] (ur) to node{$\beta G$} (dr);
    \end{tikzpicture}
  \]
  There is a comparison modification $\beta_{\alpha_{(-)}} \colon \beta G.K\alpha \Rrightarrow H\alpha.\beta F$,
  but there is no need for $\beta GA.K\alpha_A$ and $H\alpha_A.\beta FA$ to coincide.

  Also, $\ww{w}{w'}{-,\K}$ does not have any canonical extension to an endo-2-functor on $\TwoCatTI$ even for the case $w=w'=s$.
  To see this, let $F,G\colon \A\rightarrow\B$ be 2-functors.
  Then, if we assume an icon $F^*\Rightarrow G^* \colon \ww{w}{w'}{\B,\K}\rightarrow \ww{w}{w'}{\A,\K}$ exists,
  every $w$-functor $H\colon \B\rightsquigarrow \K$ satisfies $HF = HG$,
  which is unnecessary for an icon $F\Rightarrow G$ to exist.
\end{remark}

Let us denote 
the arrow 2-category as $\Two$ and the free 2-category on a single 2-cell as $\Eye$.
We define 2-functors $\dom,\cod\colon \Tw{w}{\Two,\K}\rightarrow \K$
to as evaluation 2-functor at the domain and codomain of the unique non-identity 1-cell for each.

\begin{lemma}\label{lem:w_nat_is_functor}
  Let $\A$ and $\K$ be 2-categories.
  Then, a $w$-functor $H\colon\A\rightsquigarrow \Tw{\overline{w}'}{\Two,\K}$
  is precisely a $w'$-natural transformation
  between $w$-functors $F\rightsquigarrow G$ where
  $F = \dom H$ and $G = \cod H$.

  Similarly, a $w$-functor $\A\rightsquigarrow\Tw{\overline{w}'}{\Eye,\K}$ is precisely a modification between $w'$-natural transformations.
\end{lemma}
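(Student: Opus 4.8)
The plan is to prove the statement by directly unfolding the definition of a $w$-functor into $\Tw{\overline{w}'}{\Two,\K}$ and matching its data and axioms against those of a $w'$-natural transformation. The first, essentially trivial, observation is that since $\dom,\cod\colon\Tw{\overline{w}'}{\Two,\K}\rightarrow\K$ are 2-functors, the composites $F\coloneqq\dom H$ and $G\coloneqq\cod H$ are automatically $w$-functors $\A\rightsquigarrow\K$; this uses only that post-composing a $w$-functor with a 2-functor yields a $w$-functor. So the content of the lemma is that the remaining data of $H$ — what is not seen by $\dom$ and $\cod$ — is exactly a $w'$-natural transformation $\alpha\colon F\Rightarrow G$, and that the $w$-functor axioms for $H$ decompose into the $w$-functor axioms for $F$ and $G$ together with the coherence making $\alpha$ a $w'$-natural transformation.

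First I would fix the presentation $\Two=\{0\xrightarrow{u}1\}$, so that a 2-functor $\Two\rightarrow\K$ is a 1-cell of $\K$, and record how the cells of $\Tw{\overline{w}'}{\Two,\K}$ unfold: an object is a 1-cell of $\K$; a 1-cell (an $\overline{w}'$-natural transformation) is its two endpoint components together with a single comparison 2-cell filling the resulting square; and a 2-cell (a modification) is a pair of endpoint 2-cells satisfying one compatibility equation. Applying this to $H$: the object $\alpha_A\coloneqq HA$ is a 1-cell $FA\rightarrow GA$; for $f\colon A\rightarrow B$ the 1-cell $Hf$ has endpoint components $\dom Hf=Ff$ and $\cod Hf=Gf$ and one comparison 2-cell, which I read off as the naturality 2-cell $\alpha_f$; and for a 2-cell $\theta$ the modification $H\theta$ has endpoint components $F\theta,G\theta$ and its single compatibility equation is precisely the axiom relating $\alpha_f,\alpha_g$ to $\theta$.

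The key point, and the one to state carefully, is the \emph{direction} of the comparison 2-cell, which explains the bar in $\overline{w}'$. For definiteness take $w'=l$, so $\overline{w}'=c$ and the 1-cells of $\Tw{c}{\Two,\K}$ are colax natural transformations; the colax comparison cell of $Hf$, read between the $\Two$-diagrams $\alpha_A$ and $\alpha_B$, points $Gf\cdot\alpha_A\Rightarrow\alpha_B\cdot Ff$, which is exactly the direction of the naturality 2-cell of a \emph{lax} natural transformation $\alpha\colon F\Rightarrow G$. Thus the bar records that transposing the $\Two$-coordinate through the hom exchanges the source/target roles in the comparison cell, turning $\overline{w}'$-naturality in the diagram direction into $w'$-naturality over $\A$, uniformly in $w'\in\{s,p,l,c\}$. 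Having fixed this dictionary, the remaining work is to match axioms, of which there are two kinds. First, each piece of data of $H$ must be a genuine cell of $\Tw{\overline{w}'}{\Two,\K}$: that $Hf$ is an $\overline{w}'$-natural transformation, and that $H\theta$ and the comparison cells $H^2,H^0$ (trivial when $w=s$) are modifications. Unwinding these requirements produces exactly the naturality cells $\alpha_f$ together with the three $w'$-naturality coherence axioms of $\alpha$ — compatibility with 2-cells of $\A$, with composition, and with units — where moreover the endpoint components of $H^2$ and $H^0$ are precisely the comparison cells $F^2,G^2$ and $F^0,G^0$. Second, the $w$-functor laws of $H$ are equations between 2-cells of $\Tw{\overline{w}'}{\Two,\K}$, and since equality of modifications is detected on the two endpoints, these reduce to the corresponding laws for $F$ and for $G$ and impose nothing further.

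I expect the only real difficulty to be bookkeeping: keeping the two layers of duality straight (the bar $\overline{w}'$ and the lax/colax orientation of each comparison cell) across all sixteen cases $w,w'\in\{s,p,l,c\}$. Rather than verify each, I would carry out the identification once for $w=w'=l$ and deduce the rest by transporting along the duality isomorphisms of the earlier proposition. Finally, the claim about $\Eye$ follows from the claim about $\Two$: the two 2-functors $\Two\rightrightarrows\Eye$ selecting the source and target 1-cell of the generating 2-cell induce $\dom,\cod\colon\Tw{\overline{w}'}{\Eye,\K}\rightarrow\Tw{\overline{w}'}{\Two,\K}$, so $\dom H$ and $\cod H$ are $w'$-natural transformations $\alpha,\beta\colon F\Rightarrow G$ by the first part, while the one remaining coordinate of $H$ is a family of 2-cells $\alpha_A\Rightarrow\beta_A$ constrained by a single axiom, i.e.\ a modification $\alpha\Rrightarrow\beta$.
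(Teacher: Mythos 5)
Your proposal is correct and follows essentially the same route as the paper: unpack the data of a $w$-functor into $\Tw{\overline{w}'}{\Two,\K}$ in the representative case $w=w'=l$ and match it, item by item, against the data of two lax functors together with a lax natural transformation between them (the paper's proof is exactly this bullet-point comparison, with the modification statement handled ``similarly''). Your extra touches — the explicit account of why the dual index $\overline{w}'$ appears, the transport to the remaining cases along the duality isomorphisms, and the reduction of the $\Eye$ case to the $\Two$ case via the two restrictions $\Tw{\overline{w}'}{\Eye,\K}\rightarrow\Tw{\overline{w}'}{\Two,\K}$ — are refinements of detail rather than a different argument.
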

\begin{proof}
  We show this for $w=l$ and $w'=l$.
  A lax functor $H\colon\A\rightsquigarrow \TO{\Two,\K}$ consists of the following data.
  \begin{itemize}
    \item Each object $A\in\A$ is mapped to a 1-cell $FA\xrightarrow{\alpha_A}GA$.
    \item Each 1-cell $f\colon A\rightarrow B$ is mapped to a triple $(Ff,Gf,\alpha_f)$.
          \[
            \begin{tikzpicture}
              \node(ul) at (0,0) {$FA$};
              \node(ur) at (2,0) {$FB$};
              \node(dl) at (0,-1.2) {$GA$};
              \node(dr) at (2,-1.2) {$GB$};
              \draw[1cell] (ul) to node{$Ff$} (ur);
              \draw[1cell,swap] (dl) to node {$Gf$} (dr);
              \draw[1cell,swap] (ul) to node {$\alpha_A$} (dl);
              \draw[1cell] (ur) to node{$\alpha_B$} (dr);
              \twocellur{1.1,-0.7}{$\alpha_f$}
            \end{tikzpicture}
          \]
    \item Each 2-cell $\alpha\colon f\Rightarrow g$ is mapped to a pair of 2-cells $F\alpha\colon Ff\Rightarrow Fg$ and $G\alpha\colon Gf\Rightarrow Gg$
          such that are compatible with $\alpha_f$.
    \item For each object $A$, a pair of 2-cells $F^0\colon 1_{FA} \Rightarrow F1_A$ and
          $G^0\colon 1_{GA} \Rightarrow G1_A$.
    \item For each composable pair $f$ and $g$, a pair of 2-cells $F^2\colon Fg.Ff \Rightarrow F(gf)$ and
          $G^2\colon Gg.Gf \Rightarrow G(gf)$.
    \item The unit and associativity law for $F^0$ and $F^2$ holds. The same for $G^0$ and $G^2$.
    \item $\alpha_f$ is compatible with the horizontal composition of 1-cells in $\A$.
    \item $F\alpha$ and $G\alpha$ are compatible with the horizontal and vertical compositions of 2-cells in $\A$.
  \end{itemize}
  These are the same as the data that make $F$ and $G$ lax functors, and $\alpha$ a lax natural transformation.

  Similarly, the correspondence for modifications can be proved by writing down the data.
\end{proof}

For each 2-category $\A$, we denote the underlying category as $\A_0$.
For example, $\TwoCat_0$ is an ordinary category of 2-categories and 2-functors.
When we regard $\M$ as a single-object bicategory,
a monoidal category $\M$ will be denoted by $\Sigma\M$.

\subsection{Computad}
Throughout this paper, we frequently say as ``generate a 2-category from some data, and then take the quotient by some equalities''.
It might be worth being precise with these statements,
so we would like to recall \emph{computads} introduced in \cite{Street_1976_computad}
to justify these ``data,'' ``generate,'' and ``take the quotient''.

\begin{definition}
  A \emph{computad} $\mathscr{G}$ consists of a pair $\left(|\cG|, {\{\cG(A,B)\}}_{A,B\in{|\cG|}_0}\right)$,
  where $|\cG|$ is a quiver, and for each of two objects $A,B$ of $|\cG|$,
  there is another quiver $\cG(A,B)$ whose objects are of paths in $|\cG|$
  which starts with $A$ and ends with $B$.
  We call an edge of $\cG(A,B)$ a 2-cell, which will be described as follows.
  \[
    \begin{tikzpicture}
      \node(a) at (-3,0) {$A$};
      \node(b) at (3,0) {$B$};
      \node at (0,1) {$\cdots$};
      \node at (0,-1) {$\cdots$};
      \draw[1cell] (a) to (-2,0.6);
      \draw[1cell] (-2,0.6) to (-1.2,0.9);
      \draw[1cell] (-1.2,0.9) to (-0.4,1);
      \draw[1cell] (0.4,1) to (1.2,0.9) ;
      \draw[1cell] (1.2,0.9) to (2,0.6);
      \draw[1cell] (2,0.6) to (b);
      \draw[1cell] (a) to (-2,-0.6);
      \draw[1cell] (-2,-0.6) to (-1.2,-0.9);
      \draw[1cell] (-1.2,-0.9) to (-0.4,-1);
      \draw[1cell] (0.4,-1) to (1.2,-0.9) ;
      \draw[1cell] (1.2,-0.9) to (2,-0.6);
      \draw[1cell] (2,-0.6) to (b);
      \twocelld{0,0}{$\alpha$}
    \end{tikzpicture}
  \]

  A homomorphism $T\colon \cG\rightarrow \mathscr{H}$ of computads is a homomorphism $|T|\colon|\cG|\rightarrow|\mathscr{H}|$ of quivers
  together with a family of homomorphisms $T_{AB}\colon\cG(A,B)\rightarrow\mathscr{H}(TA,TB)$
  such that an object $p=(f_0,f_1,\dots,f_n)$ in $\cG(A,B)$ is mapped
  to the object $Tp = (Tf_0,Tf_1,\dots,Tf_n)$ in $\mathscr{H}(TA,TB)$.

  We denote the category of computads by $\cmp$.
\end{definition}

There is an obvious forgetful functor $\mathcal{U}\colon\TwoCat_0 \rightarrow \cmp$.
Conversely, given a computad $\cG$, we can generate a free 2-category $\mathcal{F}\cG$,
whose objects are those in $|\cG|$ and whose 1-cells are the paths in $|\cG|$.
For the detailed construction of 2-cells in $\mathcal{F}\cG$, look for Kelly's original paper \cite{Street_1976_computad}.
This construction $\mathcal{F}$ of free 2-categories gives a left adjoint to the forgetful functor $\mathcal{U}$,
and one can show that $\mathcal{U}\colon\TwoCat_0 \rightarrow \cmp$ is monadic.

From the monadicity, every 2-category can be presented as a coequalizer of free 2-categories generated from computads.
It is also justified to define a 2-category by taking the quotient of a free 2-category
generated from a computad by some equalities of 1-cells and 2-cells.

\begin{example}[$\Mnd$, $\Adj$]
  A \emph{monad} in a 2-category consists of an endo-1-cell $t\colon A\rightarrow A$ and two 2-cells $\eta\colon 1\Rightarrow t$, $\mu\colon tt\Rightarrow t$ satisfying
  $\mu*\mu t = \mu*t\mu$, $\mu*t\eta = 1$, and $\mu*\eta t = 1$.
  Since a monad is defined only with equations,
  there is a 2-category $\Mnd$ such that a monad in $\K$ corresponds to the
  image of a 2-functor $\Mnd\rightarrow\K$,
  where $\Mnd$ is the quotient of the following computad
  \[
    \begin{tikzpicture}[baseline=(s.base)]
      \node(s) at (0,0) {$*$};
      \node(t) at (3,0) {$*$};
      \draw[1cell] (s) to node {$t$} (t);
    \end{tikzpicture}
    \qquad
    \begin{tikzpicture}[baseline=(s.base)]
      \node(s) at (0,0) {$*$};
      \node(t) at (3,0) {$*$};
      \draw[1cell,bend left] (s) to node {$1$} (t);
      \draw[1cell,bend right,swap] (s) to node{$t$} (t);
      \twocelld{1.5,0}{$\eta$}
    \end{tikzpicture}
    \qquad
    \begin{tikzpicture}[baseline=(s.base)]
      \node(s) at (0,0) {$*$};
      \node(t) at (3,0) {$*$};
      \draw[1cell,bend left=15] (s) to node {$t$} (1.5,0.5);
      \draw[1cell,bend left=15] (1.5,0.5) to node {$t$} (t);
      \draw[1cell,bend right,swap] (s) to node{$t$} (t);
      \twocelld{1.5,0}{$\mu$}
    \end{tikzpicture},
  \]
  by the three equations.

  An \emph{adjunction} $f \dashv g$ in a 2-category $\K$ consists of
  1-cells $f\colon A\rightarrow B$, $g\colon B\rightarrow A$,
  and 2-cells $\eta\colon 1_A \Rightarrow gf$, $\varepsilon\colon fg\Rightarrow 1_B$
  satisfying the triangular identity.
  In the same way as monads, there is a 2-category
  $\Adj$ of two objects $*_+$ and $*_-$,
  which is the quotient of the computad
  \[
    \begin{tikzpicture}[baseline=(s.base)]
      \node(s) at (0,0) {$*_+$};
      \node(t) at (3,0) {$*_-$};
      \draw[1cell,transform canvas={yshift =-5pt}] (t) to node {$g$} (s);
      \draw[1cell,transform canvas={yshift = 5pt}] (s) to node {$f$} (t);
    \end{tikzpicture}
    \qquad
    \begin{tikzpicture}[baseline=(s.base)]
      \node(s) at (0,0) {$*_+$};
      \node(t) at (3,0) {$*_+$};
      \draw[1cell,bend right=13,swap] (s) to node {$f$} (1.5,-0.5);
      \draw[1cell,bend right=13,swap] (1.5,-0.5) to node {$g$} (t);
      \draw[1cell,bend left] (s) to node{$1$} (t);
      \twocelld{1.5,0}{$\eta$}
    \end{tikzpicture}
    \qquad
    \begin{tikzpicture}[baseline=(s.base)]
      \node(s) at (0,0) {$*_-$};
      \node(t) at (3,0) {$*_-$};
      \draw[1cell,bend left=13] (s) to node {$g$} (1.5,0.5);
      \draw[1cell,bend left=13] (1.5,0.5) to node {$f$} (t);
      \draw[1cell,bend right,swap] (s) to node{$1$} (t);
      \twocelld{1.5,0}{$\varepsilon$}
    \end{tikzpicture},
  \]
  by the equations precisely making $f$ left adjoint to $g$.
  An adjunction in a 2-category $\K$ corresponds to a 2-functor $\Adj\rightarrow\K$.

  In $\Adj$, all endo-morphisms of $*_{+}$ have a form of $gfgf\cdots gf$,
  and $gf$ constitutes a monad together with $g\varepsilon f$ and $\eta$.
  It is easy to check that $\Mnd$ is a full sub-2-category of $\Adj$ with a unique object $*_+$.
  These 2-categories and the inclusion $\iota\colon\Mnd\hookrightarrow\Adj$ are studied in \cite{Schanuel_1986_the_free_adj}.
\end{example}

\section{Formal monad theory}\label{sec:formal_mnd_theor}
In this section, we review formal monad theory introduced in  \cite{Street_1972_formal_theor_monad},
which formalizes the theory of monads in terms of general 2-category theory.

In \Cref{subsec:adj_mnd_EM}, we review monads and adjunctions in a 2-category.
We also recall Eilenberg-Moore objects and some characterizations of them.
Most statements in \Cref{subsec:adj_mnd_EM} will be proved in a more generalized form with lax algebras of a 2-monad $T$
in \Cref{sec:adj_lax_classifier}.

In \Cref{subsec:dist_law}, we recall Beck's distributive laws of monads \cite{Beck_1969_dist_law}.
We review that distributive laws allow us to compose
two monads, and also that distributive laws can be viewed as liftings of monads to Eilenberg-Moore objects.
Those distributive laws can be generalized to lax functors,
which will be shown in \Cref{sec:Gray_dist}.

\subsection{Adjunction, monad, Eilenberg-Moore object}\label{subsec:adj_mnd_EM}

Let $(t\colon A\rightarrow A,\mu^t,\eta^t)$ and $(s\colon B\rightarrow B,\mu^s,\eta^s)$ be monads in $\K$.
A \emph{lax morphism} of monads $(f,\bar{f})\colon t\rightarrow s$ consists of a 1-cell $f\colon A\rightarrow B$
and a 2-cell $\bar{f}\colon sf\Rightarrow ft$, satisfying the following.
\[
  \begin{tikzpicture}
    \node(ul) at (-2,1) {$ssf$};
    \node(um) at ( 0,1) {$sft$};
    \node(ur) at ( 2,1) {$ftt$};
    \node(dl) at (-2,0) {$sf$};
    \node(dr) at ( 2,0) {$ft$};
    \draw[1cell,swap] (ul) to node {$\mu^s f$} (dl);
    \draw[1cell] (ur) to node {$f\mu^t$} (dr);
    \draw[1cell] (ul) to node {$s\bar{f}$} (um);
    \draw[1cell] (um) to node {$\bar{f}t$} (ur);
    \draw[1cell] (dl) to node {$\bar{f}$} (dr);
  \end{tikzpicture}
  \qquad
  \begin{tikzpicture}
    \node(ul) at (-2,0) {$f$};
    \node(ur) at (-2,-1) {$sf$};
    \node(dr) at (0,-1) {$ft$};
    \node(w) at (0,0) {$f$};
    \draw[1cell,swap] (ul) to node{$\eta^s f$} (ur);
    \draw[1cell] (ur) to node {$\bar{f}$} (dr);
    \draw[1cell] (w) to node {$f\eta^t$} (dr);
    \draw[1cell equality] (ul) to (w);
  \end{tikzpicture}
\]
A \emph{2-cell} between two lax monad morphisms $(f,\bar{f})$ and $(g,\bar{g})$ is a 2-cell $\alpha\colon f\Rightarrow g$ in $\K$
satisfying the following.
\[
  \begin{tikzpicture}
    \node(ul) at (-2,0) {$sf$};
    \node(ur) at (0,0) {$sg$};
    \node(dl) at (-2,-1) {$ft$};
    \node(dr) at (0,-1) {$gt$};
    \draw[1cell] (ul) to node{$s\alpha$} (ur);
    \draw[1cell] (dl) to node{$\alpha t$} (dr);
    \draw[1cell] (ur) to node {$\bar{g}$} (dr);
    \draw[1cell,swap] (ul) to node {$\bar{f}$} (dl);
  \end{tikzpicture}
\]
We write the 2-category of monads and lax monad morphisms as $\mathrm{mnd}_l(K)$.

There is also the dual notion of morphism of monads,
called \emph{colax morphism} $(f,\tilde{f})$,
which has an opposite direction of the 2-cell
$\tilde{f}\colon ft\Rightarrow sf$.
\[
  \begin{tikzpicture}
    \node(ur) at ( 2,1) {$ssf$};
    \node(um) at ( 0,1) {$sft$};
    \node(ul) at (-2,1) {$ftt$};
    \node(dr) at ( 2,0) {$sf$};
    \node(dl) at (-2,0) {$ft$};
    \draw[1cell] (ur) to node {$\mu^s f$} (dr);
    \draw[1cell,swap] (ul) to node {$f\mu^t$} (dl);
    \draw[1cell] (um) to node {$s\bar{f}$} (ur);
    \draw[1cell] (ul) to node {$\bar{f}t$} (um);
    \draw[1cell] (dl) to node {$\bar{f}$} (dr);
  \end{tikzpicture}
  \qquad
  \begin{tikzpicture}
    \node(ul) at (0,0) {$f$};
    \node(ur) at (0,-1) {$sf$};
    \node(dr) at (-2,-1) {$ft$};
    \node(w) at (-2,0) {$f$};
    \draw[1cell] (ul) to node{$\eta^s f$} (ur);
    \draw[1cell] (dr) to node {$\bar{f}$} (ur);
    \draw[1cell,swap] (w) to node {$f\eta^t$} (dr);
    \draw[1cell equality] (ul) to (w);
  \end{tikzpicture}
\]
A 2-cell between colax morphisms is defined similarly to a 2-cell between lax morphisms.
The 2-category of monads with colax monad morphisms is denoted by $\mathrm{mnd}_c(K)$.

We define a category $\Delta_a$ called the \emph{augmented simplex category} as following:
\begin{itemize}
  \item $\ob(\Delta_a) = \N$. We regard $n$ as a set $\{0,1,\dots,n-1\}$.
  \item A morphism $f\colon n\rightarrow m$ is an order preserving map.
\end{itemize}
This $\Delta_a$ has a structure of strict monoidal category defined by $n\otimes m = n + m$ with the monoidal unit $0$.
In $\Delta_a$, all objects are of form $n = 1^{\otimes n}$,
and all morphisms are generated from $0\rightarrow 1$, $2\rightarrow 1$, and $1\rightarrow 1$
by composition $\circ$ and tensor product $\otimes$.
So, a strict monoidal functor $\Delta_a\rightarrow \M$ is only determined by the diagram $I\xrightarrow{e} A \xleftarrow{m} A\otimes A$,
which is the image of $0\rightarrow 1 \leftarrow 2$.
Furthermore, one can show that a diagram
$I\xrightarrow{e} A \xleftarrow{m} A\otimes A$
is an image of a strict monoidal functor $\Delta_a\rightarrow \M$ precisely when $(A,e,m)$ is a monoid.
Therefore, a 2-functor $\Sigma\Delta_a\rightarrow\K$ is a monad in $\K$ and $\Mnd \iso \Sigma\Delta_a$.

We can easily prove the following.

\begin{proposition}\label{prop:caty_of_mnd}
  The 2-category $\mnd_l(\K)$ of monad and lax morphism in $\K$ can be presented in the following two other ways.
  \[
    \mnd_l(\K) \iso \LL{\One, \K} \iso \TL{\Mnd, \K}.
  \]
  And for colax morphisms of monads, we also have
  \[
    \mnd_c(\K) \iso \LO{\One, \K} \iso \TO{\Mnd, \K}.
  \]
\end{proposition}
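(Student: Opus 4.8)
The plan is to establish each of the two isomorphisms separately, and in each case to exhibit the comparison on objects, on 1-cells, and on 2-cells, checking that these bijections respect the two kinds of composition and the identities; composing $\mnd_l(\K)\iso\LL{\One,\K}$ with $\LL{\One,\K}\iso\TL{\Mnd,\K}$ then yields the displayed chain, and the colax line is the exact dual. There is no deep content here --- it is a matter of unwinding the definitions of lax functor, lax natural transformation, and modification, once out of $\One$ and once out of $\Mnd$ --- so the only points needing care are that the \emph{direction} of the naturality 2-cells agrees with the convention fixed above for lax monad morphisms, and that the a priori infinite family of coherence constraints for a transformation out of $\Mnd$ collapses to finitely many equations through the computad presentation.

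For $\mnd_l(\K)\iso\LL{\One,\K}$ I would unwind a lax functor $F\colon\One\rightsquigarrow\K$: it selects an object $A=F(*)$, an endo-1-cell $t=F(1_*)\colon A\to A$, and comparison 2-cells $F^0\colon 1_A\Rightarrow t$ and $F^2\colon tt\Rightarrow t$ whose unit and associativity coherences are precisely the unit and multiplication laws of the monad $(t,\eta=F^0,\mu=F^2)$, as already observed in \Cref{sec:intro}. A lax natural transformation $\alpha\colon F\Rightarrow G$ between the lax functors encoding monads $t$ on $A$ and $s$ on $B$ then supplies a component $f=\alpha_*\colon A\to B$ and a single naturality 2-cell $\bar f=\alpha_{1_*}\colon sf\Rightarrow ft$ in the lax direction, and its composition and unit coherence axioms (taken at $1_*\cdot 1_*=1_*$ and at the object $*$) are exactly the two equations defining a lax monad morphism $(f,\bar f)$. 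Finally a modification $\Gamma\colon\alpha\Rrightarrow\beta$ amounts to its single component $\rho=\Gamma_*\colon f\Rightarrow g$, whose one modification axiom is the square defining a 2-cell of lax monad morphisms. Matching vertical and horizontal composites and identities on both sides upgrades these bijections to an isomorphism of 2-categories.

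For $\LL{\One,\K}\iso\TL{\Mnd,\K}$ I would argue in the same spirit, now invoking the computad presentation of $\Mnd$ recalled above together with $\Mnd\iso\Sigma\Delta_a$. On objects this is the already-noted fact that a 2-functor $\Mnd\to\K$ is a monad in $\K$. For 1-cells, a lax natural transformation $\alpha$ between 2-functors $F,G\colon\Mnd\to\K$ has a component $\alpha_*$ and naturality 2-cells $\alpha_w$ indexed by the 1-cells $w=t^n$ of $\Mnd$; since $F,G$ are strict their comparison 2-cells are identities, so the composition axiom forces each $\alpha_{t^n}$ to be a pasting of copies of $\alpha_t$, and $\alpha$ is determined by $(\alpha_*,\alpha_t)$. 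Imposing naturality with respect to the \emph{generating} 2-cells $\eta$ and $\mu$ of the computad --- naturality with respect to the relations being then automatic --- reproduces exactly the two lax-monad-morphism equations, and modifications likewise reduce to their single component at $*$. Thus $\TL{\Mnd,\K}$ and $\LL{\One,\K}$ unwind to the same 2-category $\mnd_l(\K)$; conceptually this exhibits $\Mnd$ as the lax functor classifier $\overline{\One}$ studied in \Cref{sec:adj_lax_classifier}, which trades the lax comparison data $F^0,F^2$ of a lax functor out of $\One$ for honest 2-cells of a 2-functor out of $\Mnd$.

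The colax statement $\mnd_c(\K)\iso\LO{\One,\K}\iso\TO{\Mnd,\K}$ follows from the identical bookkeeping with every naturality 2-cell reversed: an oplax natural transformation out of $\One$ delivers $\tilde f\colon ft\Rightarrow sf$, which is precisely a colax monad morphism, and the remaining clauses transcribe verbatim; alternatively it can be deduced from the lax case via the duality proposition stated earlier, taken with $\overline{l}=c$. I expect the genuine obstacle to be not any single computation but the reduction used in the second isomorphism --- verifying that a lax (or oplax) transformation and a modification out of $\Mnd$ are truly determined and constrained by the generating cells alone. This is exactly where the monadicity of $\mathcal{U}\colon\TwoCat_0\to\cmp$ and the explicit generators $0\to 1\leftarrow 2$ and $1\to 1$ of $\Delta_a$ carry the argument.
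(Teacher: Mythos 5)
Your proposal is correct, and it is exactly the routine definitional unwinding the paper intends: the paper states this result with ``We can easily prove the following'' and omits the proof, having already set up the needed ingredients (lax functors out of $\One$ are monads, $\Mnd\iso\Sigma\Delta_a$ with its computad presentation, and the explicit axioms for lax/colax monad morphisms). Your care about the direction of the naturality 2-cells and the reduction of the coherence data for transformations out of $\Mnd$ to the generating cells $\eta,\mu$ fills in precisely the details the paper treats as easy, with no gaps.
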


We identify all three; a monad $t$ in $\K$, a lax functor $\One\rightsquigarrow\K$, and a 2-functor $\Mnd\rightarrow\K$.
The following are three examples of lax monad morphisms.

\begin{example}\label{eg:monad-morphism-1_Cat-smashing}
  We define an ordinary monad $T$ on $\Cat$ by sending a category $\mathbb{C}$ to the following pushout in $\Cat$.
  \[
    \begin{tikzpicture}
      \node(ul) at (0,1.5) {$\ob(\mathbb{C})$};
      \node(ur) at (2,1.5) {$\mathbb{C}$};
      \node(dl) at (0,0) {$\one$};
      \node(dr) at (2,0) {$T\mathbb{C}$};
      \draw[1cell] (ul) to node{$\iota$}(ur);
      \draw[1cell] (dl) to (dr);
      \draw[1cell,swap] (ul) to node{$!$}(dl);
      \draw[1cell] (ur) to (dr);
      \pushout{1.5,0.5}
    \end{tikzpicture}
  \]
  Here, $\iota$ is the identity-on-objects inclusion functor,
  and $T\mathbb{C}$ is a single-object category
  whose morphisms are equivalence classes of sequences of morphisms in $\mathbb{C}$.

  Let $S$ be the free monoid monad on $\Set$
  and $\mathrm{Mor}\colon \Cat\rightarrow \Set$ be the functor sending a category $\mathbb{C}$ to its set of morphisms.
  Then, there is a natural transformation $\alpha\colon S\circ\mathrm{Mor}\Rightarrow \mathrm{Mor}\circ T$ whose component
  $\alpha_\mathbb{C}\colon S(\mathrm{Mor}(\mathbb{C})) \rightarrow \mathrm{Mor}(T\mathbb{C})$
  is a function that sends a sequence of morphisms to its equivalence class,
  and $(\mathrm{Mor}, \alpha)$ defines a lax monad morphism in $\CAT$.
\end{example}
\begin{example}\label{eg:monad-morphism-2_ring}
  Let $T$ be the free Abelian group monad
  and $S$ be the free monoid monad on $\Set$.
  Then, there is a natural transformation $\alpha\colon ST\Rightarrow TS$
  whose component $\alpha_X\colon S(T(X)) \Rightarrow T(S(X))$ is a function that maps
  a sequence $(\sum_{i = 1}^{n_1}a_{1i}x_{1i},\dots,\sum_{i=1}^{n_m}a_{mi}x_{mi})$ of formal sums in $X$ to its multiplication
  $\sum_{(i_1,\dots,i_m)=(1,\dots,1)}^{(n_1,\dots,n_m)} a_{1i_i}\cdots a_{mi_m} (x_{1i_1},\dots,x_{mi_m})$.
  This $(T,\alpha)$ defines a lax monad morphism from $S$ to $S$ in $\Cat$.
\end{example}
\begin{example}\label{eg:monad-morphism-3_Ab-CMon-in-MonCat}
  Let $\Id_\Ab$ be the identity monad on $\Ab$ and $S$ be the free commutative monoid monad on $\Set$.
  These monads have canonical lax monoidal structures, so these are monads in $\MonCat$.
  Then, there is a monoidal natural transformation $\alpha\colon SU\Rightarrow U\Id_\Ab$,
  where $U$ is the forgetful functor $\Ab\Rightarrow\Set$,
  and the function $\alpha_G\colon S(UG) \rightarrow UG$ is defined by the multiplication of finite elements in $G$.
  This $(U,\alpha)$ is a lax monad morphism in $\MonCat$.
\end{example}

In ordinary category theory, if there is an adjunction $F\dashv G$,
we always have a monad $GF$.
Conversely, if there is a monad $T$ on a category $\mathbb{A}$,
there are both \emph{Eilenberg-Moore category} $\mathbb{A}^T$
and \emph{Kleisli category} $\mathbb{A}_T$ to form adjunctions between $\mathbb{A}$.

Just as in ordinary category theory, we can also have a monad from an adjunction
in arbitral 2-category $\K$.

\begin{proposition}
  For each adjunction $(f,g,\eta,\varepsilon)$, there is an induced monad $(gf,g\varepsilon f, \eta)$.
\end{proposition}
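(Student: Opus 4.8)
The plan is to verify directly the three monad axioms from the presentation of $\Mnd$ recalled above, namely $\mu\cdot\mu t=\mu\cdot t\mu$, $\mu\cdot t\eta=1$, and $\mu\cdot\eta t=1$, for the candidate data $t=gf$, $\mu=g\varepsilon f$, and unit $\eta$. Here I write vertical composition of $2$-cells as $\cdot$, horizontal composition as $\ast$, and ordinary whiskering by juxtaposition; I use freely that whiskering by a fixed $1$-cell is functorial in $2$-cells, so that $g$ and $f$ may be pulled outside a vertical composite. The only inputs are the two triangle identities $\varepsilon f\cdot f\eta=1_f$ and $g\varepsilon\cdot\eta g=1_g$ together with the middle-four interchange law.

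For the two unit laws I would argue as follows. Since $t\eta=g(f\eta)$ and $\mu=g(\varepsilon f)$, functoriality of $g$-whiskering gives $\mu\cdot t\eta=g\bigl(\varepsilon f\cdot f\eta\bigr)=g(1_f)=1_{gf}$ by the first triangle identity. Dually, $\eta t=(\eta g)f$ and $\mu=(g\varepsilon)f$, so $\mu\cdot\eta t=\bigl(g\varepsilon\cdot\eta g\bigr)f=(1_g)f=1_{gf}$ by the second triangle identity. Thus each unit law is a single triangle identity transported by a whiskering, with no interchange required.

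The associativity law is the only step with any content, and it comes from interchange rather than from a triangle identity. Applying the interchange law to $\varepsilon$ against itself yields $\varepsilon\cdot\varepsilon(fg)=\varepsilon\cdot(fg)\varepsilon$ as $2$-cells $fgfg\Rightarrow 1_B$ (both express the horizontal composite $\varepsilon\ast\varepsilon$). Whiskering this single equation on the left by $g$ and on the right by $f$ produces exactly $g\varepsilon f\cdot g\varepsilon fgf=g\varepsilon f\cdot gfg\varepsilon f$, which is precisely $\mu\cdot\mu t=\mu\cdot t\mu$. This completes the verification.

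I expect no genuine obstacle: the argument is bookkeeping, and the one place to be careful is matching whiskering conventions so that the string $g\varepsilon fgf$ is recognized simultaneously as $g\varepsilon f$ whiskered by $gf$ (giving $\mu t$) and as the image of $\varepsilon(fg)$ under whiskering by $g$ and $f$ (coming from the interchange identity). Alternatively, one can bypass all computation via the formal route already available: an adjunction in $\K$ is a $2$-functor $\Adj\to\K$, a monad is a $2$-functor $\Mnd\to\K$, and precomposition with the inclusion $\iota\colon\Mnd\hookrightarrow\Adj$ carries the former to the latter. Since in $\Adj$ itself the $1$-cell $gf$ together with $\eta$ and $g\varepsilon f$ already constitutes a monad, as noted in the $\Adj$ example, the composite $2$-functor picks out exactly $(gf,g\varepsilon f,\eta)$ in $\K$; this simply packages the three checks above into the assertion that $\iota$ exists.
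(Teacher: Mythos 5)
Your proof is correct. Note that the paper states this proposition without any proof at all, treating it as a standard fact, so there is nothing to conflict with; your direct verification fills in exactly the expected computation. The two unit laws are indeed single triangle identities transported by whiskering (using $t\eta = g(f\eta)$, $\mu = g(\varepsilon f)$ for the first, and $\eta t = (\eta g)f$, $\mu = (g\varepsilon)f$ for the second), and your associativity argument is sound: the interchange law gives $\varepsilon \cdot \varepsilon(fg) = \varepsilon\cdot(fg)\varepsilon = \varepsilon * \varepsilon$, and whiskering by $g$ on the left and $f$ on the right turns this into $\mu \cdot \mu t = \mu \cdot t\mu$, since $g\varepsilon fgf = \mu t$ and $gfg\varepsilon f = t\mu$. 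Your alternative ``formal'' route is also worth highlighting, because it is the one the paper itself has already set up: in the example introducing the computads $\Mnd$ and $\Adj$, the paper observes that $gf$ together with $\eta$ and $g\varepsilon f$ constitutes a monad in $\Adj$ and that $\Mnd$ embeds as the full sub-2-category on $*_+$, so precomposing a 2-functor $\Adj\to\K$ (an adjunction) with $\iota\colon\Mnd\hookrightarrow\Adj$ (a monad) yields the proposition immediately. That route buys functoriality and reusability within the paper's formal-monad-theory framework, while your hands-on computation is self-contained and makes clear which axiom of the adjunction is responsible for which axiom of the monad.
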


The converse construction does not always work in a general 2-category.
That is because we do not know how to construct ``the Eilenberg-Moore category'' or
``the Kleisli category,'' which should be an object in $\K$.
However, we can still define Eilenberg-Moore and Kleisli objects
from their universality, which might not always exist \cite{Street_1972_formal_theor_monad}.

\begin{definition}
  An \emph{Eilenberg-Moore object} $A^t$ for a formal monad $t$ on $A$ in a 2-category $\K$ is
  the representing object of the functor sending $X$ to the Eilenberg-Moore category of
  the monad $\K(X,t)$ on $\K(X,A)$.
  That is, an object $A^t$ in $\K$ with a natural isomorphism $\K\left(X,A^t\right) \iso {\K(X,A)}^{\K(X,t)}$.

  A \emph{Kleisli} object is an Eilenberg-Moore object in $\K^\op$.
\end{definition}

An Eilenberg-Moore object in $\Cat$ is an Eilenberg-Moore category.
Hence, in $\Cat$, the category $\Ab$ is the Eilenberg-Moore object of the free Abelian group monad on $\Set$.
This $\Ab$ is also the Eilenberg-Moore object in the 2-category $\MonCat$
of monoidal categories, lax monoidal functors, and monoidal natural transformations,
where $\MonCat$ is known not always to have Eilenberg-Moore objects.

We call an object in ${\K(X,A)}^{\K(X,t)}$ a \emph{module} that is a pair $(a\colon X\rightarrow A, \alpha\colon ta\Rightarrow a)$,
satisfying the following equality.
The Eilenberg-Moore object can also be presented as a terminal module $(u^t \colon A^t \rightarrow A, \upsilon\colon tu^t\rightarrow t)$ with one- and two-dimensional universality.

\[
  \begin{tikzpicture}[baseline={(0,1)}]
    \node (x) at (-2,2) {$X$};
    \node (u) at (0,2) {$A$};
    \node (d) at (0,0) {$A$};
    \draw[1cell] (x) to node{$a$} (u);
    \draw[1cell,swap] (x) to node{$a$} (d);
    \draw[1cell,bend right=20,swap] (u) to node{$t$} (d);
    \draw[1cell,bend left=20] (u) to node{$1_A$} (d);
    \twocelld{-0.8,1.4}{$\alpha$};
    \twocelll{0,1}{$\eta$};
  \end{tikzpicture}
  =
  \begin{tikzpicture}[baseline={(0,1)}]
    \node (x) at (-2,2) {$X$};
    \node (u) at (0,2) {$A$};
    \node (d) at (0,0) {$A$};
    \draw[1cell] (x) to node{$a$} (u);
    \draw[1cell,swap] (x) to node{$a$} (d);
    \draw[1cell,bend left=20] (u) to node{$1_A$} (d);
    \draw[1cell equality] ($(-0.6,1.3)+(0,0.1)$) to ($(-0.6,1.3)+(0,-0.1)$);
  \end{tikzpicture}
  ,
  \begin{tikzpicture}[baseline={(0,0)}]
    \node (x) at (-2,1) {$X$};
    \node (u) at (0,1) {$A$};
    \node (m) at (0.5,0) {$A$};
    \node (d) at (0,-1) {$A$};
    \draw[1cell] (x) to node{$a$} (u);
    \draw[1cell,swap] (x) to node{$a$} (m);
    \draw[1cell,swap] (x) to node{$a$} (d);
    \draw[1cell,bend left=15] (u) to node{$t$} (m);
    \draw[1cell,bend left=15] (m) to node{$t$} (d);
    \twocelld{-0.5,0.65}{$\alpha$};
    \twocelld{-0.5,-0.1}{$\alpha$};
  \end{tikzpicture}
  =
  \begin{tikzpicture}[baseline={(0,0)}]
    \node (x) at (-2,1) {$X$};
    \node (u) at (0,1) {$A$};
    \node (m) at (0.5,0) {$A$};
    \node (d) at (0,-1) {$A$};
    \draw[1cell] (x) to node{$a$} (u);
    \draw[1cell,swap] (x) to node{$a$} (d);
    \draw[1cell,bend left=15] (u) to node{$t$} (m);
    \draw[1cell,bend left=15] (m) to node{$t$} (d);
    \draw[1cell,bend right=27,swap] (u) to node{$t$} (d);
    \twocelld{-1,0.6}{$\alpha$};
    \twocelll{0,0}{$\mu$};
  \end{tikzpicture}.
\]

If $(t,\mu,\eta)$ is a monad, $(t\colon A\rightarrow A, \mu\colon tt\Rightarrow t)$ defines a module.
So, if an EM-object $A^t$ exists in $\K$, we obtain a 1-cell $f^t\colon A\rightarrow A^t$ from the universality.
And it also turns out that $f^t$ and $u^t$ are adjunct,
where the unit $\eta$ and counit $\varepsilon$ satisfy
$t = u^t.f^t$, $\upsilon = u^t.\varepsilon$, and $\mu = \upsilon.f^t$.
As in the ordinary category theory,
this adjunction $f^t\dashv u^t$ is also the terminal one
whose associated monad is $(t,\mu,\eta)$.
That is because if there is an adjunction $l\dashv r\colon B\rightarrow A$ in $\K$ that associates $t$,
there is a unique \emph{Eilenberg-Moore comparison map} $k\colon B\rightarrow A^t$ satisfying $r = u^tk$ and $kl = f^t$
since $r\colon B\rightarrow A$ and $r\varepsilon \colon tr = rlr\Rightarrow r$ define a module of $t$.

The following theorem also gives other characterizations of Eilenberg-Moore objects.

\begin{theorem}\label{thm:EM_characterization}
  Let $t$ be a monad in a 2-category $\K$.
  All the following characterize the Eilenberg-Moore object of $t$.
  \begin{enumerate}
    \item The conical lax limit of the lax functor $t\colon \One \rightsquigarrow \K$.
    \item The conical lax limit of the 2-functor $t\colon \Mnd\rightarrow \K$.
    \item \cite{Schanuel_1986_the_free_adj} The $\Cat$-enriched pointwise right Kan extension of $t\colon\Mnd\rightarrow \K$ along $\iota\colon\Mnd\rightarrow\Adj$.
          \[
            \begin{tikzpicture}
              \node(m) at (0,0) {$\Mnd$};
              \node(a) at (0,-1.3) {$\Adj$};
              \node(k) at (2,0) {$\K$};
              \draw[1cell,right hook->] (m) to node[swap]{$\iota$} (a);
              \draw[1cell] (m) to node{$t$} (k);
              \draw[1cell] (a) to node[swap]{$f^t\dashv u^t\colon A^t\rightarrow A$} (k);
              \twocellu{0.65,-0.45}{}
            \end{tikzpicture}
          \]
  \end{enumerate}
\end{theorem}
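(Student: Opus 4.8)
The plan is to verify that each of the three descriptions represents the \emph{same} contravariant 2-functor $X\mapsto\K(X,A)^{\K(X,t)}$, which is by definition the functor whose representing object is the Eilenberg-Moore object $A^t$. It therefore suffices, in each case, to exhibit a natural isomorphism between the candidate object's representable and the 2-functor of modules $(a\colon X\to A,\alpha\colon ta\Rightarrow a)$; since a module is exactly an object of $\K(X,A)^{\K(X,t)}$ and the module axioms are those displayed before the theorem, this pins the candidate down to $A^t$ together with its one- and two-dimensional universality.

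For (i) and (ii) I would compute lax cones directly. A conical lax limit of $F\colon\mathcal J\rightsquigarrow\K$ is the object representing $X\mapsto\LL{\mathcal J,\K}(\Delta X,F)$ with $\Delta X$ the constant functor at $X$, so I unfold a lax natural transformation $\sigma\colon\Delta X\Rightarrow t$ in both cases. For $\mathcal J=\One$, $\sigma$ has a single 1-cell component $a=\sigma_\ast\colon X\to A$ and a 2-cell $\sigma_{1_\ast}\colon ta\Rightarrow a$, and lax-naturality compatibility with the comparison cells $F^0=\eta$ and $F^2=\mu$ reduces precisely to the two module axioms; hence a lax cone is a module and the conical lax limit is $A^t$. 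For $\mathcal J=\Mnd\iso\Sigma\Delta_a$ the identical computation applies, the lax-naturality cell at the generating 1-cell $t$ yielding $\alpha\colon ta\Rightarrow a$ and its values at the powers $t^n$ being forced by coherence. Alternatively I would invoke \Cref{prop:caty_of_mnd}, whose isomorphism $\LL{\One,\K}\iso\TL{\Mnd,\K}$ carries $t\colon\One\rightsquigarrow\K$ to $t\colon\Mnd\to\K$ and the constant $\Delta X$ to the constant $\Delta X$, so the two hom-sets of lax cones coincide and (i), (ii) represent the same functor.

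For (iii) I would use the pointwise formula for the $\Cat$-enriched right Kan extension. Since $\iota\colon\Mnd\hookrightarrow\Adj$ is the inclusion of a full sub-2-category, $\mathrm{Ran}_\iota t$ restricts back to $t$ on $\Mnd$, so its value at $\ast_+$ is $A$, while its value at $\ast_-$ is the weighted limit $\wlim{\Adj(\ast_-,\iota-)}{t}$, whose defining property reads $\K\left(X,\mathrm{Ran}_\iota t(\ast_-)\right)\iso\TT{\Mnd,\Cat}\left(\Adj(\ast_-,\ast_+),\K(X,t-)\right)$, the category of 2-natural transformations from the weight $\Adj(\ast_-,\ast_+)$ into the representable monad-diagram $\K(X,t-)$. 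The images under $\mathrm{Ran}_\iota t$ of the adjunction $f\adj g$ in $\Adj$ produce an adjunction $f^t\adj u^t\colon A^t\to A$ whose induced monad is $t$ (the restriction along $\iota$ being $t$), matching the data before the theorem; it then remains to identify the right-hand side with $\K(X,A)^{\K(X,t)}$.

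This last identification is where I expect the main difficulty to lie. It rests on the combinatorial structure of $\Adj$ from \cite{Schanuel_1986_the_free_adj}: I must show that the hom-category $\Adj(\ast_-,\ast_+)$, regarded as a diagram $\Mnd\to\Cat$ by precomposition, is the \emph{free} such diagram on a single generator whose $\Mnd$-action encodes an algebra structure, so that a 2-natural transformation out of it into $\K(X,t-)$ is precisely an Eilenberg-Moore algebra for the monad $\K(X,t)$ on $\K(X,A)$, naturally in $X$ and in the morphisms of $\K(X,A)$. Concretely $\Adj(\ast_-,\ast_+)$ is one of the simplex-type hom-categories of $\Adj$, and tracing its universal element recovers the free/forgetful adjunction; here I would lean on \cite{Street_1972_two_const_lax_func} for the weighted-limit presentation and confine myself to checking that the weight and its action are the expected ones and that the one- and two-dimensional universal properties transport correctly. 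Once this is done, all three candidates represent $X\mapsto\K(X,A)^{\K(X,t)}$ and hence agree with $A^t$.
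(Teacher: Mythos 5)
Your proposal is correct, but there is little in the paper to compare it against step by step: the paper gives no proof of \Cref{thm:EM_characterization} at all. It is stated as review material, with item (iii) attributed to \cite{Street_1972_two_const_lax_func}, and the preamble of \Cref{sec:formal_mnd_theor} delegates these facts to the generalized machinery of \Cref{sec:adj_lax_classifier}, where the Kleisli-dual statements are proved via codescent objects and morphism classifiers (\Cref{thm:w-barw_classifier_induce_adjunction,thm:coreflective_embedding_thm,eg:T_Fun_has_Q_and_adjoint}) and the Eilenberg--Moore forms are obtained by duality. Your argument is instead a direct verification by representability, and it is the standard one. For (i) and (ii) the unfolding is complete and correct: a lax cone $\Delta X\Rightarrow t$ has one 1-cell component $a$ and one generating 2-cell component, and the lax-naturality coherence axioms (against $F^0=\eta$, $F^2=\mu$ in case (i); against the 2-cells $\eta,\mu$ of $\Mnd$ plus the composition axiom in case (ii)) are exactly the two module axioms, while modifications are module morphisms; your alternative route to (ii) through \Cref{prop:caty_of_mnd} also works, since the isomorphism $\LL{\One,\K}\iso\TL{\Mnd,\K}$ matches the constant functor at $X$ on one side with the constant functor at $X$ on the other. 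For (iii), your reduction is the right one: fully-faithfulness of $\iota$ gives $\mathrm{Ran}_\iota t\circ\iota\iso t$ (so the value at $*_+$ is $A$), the pointwise formula makes the value at $*_-$ the limit of $t$ weighted by $\Adj(*_-,\iota-)$, and you correctly isolate the crux, namely that 2-natural transformations $\Adj(*_-,\iota-)\Rightarrow\K(X,t-)$ are precisely Eilenberg--Moore algebras for $\K(X,t)$; this holds because $\Adj(*_-,\iota-)$ is the classifying (``walking algebra'') weight, its universal element being the algebra $\left(g,\, g\varepsilon\right)$ in $\Adj(*_-,*_+)$, and the verification is exactly the combinatorial content of \cite{Schanuel_1986_the_free_adj}. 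Leaving that step as a citation matches the paper's own level of detail. Two wording corrections you should make: the $\Mnd$-action on $\Adj(*_-,*_+)$ is by \emph{post}composition (the covariant hom $\Adj(*_-,\iota-)$), not precomposition; and ``free diagram on a single generator'' must be read as ``free on a generic algebra,'' not free on an object --- the latter would make maps out of it mere objects of $\K(X,A)$ rather than algebras, which is what your clarifying clause already says.
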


By identifying an object $A$ with the identity monad $1_A$,
there is an inclusion 2-functor $\Id^\K_{(-)}\colon\K\rightarrow\mnd_l(\K)$.
The left adjoint $U$ always exists for this $\Id^\K_{(-)}$,
which is the 2-functor taking out the underlying object of a monad.
Even more interesting is the right adjoint of $\Id^\K_{(-)}$,
which exists if and only if $\K$ admits all Eilenberg-Moore objects.

\begin{theorem}\label{thm:fact_EM-is-right-adjunction}
  If $\K$ has all Eilenberg-Moore objects,
  there are adjunctions

  \[
    \begin{tikzpicture}[baseline={(0,-1)}]
      \node(k) at (-2,0) {$\K$};
      \node(m) at (4,0) {$\mnd_l(\K)\iso\TL{\Mnd,\K}.$};
      \draw[1cell,right hook->] (k) to node[auto=false,fill=white,near end]{$\Id^\K_{(-)}$} (m);
      \draw[1cell,transform canvas={yshift=12pt},swap] (m) to node{$U$} (k);
      \draw[1cell,transform canvas={yshift=-12pt}] (m) to node{$\mathbf{EM}$} (k);
      \node[labelsize] at (0,0.2) {$\bot$};
      \node[labelsize] at (0,-0.2) {$\bot$};
    \end{tikzpicture}
  \]
\end{theorem}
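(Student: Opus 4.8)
The plan is to exhibit both adjunctions by constructing, for each object $X$ of $\K$ and each monad in $\K$, an isomorphism of hom-categories that is $2$-natural, so that what we obtain are genuine $\Cat$-enriched ($2$-)adjunctions. Concretely, I would prove
\[
  \K(Ut, X)\ \iso\ \mnd_l(\K)(t, \Id^\K_X)
  \qquad\text{and}\qquad
  \mnd_l(\K)(\Id^\K_X, s)\ \iso\ \K(X, \mathbf{EM}(s)),
\]
$2$-naturally in all variables. The first isomorphism, giving $U\dashv\Id^\K_{(-)}$, is essentially automatic; the second, giving $\Id^\K_{(-)}\dashv\mathbf{EM}$, is where the hypothesis that $\K$ admits all Eilenberg--Moore objects is used.

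For the left adjunction, fix a monad $t$ on $A$ and an object $X$, and unwind a lax monad morphism $(f,\bar f)\colon t\to \Id^\K_X = 1_X$. Its underlying $1$-cell is an arbitrary $f\colon A\to X$ (note $Ut=A$), and its comparison $2$-cell has type $\bar f\colon f\Rightarrow ft$, since the target monad is $1_X$. The unit axiom for a lax morphism, evaluated at the identity monad, forces $\bar f = f\eta^t$, so $\bar f$ carries no further data, and the multiplication axiom then holds automatically by the monad laws. A $2$-cell of lax morphisms into $1_X$ is likewise just a $2$-cell of underlying $1$-cells, its compatibility condition being automatic. Hence $(f,\bar f)\mapsto f$ is an isomorphism of categories $\mnd_l(\K)(t,\Id^\K_X)\iso\K(A,X)$, visibly $2$-natural, establishing $U\dashv\Id^\K_{(-)}$; the unit of this adjunction is the canonical lax morphism $t\to 1_A$ with identity $1$-cell component.

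For the right adjunction I would dualize the analysis. A lax monad morphism $(f,\bar f)\colon \Id^\K_X = 1_X\to s$ into a monad $s$ on $A$ has underlying $1$-cell $f\colon X\to A$ and comparison $2$-cell $\bar f\colon sf\Rightarrow f$, and its two axioms are precisely the unit and associativity laws making $(f,\bar f)$ a module for the monad $\K(X,s)$ on $\K(X,A)$; moreover $2$-cells of such lax morphisms match module morphisms on the nose. Thus $\mnd_l(\K)(\Id^\K_X, s)\iso\K(X,A)^{\K(X,s)}$, $2$-naturally in $X$. By the defining universal property of the Eilenberg--Moore object, the hypothesis provides an object $A^s=:\mathbf{EM}(s)$ with $\K(X,A^s)\iso\K(X,A)^{\K(X,s)}$, $2$-naturally in $X$; composing the two isomorphisms yields $\mnd_l(\K)(\Id^\K_X,s)\iso\K(X,\mathbf{EM}(s))$. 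Since this representation is natural in $X$ for every monad $s$, parametrised representability upgrades $s\mapsto A^s$ to a $2$-functor $\mathbf{EM}$ right adjoint to $\Id^\K_{(-)}$, whose counit at $s$ is the terminal module $(u^s,\upsilon)$ recalled earlier.

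The main obstacle is not a single hard computation but the bookkeeping needed to make these correspondences genuinely $2$-natural rather than mere bijections of underlying sets: one must check that the identifications of comparison $2$-cells with forced data (left) and with module structures (right) commute with precomposition by $1$-cells of $\K$ and are compatible with the $2$-cells on each side, and that $\mathbf{EM}$ acts correctly on lax morphisms $t\to s$. I would obtain functoriality of $\mathbf{EM}$ for free from the parametrised-representability argument rather than by hand, sidestepping any explicit construction of Eilenberg--Moore comparison maps; the same argument, read backwards, also shows the converse asserted in the surrounding discussion, that existence of the right adjoint forces every monad in $\K$ to possess an Eilenberg--Moore object.
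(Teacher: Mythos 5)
Your proposal is correct, but it takes a genuinely different route from the paper's own treatment of this statement. The paper never proves \Cref{thm:fact_EM-is-right-adjunction} directly: it is recalled as a known fact of formal monad theory, and the justification the paper actually supplies is that it is the identity-2-monad (equivalently $T_\mathbf{Fun}$ with $\A=\One$) instance of the Section~4 machinery, where the right adjoint $\mathbf{EM}$ arises as the lax morphism coclassifier $\R_l$ of \Cref{eg:T_Fun_has_Q_and_adjoint}, whose existence comes from (co)descent objects as in \Cref{thm:w-barw_classifier_induce_adjunction} applied in $\K^\op$. Your argument is instead the classical direct one, essentially Street's: you observe that a lax monad morphism $t\to\Id^\K_X$ has its comparison 2-cell forced to be $f\eta^t$ (with the multiplication axiom then following from $\mu^t\circ\eta^t t=1$ and the 2-cell condition following from interchange), so that $\mnd_l(\K)(t,\Id^\K_X)\iso\K(Ut,X)$; you identify $\mnd_l(\K)(\Id^\K_X,s)$ on the nose with the module category ${\K(X,A)}^{\K(X,s)}$ that defines $A^s$; and you assemble $\mathbf{EM}$ into a 2-functor by $\Cat$-enriched parametrized representability rather than by constructing comparison maps. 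I checked these identifications against the paper's definitions of lax morphisms, their 2-cells, and modules, and they are exact; your closing remark that the argument read backwards gives the ``if and only if'' asserted in the surrounding text is also right. What your approach buys is a self-contained, elementary proof living entirely at the level of $\K$; what the paper's approach buys is uniformity, since the same codescent-object argument simultaneously proves the generalization to lax algebras of an arbitrary 2-monad, of which this theorem is the degenerate case.
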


Using Eilenberg-Moore objects, we can express a lax monad morphism in another way: a lift $\widetilde{f}\colon A^t\rightarrow B^s$ of a 1-cell $f\colon A\rightarrow B$ satisfying $u^s\widetilde{f} = fu^t$.
With this correspondence, $\mnd_l(\K)$ is a full sub-2-category of the arrow category.

\begin{theorem}\label{thm:fact_mnd_l_is_refl_subcaty}
  We denote the full sub-2-category of $\TT{\Two,\K}$
  consisting of right adjoints as $\mathrm{radj}(\K)$.

  Let $\mathrm{radj}(\K)$ be the full sub-2-category of $\TT{\Two,\K}$ whose objects are right adjoints.
  If $\K$ has all Eilenberg-Moore objects,
  sending a monad $t$ to the forgetful 1-cell $A^t \rightarrow A$ presents $\mnd_l(\K)$ as a reflective sub-2-category of $\mathrm{radj}(\K)$.
  \[
    \begin{tikzpicture}
      \node(m) at (-2,0) {$\mnd_l(\K)$};
      \node(r) at (2,0) {$\mathrm{radj}(\K)$};
      \node at (0,0.2) {$\bot$};
      \draw[1cell,right hook->,transform canvas={yshift=-4pt},swap] (m) to node{$EM$} (r);
      \draw[1cell,bend right=18] (r) to node{$ $} (m);
    \end{tikzpicture}
  \]
\end{theorem}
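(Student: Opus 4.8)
The plan is to exhibit the functor $EM$ as a fully faithful inclusion admitting a left $2$-adjoint. That $EM$ is fully faithful is exactly the correspondence recalled just above: sending a lax monad morphism $(f,\bar f)\colon t\to s$ to the commuting square $(\tilde f,f)$ with $u^s\tilde f=fu^t$ identifies $\mnd_l(\K)(t,s)$ with the hom-category $\mathrm{radj}(\K)(u^t,u^s)$, so $EM\colon\mnd_l(\K)\hookrightarrow\mathrm{radj}(\K)$ is a genuine sub-$2$-category inclusion (and it lands in $\mathrm{radj}(\K)$ because $f^t\dashv u^t$). It then remains to produce, for each object $g\colon B\to A$ of $\mathrm{radj}(\K)$, a universal arrow from $g$ to $EM$. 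I would choose a left adjoint $f\dashv g$ and let $t\coloneqq gf$ be the induced monad on $A$, with unit $\eta$ and counit $\varepsilon$. Taking the Eilenberg--Moore comparison map $k\colon B\to A^t$, which satisfies $u^tk=g$ and $kf=f^t$, the pair $\eta_g\coloneqq(k,\id_A)$ is a morphism $g\to u^t=EM(t)$ in $\mathrm{radj}(\K)$ since $u^tk=g$. The claim is that $\eta_g$ is the desired universal arrow, with reflector $g\mapsto gf$.

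Next I would verify the universal property through a chain of natural bijections. Fix a monad $s$ on $C$. A morphism $(\phi_1,\phi_0)\colon g\to u^s$ in $\mathrm{radj}(\K)$ is a pair with $u^s\phi_1=\phi_0 g$; by the universal property of the Eilenberg--Moore object $C^s$ this is the same datum as an $s$-module structure $\sigma\colon s\phi_0 g\Rightarrow\phi_0 g$ on the $1$-cell $\phi_0 g$. On the other side, $\mnd_l(\K)(t,s)$ consists of lax monad morphisms $(\phi_0,\bar h)$ with $\bar h\colon s\phi_0\Rightarrow\phi_0 t$. I would set up the assignment
\[
  \bar h\ \longmapsto\ (\phi_0 g\varepsilon)\circ(\bar h g),
  \qquad
  \sigma\ \longmapsto\ (\sigma f)\circ(s\phi_0\eta),
\]
and check that the two are mutually inverse: by the interchange law the two round-trips collapse to whiskerings of the composites $g\varepsilon\circ\eta g$ and $\varepsilon f\circ f\eta$, each of which is an identity by a triangular identity of $f\dashv g$. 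Unwinding the comparison identities $u^tk=g$ and $kf=f^t$ then shows that the resulting bijection $\mnd_l(\K)(t,s)\cong\mathrm{radj}(\K)(g,u^s)$ is precisely composition with $\eta_g$, which is what a universal arrow requires.

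The main work — and the step I expect to be the real obstacle — is to promote this bijection of \emph{underlying} $2$-cells to an isomorphism of hom-\emph{categories}. Concretely, one must check that $\bar h$ satisfies the two lax-monad-morphism axioms exactly when $\sigma$ satisfies the two module axioms (here the identity $\mu^t=g\varepsilon f$ for the induced monad is used alongside the monad axioms of $s$ and the triangular identities), and that the correspondence is compatible with $2$-cells, i.e.\ carries modifications of squares to $2$-cells of lax monad morphisms. This last point becomes automatic once one observes that the universal property of $C^s$ is $\Cat$-enriched, so it already identifies the two hom-categories and not merely their objects, and that the correspondence is natural in $s$. Granting this, every object of $\mathrm{radj}(\K)$ admits a universal arrow to $EM$, so $EM$ has a left $2$-adjoint and $\mnd_l(\K)$ is reflective in $\mathrm{radj}(\K)$. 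I would finally note that, in contrast with the general coreflective embedding theorem of \Cref{sec:adj_lax_classifier}, no codescent hypothesis enters here: routing the argument through the lift correspondence and the triangular identities avoids having to extend module structures along $k$ from $B$ to $A^t$ directly.
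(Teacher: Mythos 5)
Your proof is correct, and it takes a genuinely different route from the paper --- in fact the paper gives no direct proof of this statement at all: \Cref{thm:fact_mnd_l_is_refl_subcaty} is recalled as a known fact of formal monad theory in the review section, and the paper's own machinery reaches it only indirectly, by proving the generalized coreflective embedding theorem (\Cref{thm:coreflective_embedding_thm}), that $\lalg{c}{T}$ embeds coreflectively in $\lladj{T}$ when $\K$ admits and $T$ preserves the relevant codescent objects, and then noting that its specialization at $T=\Id_\K$ is the dual statement that $\mnd_c(\K)$ is coreflective in $\mathrm{ladj}(\K)$ via Kleisli adjunctions. Your argument is the classical direct one: full faithfulness of $EM$ via the lift correspondence recalled just before the theorem, and a left 2-adjoint produced from enriched universal arrows, where for $g\colon B\to A$ with chosen $f\dashv g$ the unit is the comparison 1-cell $k\colon B\to A^{gf}$ and the hom-isomorphism is the mate correspondence under $f\dashv g$. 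Your two assignments are indeed mutually inverse (interchange plus the two triangle identities), and composition with $(k,\id_A)$ does realize your bijection, because $k$ classifies the module $(g,g\varepsilon)$, so whiskering the universal action of $C^s$ along $k$ turns $\bar h$ into exactly $(\phi_0 g\varepsilon)\circ(\bar h g)$. The one claim that is slightly too optimistic is that compatibility with 2-cells is ``automatic'' from the $\Cat$-enrichment of the universal property of $C^s$: enrichment identifies 2-cells into $C^s$ with module maps, but you must still check that $\Lambda_0 g$ is a module map precisely when $\Lambda_0$ satisfies the 2-cell condition for lax monad morphisms; this is a short interchange computation of the same kind as your round trips, so it is a presentational shortcut rather than a gap. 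As for what each approach buys: yours is elementary, self-contained, uses exactly the stated hypothesis (existence of all Eilenberg--Moore objects, with no preservation or codescent conditions), and would fit naturally where the paper merely cites the fact; the paper's route buys the much more general statement for lax algebras of an arbitrary 2-monad, of which this theorem is (in dual form) the identity-monad instance.
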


\begin{example}
  The lax monad morphisms in \Cref{eg:monad-morphism-1_Cat-smashing,eg:monad-morphism-2_ring,eg:monad-morphism-3_Ab-CMon-in-MonCat}
  correspond to the following liftings of 1-cells (in $\CAT$, $\CAT$, and $\MonCat$ for each).
  \[
    \begin{tikzpicture}
      \node(ul) at (0,1.5) {$\mathbf{Mon}$};
      \node(ur) at (2,1.5) {$\mathbf{Mon}$};
      \node(dl) at (0,0)   {$\Cat$};
      \node(dr) at (2,0)   {$\Set$};
      \draw[1cell equality] (ul) to (ur);
      \draw[1cell,swap] (dl) to node{$\mathrm{Mor}$} (dr);
      \draw[1cell,right hook->] (ul) to (dl);
      \draw[1cell] (ur) to node{$U$} (dr);
    \end{tikzpicture}
    \qquad
    \begin{tikzpicture}
      \node(ul) at (0,1.5) {$\mathbf{Mon}$};
      \node(ur) at (2,1.5) {$\mathbf{Mon}$};
      \node(dl) at (0,0)   {$\Set$};
      \node(dr) at (2,0)   {$\Set$};
      \draw[1cell]      (ul) to node{$\bar{T}$} (ur);
      \draw[1cell,swap] (dl) to node{$T$} (dr);
      \draw[1cell,swap] (ul) to node{$U$}(dl);
      \draw[1cell]      (ur) to node{$U$} (dr);
    \end{tikzpicture}
    \qquad
    \begin{tikzpicture}
      \node(ul) at (0,1.5) {$\Ab$};
      \node(ur) at (2,1.5) {$\mathbf{CMon}$};
      \node(dl) at (0,0)   {$\Ab$};
      \node(dr) at (2,0)   {$\Set$};
      \draw[1cell,right hook->] (ul) to (ur);
      \draw[1cell,swap] (dl) to node{$U$} (dr);
      \draw[1cell equality] (ul) to (dl);
      \draw[1cell] (ur) to node{$U$} (dr);
    \end{tikzpicture}
  \]
  In the second example, $T$ and $\bar{T}$ are also monads, and so defines a distributive law which we will recall in the next section.
  In the third example, the monad on the left side is an identity monad.
  Therefore, the lax monad morphism arose from the pseudo monoidal functor $\Ab\hookrightarrow\mathbf{CMon}$
  by the adjunction in \Cref{thm:fact_EM-is-right-adjunction}.
\end{example}

\subsection{Distributive law}\label{subsec:dist_law}

Given two monads $T$ and $S$ on the same category $\mathbb{C}$,
we cannot always compose them to obtain another monad $TS$
since we do not know how to define their multiplication $TSTS \Rightarrow TS$.
For example, it is known that
the powerset monad $\mathcal{P}$ and the distribution monad $\mathcal{D}$ on $\Set$ has no canonical composition
\cite{VaraWhins_2006_dist_p_d}.

A similar argument is that, given two monadic functors
$U_1\colon\mathbb{A}\rightarrow\mathbb{B}$ and $U_2\colon\mathbb{B}\rightarrow\mathbb{C}$,
the composed functor $U_2U_1$ is not always monadic.
For example, the composition $\Cat\rightarrow\quiv\rightarrow\Set$ of forgetful functors
from $\Cat$ through the category of quivers to $\Set$ is not monadic,
although each step is monadic.

In 1969, Beck defined a necessary and sufficient data called \emph{a distributive law} for a pair of monads $T$ and $S$ to compose
and give a new monad $TS$ compatible with $T$ and $S$ in a certain sense \cite{Beck_1969_dist_law}.
Beck also showed that those data are equivalent to a lift $\overline{T}$ of a monad $T$
to the Eilenberg-Moore category $\mathbb{A}^S$.
In this situation, the composition of the monadic functors
${\left(\mathbb{A}^S\right)}^{\overline{T}} \rightarrow \mathbb{A}^S \rightarrow \mathbb{A}$
is again monadic.
We review these results.

\begin{definition}
  Let $(s,\mu^s,\eta^s)$ and $(t,\mu^t,\eta^t)$ be two monads on the same object $A$.
  Then, a \emph{distributive law} is a 2-cell $\gamma \colon st \Rightarrow ts$ satisfying the axioms below.
  \[
    \begin{tikzpicture}[baseline={(0,0.25)}]
      \node(ul) at (-1, 1) {$A$};
      \node(ur) at ( 1, 1) {$A$};
      \node(dl) at (-1,-0.5) {$A$};
      \node(dr) at ( 1,-0.5) {$A$};
      \draw[1cell,bend left] (ul) to node{$1$} (ur);
      \draw[1cell,swap,bend right] (ul) to node{$t$} (ur);
      \draw[1cell,swap,bend right] (dl) to node{$t$} (dr);
      \draw[1cell,swap] (ul) to node{$s$} (dl);
      \draw[1cell] (ur) to node{$s$} (dr);
      \twocelldl{0,-0.1}{$\gamma$}
      \twocelld{0,1}{$\eta^t$}
    \end{tikzpicture}
    \ =\
    \begin{tikzpicture}[baseline={(0,0.25)}]
      \node(ul) at (-1, 1) {$A$};
      \node(ur) at ( 1, 1) {$A$};
      \node(dl) at (-1,-0.5) {$A$};
      \node(dr) at ( 1,-0.5) {$A$};
      \draw[1cell,bend left] (ul) to node{$1$} (ur);
      \draw[1cell,bend left] (dl) to node{$1$} (dr);
      \draw[1cell,swap,bend right] (dl) to node{$t$} (dr);
      \draw[1cell,swap] (ul) to node{$s$} (dl);
      \draw[1cell] (ur) to node{$s$} (dr);
      \twocelld{0,-0.5}{$\eta^t$}
      \draw[double,double equal sign distance, shorten >=2pt, shorten <=3pt]
      (0,0.6)++(0.15,0.15) to ++(-0.3,-0.3);
    \end{tikzpicture}
    \qquad
    \begin{tikzpicture}[baseline={(0,0)}]
      \node(ul) at (-1, 1) {$A$};
      \node(ur) at ( 1, 1) {$A$};
      \node(dl) at (-1,-1) {$A$};
      \node(dr) at ( 1,-1) {$A$};
      \draw[1cell] (ul) to node{$t$} (ur);
      \draw[1cell,swap] (dl) to node{$t$} (dr);
      \draw[1cell,swap,bend right] (ul) to node{$s$} (dl);
      \draw[1cell,bend left] (ur) to node{$1$} (dr);
      \draw[1cell,swap,bend right] (ur) to node{$s$} (dr);
      \twocelldl{-0.3,0}{$\gamma$}
      \twocelll[swap]{1,0}{$\eta^s$}
    \end{tikzpicture}
    \ =\
    \begin{tikzpicture}[baseline={(0,0)}]
      \node(ul) at (-1, 1) {$A$};
      \node(ur) at ( 1, 1) {$A$};
      \node(dl) at (-1,-1) {$A$};
      \node(dr) at ( 1,-1) {$A$};
      \draw[1cell] (ul) to node{$t$} (ur);
      \draw[1cell,swap] (dl) to node{$t$} (dr);
      \draw[1cell,swap,bend right] (ul) to node{$s$} (dl);
      \draw[1cell,bend left] (ul) to node{$1$} (dl);
      \draw[1cell, bend left] (ur) to node{$1$} (dr);
      \twocelll[swap]{-1,0}{$\eta^s$}
      \draw[double,double equal sign distance, shorten >=2pt, shorten <=3pt]
      (0.4,0)++(0.15,0.15) to ++(-0.3,-0.3);
    \end{tikzpicture}
  \]
  \[
    \begin{tikzpicture}[baseline={(0,0.25)},x=9mm]
      \node(ul) at (-1.5, 1) {$A$};
      \node(ur) at ( 1.5, 1) {$A$};
      \node(um) at ( 0, 1.5) {$A$};
      \node(dl) at (-1.5,-0.5) {$A$};
      \node(dr) at ( 1.5,-0.5) {$A$};
      \draw[1cell,bend left=12] (ul) to node{$t$} (um);
      \draw[1cell,bend left=12] (um) to node{$t$} (ur);
      \draw[1cell,swap,bend right] (ul) to node{$t$} (ur);
      \draw[1cell,swap,bend right] (dl) to node{$t$} (dr);
      \draw[1cell,swap] (ul) to node{$s$} (dl);
      \draw[1cell] (ur) to node{$s$} (dr);
      \twocelld[swap]{0,1}{$\mu^t$}
      \twocelldl[swap]{0,-0.4}{$\gamma$}
    \end{tikzpicture}
    \ =\
    \begin{tikzpicture}[baseline={(0,0.25)},x=9mm]
      \node(ul) at (-1.5, 1) {$A$};
      \node(ur) at ( 1.5, 1) {$A$};
      \node(um) at ( 0, 1.5) {$A$};
      \node(dm) at ( 0,0) {$A$};
      \node(dl) at (-1.5,-0.5) {$A$};
      \node(dr) at ( 1.5,-0.5) {$A$};
      \draw[1cell,bend left=12] (ul) to node{$t$} (um);
      \draw[1cell,bend left=12] (um) to node{$t$} (ur);
      \draw[1cell,bend left=12] (dl) to node{$t$} (dm);
      \draw[1cell,bend left=12] (dm) to node{$t$} (dr);
      \draw[1cell,swap,bend right] (dl) to node{$t$} (dr);
      \draw[1cell,swap] (ul) to node{$s$} (dl);
      \draw[1cell] (ur) to node{$s$} (dr);
      \draw[1cell] (um) to node[auto=false,fill=white]{$s$} (dm);
      \twocelld[swap]{0,-0.5}{$\mu^t$}
      \twocelldl[swap]{-0.7,0.6}{$\gamma$}
      \twocelldl[swap]{ 0.8,0.6}{$\gamma$}
    \end{tikzpicture}
    \qquad
    \begin{tikzpicture}[baseline={(0,0)},x=9mm]
      \node(ul) at (-1, 1.25) {$A$};
      \node(ur) at ( 1, 1.25) {$A$};
      \node(mr) at ( 1.5, 0) {$A$};
      \node(dl) at (-1,-1.25) {$A$};
      \node(dr) at ( 1,-1.25) {$A$};
      \draw[1cell] (ul) to node{$t$} (ur);
      \draw[1cell,swap] (dl) to node{$t$} (dr);
      \draw[1cell,swap,bend right] (ul) to node{$s$} (dl);
      \draw[1cell,bend left=12] (ur) to node{$s$} (mr);
      \draw[1cell,bend left=12] (mr) to node{$s$} (dr);
      \draw[1cell,swap,bend right] (ur) to node{$s$} (dr);
      \twocelldl[swap]{-0.6,0}{$\gamma$}
      \twocelll[swap]{1,0}{$\mu^s$}
    \end{tikzpicture}
    \ =\
    \begin{tikzpicture}[baseline={(0,0)},x=9mm]
      \node(ul) at (-1, 1.25) {$A$};
      \node(ur) at ( 1, 1.25) {$A$};
      \node(ml) at (-0.5, 0) {$A$};
      \node(mr) at ( 1.5, 0) {$A$};
      \node(dl) at (-1,-1.25) {$A$};
      \node(dr) at ( 1,-1.25) {$A$};
      \draw[1cell] (ul) to node{$t$} (ur);
      \draw[1cell,swap] (dl) to node{$t$} (dr);
      \draw[1cell,swap,bend right] (ul) to node{$s$} (dl);
      \draw[1cell,bend left=12] (ul) to node{$s$} (ml);
      \draw[1cell,bend left=12] (ml) to node{$s$} (dl);
      \draw[1cell,bend left=12] (ur) to node{$s$} (mr);
      \draw[1cell,bend left=12] (mr) to node{$s$} (dr);
      \draw[1cell] (ml) to node[auto=false,fill=white]{$t$} (mr);
      \twocelll[swap]{-1,0}{$\mu^s$}
      \twocelldl[swap]{0.5,0.65}{$\gamma$}
      \twocelldl[swap]{0.5,-0.65}{$\gamma$}
    \end{tikzpicture}
  \]
\end{definition}

\begin{theorem}\label{thm:fact_dist_law_characterization}\cite{Beck_1969_dist_law}
  Let $(t,\mu^t, \eta^t)$ and $(s,\mu^s,\eta^s)$ be monads on $A\in\K$,
  and $\gamma\colon st\Rightarrow ts$ be a 2-cell. Then, the followings are equivalent.
  \begin{enumerate}
    \item $\gamma$ is a distributive law $st\Rightarrow ts$.
    \item $(t,\gamma)$ is a lax monad morphism $s\rightarrow s$, and
          $((t,\gamma),\mu^t,\eta^t)$ is a monad on $(s,\mu^s,\eta^s)$ in $\mnd_l(\K)$.
    \item $(s,\gamma)$ is a colax monad morphism $t\rightarrow t$, and
          $((s,\gamma),\mu^s,\eta^s)$ is a monad on $(t,\mu^t,\eta^t)$ in $\mnd_c(\K)$.
    \item The composition $ts$ has a monad structure with a unit $\eta^t.\eta^s$ and multiplication $(\mu^t.\mu^s)*\gamma$.
  \end{enumerate}
\end{theorem}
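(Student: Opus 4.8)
The plan is to read the four conditions as four different packagings of the \emph{same} four equations, so that (i)$\Leftrightarrow$(ii) and (i)$\Leftrightarrow$(iii) become pure unpackings of definitions while the only genuine computation is hidden in (i)$\Leftrightarrow$(iv). First I would translate the four pasting diagrams defining a distributive law into the equations
\begin{gather*}
  \gamma\cdot(s\eta^t)=\eta^t s,\qquad \gamma\cdot(\eta^s t)=t\eta^s,\\
  \gamma\cdot(s\mu^t)=(\mu^t s)\cdot(t\gamma)\cdot(\gamma t),\qquad
  \gamma\cdot(\mu^s t)=(t\mu^s)\cdot(\gamma s)\cdot(s\gamma),
\end{gather*}
which I will call the $\eta^t$-, $\eta^s$-, $\mu^t$-, and $\mu^s$-axioms respectively.

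For (i)$\Leftrightarrow$(ii) I would unpack what a monad on the object $(s,\mu^s,\eta^s)$ of $\mnd_l(\K)$ with underlying endomorphism $(t,\gamma)$ consists of: the lax monad morphism $(t,\gamma)\colon s\to s$ together with 2-cells $\eta^t\colon\id_s\Rightarrow(t,\gamma)$ and $\mu^t\colon(t,\gamma)\circ(t,\gamma)\Rightarrow(t,\gamma)$ of $\mnd_l(\K)$ obeying the monad laws. Since a 2-cell of $\mnd_l(\K)$ is a 2-cell of $\K$ subject to one compatibility square, the monad laws for $(t,\mu^t,\eta^t)$ in $\mnd_l(\K)$ hold exactly when they hold in $\K$, so they contribute nothing beyond ``$(t,\mu^t,\eta^t)$ is a monad in $\K$''. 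The remaining content splits cleanly: the two lax-morphism axioms of $(t,\gamma)$ are the $\eta^s$- and $\mu^s$-axioms; the compatibility square forcing $\eta^t$ to be a 2-cell of $\mnd_l(\K)$ is the $\eta^t$-axiom; and, using that the composite $(t,\gamma)\circ(t,\gamma)$ carries the structure cell $(t\gamma)\cdot(\gamma t)$, the square forcing $\mu^t$ to be a 2-cell is the $\mu^t$-axiom. Matching the four gives the equivalence.

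Condition (iii) is the mirror image. Rerunning the identical unpacking for a colax morphism $(s,\gamma)\colon t\to t$ and a monad structure in $\mnd_c(\K)$, the two colax-morphism axioms of $(s,\gamma)$ now yield the $\eta^t$- and $\mu^t$-axioms, while the squares forcing $\eta^s$ and $\mu^s$ to be 2-cells of $\mnd_c(\K)$ yield the $\eta^s$- and $\mu^s$-axioms. Alternatively one deduces (iii) from (ii) through the co-duality $\K\mapsto\K^{\co}$, which interchanges lax and colax monad morphisms and the roles of the two monads while fixing the set of four axioms. Either route gives (i)$\Leftrightarrow$(iii).

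The equivalence (i)$\Leftrightarrow$(iv) carries the actual calculation. For (i)$\Rightarrow$(iv) I would check the monad laws of $(ts,\ \eta^t.\eta^s,\ (\mu^t.\mu^s)*\gamma)$ by pasting: the two unit laws reduce to the $\eta^t$- and $\eta^s$-axioms together with the unit laws of $s$ and $t$, and associativity reduces to the $\mu^t$- and $\mu^s$-axioms, the interchange law, and associativity of $\mu^s,\mu^t$. The converse (iv)$\Rightarrow$(i) is the step I expect to be the main obstacle, since one must recover each of the four axioms from the bare monad laws of $ts$; the strategy is to whisker the unit and associativity laws with the units $\eta^s$ and $\eta^t$ so as to collapse every structure cell but one, isolating the $\eta^t$-, $\eta^s$-, $\mu^t$-, and $\mu^s$-axioms in turn. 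This is only bookkeeping of whiskerings, but it is where the argument is more than a restatement. More conceptually, one can bypass it by establishing (ii)$\Leftrightarrow$(iv) directly: forming the composite $ts$ is the passage from a monad on $s$ in $\mnd_l(\K)$ to a monad in $\K$, and the prescribed unit and multiplication are exactly the image of the $\mnd_l(\K)$-monad data, so the two descriptions carry the same information --- a viewpoint consonant with the treatment of $\mnd_l(\mnd_l(\K))$ in \Cref{sec:Gray_dist}.
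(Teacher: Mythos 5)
The paper itself gives no proof of this statement: it is labeled as a fact and cited to Beck's 1969 paper, so there is no in-house argument to compare against. Judged on its own terms, your reconstruction is essentially the classical one and its primary routes are sound. The unpacking of (ii) is exactly right: the two lax-morphism axioms of $(t,\gamma)$ are the $\eta^s$- and $\mu^s$-axioms, the condition for $\eta^t$ to be a 2-cell of $\mnd_l(\K)$ from the identity morphism to $(t,\gamma)$ is the $\eta^t$-axiom, the condition for $\mu^t$ (using that the composite $(t,\gamma)\circ(t,\gamma)$ has structure cell $(t\gamma)\cdot(\gamma t)$) is the $\mu^t$-axiom, and the monad laws transfer verbatim because the forgetful 2-functor $\mnd_l(\K)\rightarrow\K$ is locally faithful. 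Your whiskering strategy for (iv)$\Rightarrow$(i) also genuinely works: each unit law of $ts$, after cancelling the units of $s$ and $t$, isolates one $\eta$-axiom, and associativity, precomposed with $(\eta^t s)*(t\eta^s)*(t\eta^s)$ (resp.\ its mirror $(\eta^t s)*(\eta^t s)*(t\eta^s)$) and simplified using the already-recovered $\eta$-axioms, collapses to the $\mu^t$- (resp.\ $\mu^s$-) axiom.

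Two of your side remarks are flawed, though neither carries the proof. First, the duality you invoke for (iii) is the wrong one: $\K\mapsto\K^\co$ reverses 2-cells, hence turns monads into comonads and lax morphisms of monads into colax morphisms of \emph{comonads}; statement (ii) read in $\K^\co$ is a statement about comonads, not statement (iii). The duality that actually exchanges (ii) and (iii) is $\K\mapsto\K^\op$ together with interchanging the roles of $s$ and $t$: monads in $\K^\op$ are monads in $\K$, the 2-cell $\gamma\colon st\Rightarrow ts$ of $\K$ is a 2-cell $t\circ_\op s\Rightarrow s\circ_\op t$ of $\K^\op$, and a lax monad morphism in $\K^\op$ is precisely a colax monad morphism in $\K$. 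Since your direct unpacking of (iii) is correct, this error only costs you the shortcut. Second, the ``conceptual bypass'' of (ii)$\Leftrightarrow$(iv) is circular as stated: the assertion that passing from a monad in $\mnd_l(\K)$ to the data $\left(ts,\ \eta^t.\eta^s,\ (\mu^t.\mu^s)*\gamma\right)$ lands in monads of $\K$ is exactly the implication (ii)$\Rightarrow$(iv) being proved (it is the content of the composition 2-functor $\mathbf{comp}$ appearing later in the paper), and recovering the $\mnd_l(\K)$-monad data from the monad structure on $ts$ is again your whiskering computation; so this viewpoint repackages the calculation rather than replacing it.
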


\begin{corollary}\label{cor:fact_dist_is_lift_of_monad}
  To give a distributive law $st\Rightarrow ts$ is to give a lift of the monad $t$ to $A^s$,
  that is, a monad $(\bar{t}, \bar{\mu}, \bar{\eta})$ on $A^s$ such that
  $u^s\bar{t} = tu^s$,
  $u^s\bar{\mu} = \mu u^s$, and
  $u^s\bar{\eta} = \eta u^s$.
\end{corollary}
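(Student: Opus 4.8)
The plan is to read the corollary as the image of Beck's characterization under the Eilenberg--Moore embedding. First I would invoke \Cref{thm:fact_dist_law_characterization}(ii), which already reinterprets a distributive law $\gamma\colon st\Rightarrow ts$ as a monad on the object $s$ inside the 2-category $\mnd_l(\K)$: the lax monad morphism $(t,\gamma)\colon s\rightarrow s$ is the underlying endo-1-cell, while $\mu^t$ and $\eta^t$, now regarded as 2-cells of $\mnd_l(\K)$, are its multiplication and unit. So it suffices to transport this monad along the fully faithful Eilenberg--Moore embedding $E\colon\mnd_l(\K)\hookrightarrow\TT{\Two,\K}$ of \Cref{thm:fact_mnd_l_is_refl_subcaty}, which sends the monad $s$ to its forgetful 1-cell $u^s\colon A^s\rightarrow A$ and identifies a lax monad morphism $(t,\gamma)\colon s\rightarrow s$ with a lift $\bar{t}\colon A^s\rightarrow A^s$ of $t$ satisfying $u^s\bar{t}=t u^s$.

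Concretely, since $E$ is a 2-functor it carries monads to monads, so the monad $((t,\gamma),\mu^t,\eta^t)$ on $s$ is sent to a monad $(\bar{t},\bar{\mu},\bar{\eta})$ on the object $u^s$ of $\TT{\Two,\K}$. Unwinding what a monad on the object $u^s$ is: its underlying endo-1-cell is a 2-natural transformation $u^s\Rightarrow u^s$, i.e.\ a commuting square, whose codomain leg is forced to be $t$ because $\cod\circ E=U$ sends $(t,\gamma)$ to $t$; this is exactly a lift $\bar{t}$ with $u^s\bar{t}=t u^s$. Its structure 2-cells are cylinders, i.e.\ 2-cells $\bar{\mu}\colon\bar{t}\bar{t}\Rightarrow\bar{t}$ and $\bar{\eta}\colon\id\Rightarrow\bar{t}$ with $u^s\bar{\mu}=\mu^t u^s$ and $u^s\bar{\eta}=\eta^t u^s$, precisely the compatibilities of the statement (here $\mu=\mu^t$, $\eta=\eta^t$). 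The monad axioms for $(\bar{t},\bar{\mu},\bar{\eta})$ follow from those for $((t,\gamma),\mu^t,\eta^t)$ because a 2-functor preserves the relevant pasting equalities. Thus every distributive law produces a lift of $t$ to $A^s$.

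For the converse I would use that $E$ is fully faithful, so the hom-functor $\mnd_l(\K)(s,s)\rightarrow\TT{\Two,\K}(u^s,u^s)$ is an isomorphism onto its image and is strictly monoidal for composition. Hence a monoid structure on $\bar{t}$ lying over the monad $(t,\mu^t,\eta^t)$ under the projection $\cod$ corresponds to a unique monoid structure on $(t,\gamma)$ in $\mnd_l(\K)(s,s)$, that is, to a unique monad on $s$ in $\mnd_l(\K)$ with underlying $\K$-monad $(t,\mu^t,\eta^t)$, which by \Cref{thm:fact_dist_law_characterization}(ii) is a unique distributive law $\gamma$. The main point to handle with care is exactly this monoid-transport step: one must use that the embedding is an isomorphism of 2-categories onto its full image, so that monoid \emph{objects}, and not merely their underlying 1-cells and 2-cells, are in bijection. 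This is the content of \Cref{thm:fact_mnd_l_is_refl_subcaty}, so the only remaining work is the bookkeeping that the square and cylinder equations produced by $E$ coincide with the three displayed equations $u^s\bar{t}=t u^s$, $u^s\bar{\mu}=\mu u^s$, and $u^s\bar{\eta}=\eta u^s$.
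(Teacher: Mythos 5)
Your proof is correct and takes essentially the same route as the paper, whose entire proof reads ``Immediate from \Cref{thm:fact_dist_law_characterization,thm:fact_mnd_l_is_refl_subcaty}.'' Your argument is precisely the unwinding of that one-liner: identify a distributive law with a monad on $s$ in $\mnd_l(\K)$ via characterization (ii), then transport monads in both directions across the fully faithful 2-functorial embedding $\mnd_l(\K)\hookrightarrow\TT{\Two,\K}$ given by the Eilenberg--Moore construction.
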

\begin{proof}
  Immediate from \Cref{thm:fact_dist_law_characterization,thm:fact_mnd_l_is_refl_subcaty}.
\end{proof}

\begin{proposition}
  Let $\gamma\colon st\Rightarrow ts$ be a distributive law.
  Then, from \Cref{cor:fact_dist_is_lift_of_monad}, we have a monad $(\bar{s}, \bar{\mu}, \bar{\eta})$ on $A^s$.
  The composition of the Eilenberg-Moore adjunctions
  \[
    \begin{tikzpicture}[baseline={(0,-0.4)}]
      \node(k) at (-2.2,0) {${(A^s)}^{\bar{t}}$};
      \node(m) at (0,0) {$A^s$};
      \node(l) at (2,0) {$A$};
      \draw[1cell,transform canvas={yshift=6pt},swap] (m) to node{$ $} (k);
      \draw[1cell,transform canvas={yshift=-6pt}] (k) to node{$ $} (m);
      \draw[1cell,transform canvas={yshift=6pt},swap] (l) to node{$ $} (m);
      \draw[1cell,transform canvas={yshift=-6pt}] (m) to node{$ $} (l);
      \node[labelsize] at (-1,0) {$\bot$};
      \node[labelsize] at (1,0) {$\bot$};
    \end{tikzpicture}
  \]
  exhibits ${(A^s)}^{\bar{t}}$ as the Eilenberg-Moore object of the composed monad $ts$ via $\gamma$.
\end{proposition}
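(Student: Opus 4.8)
The plan is to establish the universal property of the Eilenberg–Moore object of $ts$ for ${(A^s)}^{\bar t}$ by reducing everything, representably, to the classical theorem of Beck applied in the hom-categories $\K(X,-)$. Recall that the paper defines $A^t$ through the $\Cat$-natural isomorphism $\K(X,A^t)\iso \K(X,A)^{\K(X,t)}$, so it suffices to produce such a natural isomorphism $\K(X,{(A^s)}^{\bar t})\iso \K(X,A)^{\K(X,ts)}$ and then to identify the induced forgetful $1$-cell with the composite of the two given ones. Write $f^s\dashv u^s$ and $\bar f\dashv\bar u$ for the two Eilenberg–Moore adjunctions appearing in the statement, so that the asserted composite right adjoint is $u^s\bar u\colon{(A^s)}^{\bar t}\to A$.

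First I would unwind the two given Eilenberg–Moore objects representably. Since $A^s$ is the EM object of $s$ and ${(A^s)}^{\bar t}$ is the EM object of $\bar t$, applying the $2$-functor $\K(X,-)$ gives
\[
  \K(X,{(A^s)}^{\bar t})\iso \K(X,A^s)^{\K(X,\bar t)}\iso \bigl(\K(X,A)^{\K(X,s)}\bigr)^{\K(X,\bar t)},
\]
$\Cat$-naturally in $X$. Next I would check that $\K(X,\bar t)$ is a lift of the monad $\K(X,t)$ to $\K(X,A)^{\K(X,s)}$ along the forgetful functor. This is exactly the content of the equations $u^s\bar t=tu^s$, $u^s\bar\mu=\mu u^s$, $u^s\bar\eta=\eta u^s$ of \Cref{cor:fact_dist_is_lift_of_monad}: applying $\K(X,-)$ turns these into $\K(X,u^s)\K(X,\bar t)=\K(X,t)\K(X,u^s)$ and the corresponding equations on structure $2$-cells, and under $\K(X,A^s)\iso\K(X,A)^{\K(X,s)}$ the $1$-cell $\K(X,u^s)$ is precisely the forgetful functor.

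Then I would invoke Beck's classical theorem in the ordinary category $\K(X,A)$. There we have two monads $\K(X,s)$ and $\K(X,t)$, a distributive law $\K(X,\gamma)$, and $\K(X,\bar t)$ is the corresponding lift; the classical result identifies the Eilenberg–Moore category of the lifted monad with that of the composite monad, giving
\[
  \bigl(\K(X,A)^{\K(X,s)}\bigr)^{\K(X,\bar t)}\iso \K(X,A)^{\K(X,ts)},
\]
where the composite monad $\K(X,ts)=\K(X,t)\K(X,s)$ carries unit $\eta^t.\eta^s$ and multiplication $(\mu^t.\mu^s)*\gamma$, matching the monad structure on $ts$ from \Cref{thm:fact_dist_law_characterization} (iv). Composing the displayed isomorphisms yields $\K(X,{(A^s)}^{\bar t})\iso\K(X,A)^{\K(X,ts)}$, $\Cat$-naturally in $X$, so ${(A^s)}^{\bar t}$ is the EM object of $ts$. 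Finally, tracing the forgetful functor $\K(X,A)^{\K(X,ts)}\to\K(X,A)$ back through the chain shows it factors as the two forgetful functors $\K(X,A)^{\K(X,ts)}\to\K(X,A)^{\K(X,s)}\to\K(X,A)$, hence corresponds under representability to $u^s\bar u$; thus the EM adjunction of $ts$ is exactly the asserted composite of the two EM adjunctions.

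I expect the main obstacle to be the bookkeeping of $\Cat$-naturality in $X$: one must verify that every identification above is natural in $X$ and that Beck's isomorphism, which is a priori described objectwise in $\K(X,A)$, assembles into a natural isomorphism of representable $\Cat$-functors. This naturality is precisely where the defining equations of the lift (pushed through $\K(X,-)$) and the $2$-cell axioms of $\gamma$ enter, and it is what upgrades the pointwise classical statement to the claimed formal one. A purely formal alternative would verify the universal property of $A^{ts}$ for the composite adjunction directly, by showing it is terminal among adjunctions inducing $ts$; but the representable reduction to Beck's theorem is cleaner and reuses the classical computation of the composite monad's multiplication.
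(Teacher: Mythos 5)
Your argument is correct, but there is no in-paper proof to compare it against: this proposition sits in the review section \Cref{sec:formal_mnd_theor} and is stated without proof, as a known consequence of Beck's theorem \cite{Beck_1969_dist_law} in the formal setting of \cite{Street_1972_formal_theor_monad}. Your representable reduction is the standard argument for it and it works: both EM objects in the statement are defined by the natural isomorphisms $\K(X,A^s)\iso{\K(X,A)}^{\K(X,s)}$ and $\K\bigl(X,{(A^s)}^{\bar t}\bigr)\iso{\K(X,A^s)}^{\K(X,\bar t)}$, the classical theorem applies in each hom-category, and 2-naturality in $X$ holds because the monads, the distributive law, and the lift all act by whiskering on the other side, so each $\K(h,A)$ strictly commutes with all of this structure. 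The one step you assert rather than verify is that $\K(X,\bar t)$, transported along $\K(X,A^s)\iso{\K(X,A)}^{\K(X,s)}$, is the lift that classical Beck associates to the distributive law $\K(X,\gamma)$; this point matters, since being merely \emph{a} lift of $\K(X,t)$ is not enough (distinct distributive laws correspond to distinct lifts, hence to distinct composite monads, so the wrong lift would compute the EM object of the wrong monad). The verification is short: the lift of \Cref{cor:fact_dist_is_lift_of_monad} is the 1-cell induced by the $s$-module $\bigl(tu^s,\; t\upsilon\circ\gamma u^s\bigr)$, so it is characterized by $u^s\bar t=tu^s$ and $\upsilon\bar t=t\upsilon\circ\gamma u^s$; hence for a module $(a,\alpha)$ corresponding to $k\colon X\to A^s$ one computes $\K(X,\bar t)(a,\alpha)=(ta,\ t\alpha\circ\gamma a)$, which is exactly the classical lift determined by $\K(X,\gamma)$. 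With that line inserted your proof is complete; note also that you silently (and correctly) repair the typo in the statement, which names the lifted monad $(\bar s,\bar\mu,\bar\eta)$ but means $(\bar t,\bar\mu,\bar\eta)$.
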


Since distributive laws were defined with some 1-cells and 2-cells satisfying some equality of 2-cells,
we can define the free 2-category of the distributive law.

\begin{definition}
  The 2-category $\Dist$ is a single object 2-category generated by the computad
  \[
    \begin{tikzpicture}[baseline={(0,0)}]
      \node(s) at (0,0) {$*$};
      \node(t) at (2,0) {$*$};
      \draw[1cell, transform canvas={yshift = 5pt}] (s) to node{$t$} (t);
      \draw[1cell, transform canvas={yshift =-5pt}] (s) to node[swap]{$s$} (t);
    \end{tikzpicture}
    \qquad
    \begin{tikzpicture}[baseline={(0,-0.85)}]
      \node(s) at (0,0) {$*$};
      \node(t) at (2,0) {$*$};
      \draw[1cell,bend left] (s) to node {$1$} (t);
      \draw[1cell,bend right,swap] (s) to node{$t$} (t);
      \twocelld{1,0}{$\eta^t$}
      \node(s1) at (0,-1.7) {$*$};
      \node(t1) at (2,-1.7) {$*$};
      \draw[1cell,bend left=13] (s1) to node {$t$} (1,-1.3);
      \draw[1cell,bend left=13] (1,-1.3) to node {$t$} (t1);
      \draw[1cell,bend right,swap] (s1) to node{$t$} (t1);
      \twocelld{1,-1.7}{$\mu^t$}
    \end{tikzpicture}
    \qquad
    \begin{tikzpicture}[baseline={(0,-0.85)}]
      \node(s) at (0,0) {$*$};
      \node(t) at (2,0) {$*$};
      \draw[1cell,bend left] (s) to node {$1$} (t);
      \draw[1cell,bend right,swap] (s) to node{$s$} (t);
      \twocelld{1,0}{$\eta^s$}
      \node(s1) at (0,-1.7) {$*$};
      \node(t1) at (2,-1.7) {$*$};
      \draw[1cell,bend left=13] (s1) to node {$s$} (1,-1.3);
      \draw[1cell,bend left=13] (1,-1.3) to node {$s$} (t1);
      \draw[1cell,bend right,swap] (s1) to node{$s$} (t1);
      \twocelld{1,-1.7}{$\mu^s$}
    \end{tikzpicture}
    \qquad
    \begin{tikzpicture}[baseline={(0,0)}]
      \node(ul) at (-1, 1) {$*$};
      \node(ur) at ( 1, 1) {$*$};
      \node(dl) at (-1,-1) {$*$};
      \node(dr) at ( 1,-1) {$*$};
      \node at ( 1.2,-1.05) {$,$};
      \draw[1cell] (ul) to node{$t$} (ur);
      \draw[1cell,swap] (dl) to node{$t$} (dr);
      \draw[1cell,swap] (ul) to node{$s$} (dl);
      \draw[1cell] (ur) to node{$s$} (dr);
      \twocelldl{0,0}{$\gamma$}
    \end{tikzpicture}
  \]
  and subject to the equations that make both $(s,\mu^s,\eta^s)$ and $(t,\mu^t,\eta^t)$ monads
  and make $\gamma$ a distributive law $st \Rightarrow ts$.
  A 2-functor $\Dist\rightarrow\K$ is a distributive law in $\K$.
\end{definition}

Now we know that a distributive law is an object of $\mnd_l(\mnd_l(\K))$,
which is also equivalent to the data of a 2-functor $\Dist\rightarrow\K$.
This statement can be strengthened as follows.

\begin{proposition}
  Let $\K$ be a 2-category.
  \begin{enumerate}
    \item $\TL{\Dist, \K} \iso \TL{\Mnd, \TL{\Mnd, \K}}$.
    \item $\TO{\Dist, \K} \iso \TO{\Mnd, \TO{\Mnd, \K}}$.
    \item The composition monad via a distributive law of law defines a 2-functor
          \begin{align*}
            \mathbf{comp_\K}\colon\TL{\Dist,\K} & \quad\rightarrow\quad \TL{\Mnd, \K} \\
            \gamma\colon st \Rightarrow ts      & \quad \mapsto\quad \ ts
          \end{align*}
    \item Let $T$ be the 2-functor $\TL{\Mnd, -}\colon \TwoCat\rightarrow \TwoCat$.
          Then, $\mathbf{comp}\colon T^2\Rightarrow T$ and $\Id^{(-)}\colon \Id_\TwoCat \Rightarrow T$
          make $T$ a 2-monad on $\TwoCat$.
  \end{enumerate}
\end{proposition}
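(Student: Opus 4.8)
The plan is to realize both structure maps as precompositions with $2$-functors between the presenting $2$-categories $\One$, $\Mnd$, $\Dist$, so that the $2$-monad axioms reduce to equalities of $2$-functors that can be checked on generators. First I would observe that, since a monad in $\K$ is a $2$-functor $\Mnd\to\K$ and a distributive law is a $2$-functor $\Dist\to\K$, the composition map of part~(iii) is precomposition along a single $2$-functor $d\colon\Mnd\to\Dist$: namely the one classifying the composite monad $ts$ of the universal distributive law in $\Dist$, which is a monad by \Cref{thm:fact_dist_law_characterization}. Thus, using the isomorphism $T^2\iso\TL{\Dist,-}$ of part~(i), I take $\mathbf{comp}=\TL{d,-}$, which agrees on objects with the construction in part~(iii). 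Likewise $\TL{\One,-}=\Id_\TwoCat$ and the inclusion $\Id^\K_{(-)}$ is precomposition along the terminal $2$-functor $u\colon\Mnd\to\One$, so $\Id^{(-)}=\TL{u,-}$. This repackaging also settles $2$-naturality in $\K$ for free: since each $\TL{\C,-}$ is a $2$-functor and $\TL{-,\K}$ is functorial, precomposition along the fixed $d$ and $u$ assembles into honest $2$-natural transformations, leaving only the monad equations.

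Next I would dispatch the unit laws. Under the contravariant functor $\TL{-,\K}$ the two composites $\mathbf{comp}\circ T\Id^{(-)}$ and $\mathbf{comp}\circ\Id^{(-)}_T$ correspond to the two $2$-functors $\Mnd\xrightarrow{d}\Dist\to\Mnd$ obtained by forcing one of the two monads presented by $\Dist$ to be the identity (and $\gamma$ to be the identity $2$-cell). Since $\Mnd$ is the free $2$-category on a monad, it suffices to evaluate the resulting endo-$2$-functors of $\Mnd$ on the generating monad: setting $s=1$, $\mu^s=\mathrm{id}$, $\gamma=\mathrm{id}$ turns the composite monad $(ts,\ \eta^t\eta^s,\ (\mu^t\mu^s)*\gamma)$ back into $(t,\eta^t,\mu^t)$, and symmetrically for $t=1$. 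Hence both composites are $\TL{\id_\Mnd,-}=\id_T$.

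The main obstacle is associativity, $\mathbf{comp}\circ T\mathbf{comp}=\mathbf{comp}\circ\mathbf{comp}_T$ as transformations $T^3\Rightarrow T$. Iterating the isomorphism of part~(i) in the inner variable identifies $T^3$ with $\TL{\Dist_3,-}$, where $\Dist_3$ is the walking configuration of three monads with pairwise distributive laws subject to the Yang--Baxter compatibility; under $\TL{-,\K}$ the two composites become two $2$-functors $\Mnd\to\Dist_3$ classifying the two bracketings $((t_1t_2)t_3)$ and $(t_1(t_2t_3))$ of the iterated composite monad. By freeness of $\Mnd$ it is enough to show these two monads coincide in $\Dist_3$; their underlying $1$-cell $t_1t_2t_3$ and unit $\eta^{t_1}\eta^{t_2}\eta^{t_3}$ visibly agree, so the real content is that the two multiplications, built from the distributive laws in the two orders, are equal. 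This is exactly the coherence of composing three monads along distributive laws, and I expect to prove it by a finite diagram chase that invokes precisely the hexagon/Yang--Baxter axiom relating the three copies of $\gamma$ in $\Dist_3$, together with the unit and multiplication axioms of \Cref{thm:fact_dist_law_characterization}. Conceptually this entire verification says that $(\Mnd,d,u)$ is a comonoid in $(\TwoCat_0,\otimes_l,\One)$ and that $\TL{-,-}$ carries comonoids to $2$-monads; I would record this viewpoint but only make it fully precise with the lax Gray tensor developed in \Cref{sec:Gray_dist}, where the iteration $\TL{\Mnd,\TL{\Dist,-}}\iso\TL{\Dist_3,-}$ used above is justified.
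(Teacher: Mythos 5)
Your proposal constructs the right structure maps, and it is worth noting that the paper itself supplies no argument at this point: the proposition is stated bare, and its actual justification is the machinery of \Cref{sec:Gray_dist}, where the paper shows $\Dist\iso\overline{\One}\otimes_l\overline{\One}$, equips every lax functor classifier $\barC$ with a canonical comonoid structure in $(\TwoCat_0,\otimes_l,\One)$, and uses the fact that $\TL{-,-}$ is strong monoidal to conclude that $\TL{\barC,-}$ is a 2-monad; part (iv) is the case $\C=\One$. Your $d\colon\Mnd\rightarrow\Dist$ is exactly the paper's comultiplication $\delta$ (the unique 2-functor with $\delta p=\sigma_l\Delta p$), and your $u\colon\Mnd\rightarrow\One$ is its counit, so the two proofs build the same 2-monad. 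Where you differ is in verifying the axioms: you check them by hand, using freeness of $\Mnd$ for the unit laws and a diagram chase in a walking object $\Dist_3$ for associativity (essentially Cheng's iterated distributive laws), whereas the paper gets the axioms for free, because comonoid structures in a Cartesian monoidal category are unique, are preserved by the oplax monoidal left adjoint $\Q$, and strong monoidal functors carry comonoids to monoids. Your route is more elementary and makes the combinatorics explicit; the paper's route eliminates every chase, which is precisely why it postpones the proof until the Gray tensor is available. Your closing remark shows you see this; in effect you have rediscovered the paper's proof as the ``conceptual'' version of your own.

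There are, however, two genuine gaps. First, you never prove parts (i) and (ii), yet part (i) is load-bearing for everything else: it is what licenses defining $\mathbf{comp}$ as $\TL{d,-}$ and what makes your contravariant translation of the unit and associativity laws into statements about 2-functors $\Mnd\rightarrow\Dist\rightarrow\Mnd$ legitimate. Proving (i) requires extending Beck's object-level bijection (\Cref{thm:fact_dist_law_characterization}) to lax natural transformations and modifications --- a routine but nontrivial unpacking --- or else invoking $\TL{\B,\TL{\A,\K}}\iso\TL{\A\otimes_l\B,\K}$ together with $\Dist\iso\overline{\One}\otimes_l\overline{\One}$, i.e.\ \Cref{prop:Gray_tensor_property} and \Cref{prop:lax_fun_classifier}. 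Second, the identification $T^3\iso\TL{\Dist_3,-}$ is not obtained by ``iterating the isomorphism of part (i)'': iteration only gives $\TL{\Mnd,\TL{\Dist,-}}$, and recognizing this as classified by a walking triple of monads with pairwise distributive laws subject to Yang--Baxter is itself a theorem (Cheng's), or an application of the associativity of $\otimes_l$ plus a presentation computation for $\Mnd^{\otimes_l 3}$. You do flag this and defer it to \Cref{sec:Gray_dist}, which is acceptable, but once that machinery is granted the paper's comonoid argument already finishes the proof, making the Yang--Baxter chase redundant. A small simplification if you do carry out the chase: once both bracketed composites are known to be monads, their two multiplications on $t_1t_2t_3$ agree by the interchange law alone, since the two middle swaps act on disjoint substrings of $t_1t_2t_1t_3t_2t_3$; the Yang--Baxter axiom is needed earlier, to show that the composite swaps are distributive laws at all, i.e.\ in identifying the objects of $T^3\K$.
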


Distributive law is defined not only between two monads but also for any combinations: a monad and a comonad, a comonad and a monad, two comonads \cite{PowerWatanabe_2002_combining_mnd_comnd}.
For example, a distributive law of a monad $(t,\mu,\eta)$ over a comonad $(w,\delta,\varepsilon)$ is
a 2-cell $\gamma\colon tw \Rightarrow wt$ compatible with $\mu$, $\eta$, $\delta$, and $\varepsilon$.
A distributive law of this type does not give rise to a ``composition'' of a monad and a comonad,
but we can still state the following.

\begin{proposition}
  Let $\K$ be a 2-category.
  A distributive law of a monad over a comonad in $\K$ is equivalent to a comonad in $\mnd_l(\K) \iso \TL{\Mnd, \K}$.
  And it is also equivalent to a monad in the 2-category of comonads $\TO{\Mnd^\co, \K}$.
\end{proposition}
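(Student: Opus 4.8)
\emph{Overall plan.} I would prove both equivalences by unwinding the relevant structures into explicit data and matching them, axiom by axiom, against the four compatibilities of a mixed distributive law, exactly in the spirit of \Cref{thm:fact_dist_law_characterization}. For the first equivalence I would argue directly; for the second I would reduce to the first by passing to $\K^\co$ and invoking the duality isomorphisms for functor $2$-categories proved at the start of \S\ref{sec:2_caty_pre}.

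\emph{First equivalence.} A comonad in $\mnd_l(\K)$ consists of an object of $\mnd_l(\K)$, namely a monad $(t,\mu^t,\eta^t)$ in $\K$, together with an endo-$1$-cell on it and comonad $2$-cells, and I would peel these off one layer at a time. The endo-$1$-cell is a lax monad endomorphism $(w,\gamma)\colon t\rightarrow t$, that is a $1$-cell $w$ together with $\gamma\colon tw\Rightarrow wt$; its two defining axioms are precisely the compatibilities of $\gamma$ with $\mu^t$ and with $\eta^t$. The comultiplication $\delta\colon w\Rightarrow ww$ and counit $\varepsilon\colon w\Rightarrow 1$ must be $2$-cells of $\mnd_l(\K)$, from $(w,\gamma)$ to the composite endomorphism $\bigl(ww,(w\gamma)\circ(\gamma w)\bigr)$ and to the identity $(1,\id)$ respectively; the $2$-cell condition for lax monad morphisms then reads off as exactly the compatibilities of $\gamma$ with $\delta$ and with $\varepsilon$. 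Finally, since vertical and horizontal composition of $2$-cells in $\mnd_l(\K)$ is computed on underlying $2$-cells of $\K$, the coassociativity and counit laws in $\mnd_l(\K)$ assert nothing more than that $(w,\delta,\varepsilon)$ is a comonad in $\K$. Assembling these four matched axioms shows that such a comonad is the same datum as a distributive law $\gamma\colon tw\Rightarrow wt$ of the monad $t$ over the comonad $w$.

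\emph{Second equivalence.} Rather than repeat the unwinding, I would dualize. Passing to $\K^\co$ reverses every $2$-cell, so the monad $t$ becomes a comonad and the comonad $w$ becomes a monad, while $\gamma\colon tw\Rightarrow wt$ becomes a $2$-cell $wt\Rightarrow tw$ in $\K^\co$; a short check of the four axioms shows that this is exactly a distributive law of the monad $w$ over the comonad $t$ in $\K^\co$. Applying the first equivalence in $\K^\co$ identifies this datum with a comonad in $\mnd_l(\K^\co)$. Now $\mnd_l(\K^\co)\iso\TL{\Mnd,\K^\co}$, and the duality isomorphisms for functor $2$-categories give $\TL{\Mnd,\K^\co}\iso{(\TO{\Mnd^\co,\K})}^\co$. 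Since a comonad in any $\mathcal{M}^\co$ is the same as a monad in $\mathcal{M}$, the composite identifies the original distributive law with a monad in $\TO{\Mnd^\co,\K}$, as claimed.

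\emph{Main obstacle.} The genuine work lies in the bookkeeping of the first equivalence: one must compute the composite lax-morphism structure $\bigl(ww,(w\gamma)\circ(\gamma w)\bigr)$ correctly and then verify that the two $2$-cell conditions, for $\delta$ and for $\varepsilon$ viewed inside $\mnd_l(\K)$, coincide on the nose with the comonad-comultiplication and comonad-counit axioms of the mixed distributive law. For the second equivalence the only subtlety is orientational: I must check that $(-)^\co$ carries each of the four axioms to the correct axiom of the dual distributive law, exchanging the roles of multiplication and comultiplication and of unit and counit, and that the duality isomorphism is applied with the index conventions $\overline{s}=s$ and $\overline{l}=c$, so that $\TL{\Mnd,\K^\co}$ indeed lands in ${(\TO{\Mnd^\co,\K})}^\co$ and not in some neighbouring variant.
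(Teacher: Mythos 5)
Your proof is correct. Note that the paper itself gives no proof of this proposition: it is stated as a known fact, following the remark citing Power and Watanabe on mixed distributive laws, so there is no written argument to compare yours against. What you have supplied is exactly the argument the paper leaves implicit, and it is the mixed analogue of item (ii) of \Cref{thm:fact_dist_law_characterization}: the lax endomorphism $(w,\gamma)$ of $t$ encodes $\gamma\colon tw\Rightarrow wt$ together with its compatibilities with $\mu^t$ and $\eta^t$, the requirement that $\delta$ and $\varepsilon$ be $2$-cells of $\mnd_l(\K)$ from $(w,\gamma)$ to $\bigl(ww,(w\gamma)\circ(\gamma w)\bigr)$ and to $(1,\id)$ encodes the compatibilities with the comonad structure, and the comonad laws descend to $\K$ because the forgetful $2$-functor $\mnd_l(\K)\rightarrow\K$ computes $2$-cell compositions on underlying data. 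Your second step is also sound: the orientation bookkeeping works out so that $(-)^\co$ exchanges the roles of $t$ and $w$, and the duality isomorphism from \Cref{sec:2_caty_pre} with $\overline{s}=s$, $\overline{l}=c$ gives $\mnd_l(\K^\co)\iso\TL{\Mnd,\K^\co}\iso{\left(\TO{\Mnd^\co,\K}\right)}^\co$, whose comonads are precisely monads in $\TO{\Mnd^\co,\K}$. The one thing you could make explicit, since you flag it as the "genuine work," is the two-line computation of the composite lax-morphism $2$-cell $tww\xRightarrow{\gamma w}wtw\xRightarrow{w\gamma}wwt$ and the resulting identity $(w\gamma)\circ(\gamma w)\circ t\delta=\delta t\circ\gamma$, but this is routine and your claim is accurate as stated.
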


\section{Lax doctrinal adjunction and lax morphism classifier}\label{sec:adj_lax_classifier}
In this section, we view a monad as a \emph{lax algebra} of an identity 2-monad on a 2-category
and extend formal monad theory, including relations with adjunctions and Eilenberg-Moore objects,
to the theory of lax algebras of a general 2-monad.

In particular, we introduce \emph{lax doctrinal adjunctions} and show our first main theorem:
The 2-category $\lalg{c}{T}$ of lax algebras and colax morphisms is a coreflective sub-2-category of
the 2-category $\lladj{T}$ of lax doctrinal adjunctions.
As a corollary, we deduce that the 2-category $\mnd_c(\K)$ is a coreflective sub-2-category
of the 2-category $\mathrm{ladj}(\K)$ of left adjoints,
where the coreflection is defined by mapping a monad to its Kleisli adjunction,
which is the dual of \Cref{thm:fact_mnd_l_is_refl_subcaty}.

\subsection{2-monad}\label{subsec:2-monad}
We review some results of 2-monads in this section.
A 2-monad is a monad in $\TwoCat$.
We denote the Eilenberg-Moore category of a 2-monad $(T,e,m)$ by $\alg{s}{T}$.
In addition, there are other 2-categories of weak algebras and weak morphisms.
\begin{table}[H]
  \centering
  \rowcolors{2}{gray!25}{white}
  \begin{tabular}{c!{\color{white}\vline width 1pt}cccc}
    \rowcolor{gray!50}
             &
    \multicolumn{4}{c}{algebra}                                                 \\
    \arrayrulecolor{gray!50} \specialrule{5pt}{0pt}{-6pt}
    \arrayrulecolor{white} \cmidrule[1pt]{2-5}
    \arrayrulecolor{black}
    \rowcolor{gray!50}
    morphism & strict       & pseudo        & lax           & oplax             \\
    strict   & $\alg{s}{T}$ & $\palg{s}{T}$ & $\lalg{s}{T}$ & $\!\calg{s}{T}\!$ \\
    pseudo   & $\alg{p}{T}$ & $\palg{p}{T}$ & $\lalg{p}{T}$ & $\!\calg{p}{T}\!$ \\
    lax      & $\alg{l}{T}$ & $\palg{l}{T}$ & $\lalg{l}{T}$ & $\!\calg{l}{T}\!$ \\
    colax    & $\alg{c}{T}$ & $\palg{c}{T}$ & $\lalg{c}{T}$ & $\!\calg{c}{T}\!$
  \end{tabular}
\end{table}

The following are main examples we are concerned about.
See \cite{BlaKelPow_1989_two_dim_mon_theor} for more examples of 2-monads and their algebras.

\begin{example}\label{eg:T_2-Cat_is_2-monad}
  There is a 2-monad $T_{\TwoCat}$ on the 2-category of $\Cat$-graphs as shown in \cite{Shulman_2012_not_every_ps_alg}.
  The strict and pseudo $T_{\TwoCat}$-algebras are small 2-categories and unbiased bicategories,
  $w$-morphisms are $w$-functors,
  and the 2-cells are icons \cite{Lack_2007_icons}.
  \begin{table}[H]
    \centering
    \begin{tabular}{ll}
      2-monad $\mathbf{T_{\TwoCat}}$ &                     \\\toprule
      strict algebra                 & 2-category          \\
      pseudo algebra                 & unbiased bicategory \\\midrule
      strict morphism                & 2-functor           \\
      pseudo morphism                & pseudo functor      \\
      (co)lax morphism               & (op)lax functor     \\\midrule
      2-cell                         & icon                \\\bottomrule
    \end{tabular}
  \end{table}
  In this example, $\alg{s}{T_{\TwoCat}}$ is $\TwoCatTI$, and $\palg{l}{\TwoCat}$ is 2-equivalent to $\BiCatLI$.

  The free 2-category $\mathcal{F}\mathcal{G}$ generated from a $\Cat$-graph
  $\mathcal{G} = (X, {\{\mathcal{G}(x,y)\in\Cat\}}_{x,y\in X})$ has a set of objects $X$
  and hom categories
  \[
    \mathcal{F}\mathcal{G}(x,y) =
    \sum_{p\colon x\rightarrow y}\ \prod_{(p_i,p_{i+1})\in p}
    \mathcal{G}(p_i,p_{i+1})
  \]
  where the sum is taken over a range of sequences of objects, starting at $x$ and ending at $y$.
\end{example}

\begin{example}\label{eg:T_Fun_is_2-monad}
  Let $\A$ be a small 2-category and $\K$ be a cocomplete 2-category.
  There is a 2-monad $T_{\mathbf{Fun}}$ on $[\ob(\A),\K]$,
  whose 2-category of algebras $\walg{w}{w'}{T_\mathbf{Fun}}$ is the 2-category $\ww{w}{w'}{\A,\K}$ of $w$-functors $\A\rightarrow\K$, $w'$-transformations, and modifications.
  \begin{table}[H]
    \centering
    \begin{tabular}{ll}
      2-monad $\mathbf{T_{\mathbf{Fun}}}$ &                              \\\toprule
      strict algebra                      & 2-functor $\A\rightarrow \K$ \\
      pseudo algebra                      & pseudo functor               \\
      (op)lax algebra                     & (op)lax functor              \\\midrule
      strict morphism                     & 2-/strict nat.\@ trans.      \\
      pseudo morphism                     & pseudo nat.\@ trans.         \\
      (co)lax morphism                    & (op)lax nat.\@ trans.        \\\midrule
      2-cell                              & modification                 \\\bottomrule
    \end{tabular}
  \end{table}

  Let $H\colon \ob(\A)\rightarrow \A$ be the inclusion 2-functor.
  Then, this 2-monad $T_\mathbf{Fun}$ arises from the adjunction $\mathrm{Lan}_H\dashv (-)\circ H$.
  This 2-monad is cocontinuous since colimits in $\TT{\A,\K}$ are calculated pointwise.
\end{example}

\begin{example}
  Let $w,w' \in \{s,p,l,c\}$.
  Since $\Tw{w'}{\A,-}$ is a 2-functor $\TwoCAT\rightarrow \TwoCAT$,
  for each 2-monad $T$ on $\K$, $\Tw{w}{\A,T}$ defines a 2-monad on $\Tw{w}{\A,\K}$.
  That 2-monad satisfies $\walg{w}{\bar{w}'}{\Tw{w}{\A,T}} \iso \Tw{w}{\A,\walg{w}{\bar{w}'}{T}}$.
\end{example}

\subsection{Weak morphism classifier}\label{subsec:classifier}
For each $w,w'\in\{s,p,l,c\}$, if the inclusion $\alg{s}{T}\rightarrow \walg{w}{w'}{T}$ has a left 2-adjunction, we call it a \emph{$w'$-morphism classifier} and denote it by $\Q^w_{w'}\colon \walg{w}{w'}{T}\rightarrow \alg{s}{T}$.
It is known that if $\alg{s}{T}$ has enough colimits, the weak morphism classifier exists.
This fact was first studied in the paper \cite{BlaKelPow_1989_two_dim_mon_theor} for the case of $w=s$,
and other cases, for example, $w = w' = l$, were shown in \cite{Lack_2002_cod_obj_coh}.
Later, Nunes studied biadjoint triangle theorems
in~\cite{nunes2017biadjointtriangles,nunes2018liftingbiadjointslaxalgebras},
and one can see the existence of weak morphism classifiers as consequence of them.
We review Lack's results, in particular, when $w = l$ and ${w'} = \bar{w} = c$.

\begin{definition}
  \emph{Lax coherence data} consist of three objects $X_0$, $X_1$, $X_2$, six 1-cells:
  \[
    \begin{tikzpicture}[baseline={(0,-0.8)}]
      \node(dummy) at (0,0.5) {};
      \node(X1) at (3,0) {$X_0,$};
      \node(X2) at (0,0) {$X_1$};
      \node(X3) at (-3,0) {$X_2$};
      \draw[1cell, transform canvas={yshift=12pt}] (X3) -- node {$r$} (X2);
      \draw[1cell] (X3) -- node {$s$} (X2);
      \draw[1cell, transform canvas={yshift=-12pt}] (X3) -- node {$t$} (X2);
      \draw[1cell, transform canvas={yshift=12pt}] (X2) -- node(l) {$v$} (X1);
      \draw[1cell,swap] (X1) -- node {$i$} (X2);
      \draw[1cell, transform canvas={yshift=-12pt}] (X2) -- node {$w$} (X1);
    \end{tikzpicture}
  \]
  and three equality and two 2-cells:
  \begin{align*}
    vi = 1, \qquad vs = vr, \qquad wr = vt \\
    \chi^2\colon wt \Rightarrow ws, \qquad \chi^0\colon 1\Rightarrow wi .
  \end{align*}
  \emph{Oplax coherence data} has the opposite direction of 2-cells $\chi^2\colon ws\Rightarrow wt$, $\chi^0\colon 1\Rightarrow wi$,
  \emph{pseudo coherence data} has those with isomorphic 2-cells, and strict coherence data has five equalities.

  A \emph{colax codescent object of lax coherence data} is a triple $(C,c\colon X_0\rightarrow C,\gamma\colon cw\Rightarrow cv)$
  satisfying the following equalities with one- and two-dimensional universality.
  \[
    \begin{tikzpicture}[baseline=(l.base)]
      \node(Top) at (0,1.6) {$X_1$};
      \node(LU) at (-1.7,0.8) {$X_2$};
      \node(RU) at (1.7,0.8) {$X_0$};
      \node(MU) at (0,0) {$X_1$};
      \node(Bot) at (0,-1.6) {$X_0$};
      \node(LD) at (-1.7,-0.8) {$X_1$};
      \node(RD) at (1.7,-0.8) {$C$};
      \draw[1cell] (LU) to node {$t$} (Top);
      \draw[1cell,swap] (LU) to node {$s$} (MU);
      \draw[1cell,swap] (LU) to node {$r$} (LD);
      \draw[1cell] (Top) to node {$w$} (RU);
      \draw[1cell,swap] (MU) to node {$w$} (RU);
      \draw[1cell] (MU) to node {$v$} (Bot);
      \draw[1cell,swap] (LD) to node {$v$} (Bot);
      \draw[1cell] (RU) to node(l) {$c$} (RD);
      \draw[1cell,swap] (Bot) to node {$c$} (RD);
      \twocelld{0,0.8}{$\chi^2$}
      \twocelld{0.8,-0.4}{$\gamma$}
    \end{tikzpicture}
    \quad=\quad
    \begin{tikzpicture}[baseline=(l.base)]
      \node(Top) at (0,1.6) {$X_1$};
      \node(LU) at (-1.7,0.8) {$X_2$};
      \node(RU) at (1.7,0.8) {$X_0$};
      \node(Bot) at (0,-1.6) {$X_0$};
      \node(LD) at (-1.7,-0.8) {$X_1$};
      \node(RD) at (1.7,-0.8) {$C$};
      \node(MD) at (0,0) {$X_0$};
      \draw[1cell] (LU) to node {$t$} (Top);
      \draw[1cell,swap] (LU) to node {$r$} (LD);
      \draw[1cell] (Top) to node {$w$} (RU);
      \draw[1cell,swap] (LD) to node {$v$} (Bot);
      \draw[1cell] (LD) to node {$w$} (MD);
      \draw[1cell] (RU) to node(l) {$c$} (RD);
      \draw[1cell,swap] (Bot) to node {$c$} (RD);
      \draw[1cell,swap] (MD) to node {$c$} (RD);
      \draw[1cell,swap] (Top) to node {$v$} (MD);
      \twocelld{0.8,0.4}{$\gamma$}
      \twocelld{0,-0.8}{$\gamma$}
    \end{tikzpicture}
  \]
  \[
    \begin{tikzpicture}[baseline={(0,-0.3)}]
      \node(top) at (0,0) {$X_1$};
      \node(ul) at (-1.7,0.8) {$X_0$};
      \node(ur) at (1.7,0.8) {$X_0$};
      \node(dl) at (-1.7,-0.8) {$X_0$};
      \node(dr) at (1.7,-0.8) {$C$};
      \node(m) at (0,-1.6) {$X_0$};
      \draw[1cell equality] (ul) to (dl);
      \draw[1cell equality,bend left=17] (ul) to (ur);
      \draw[1cell, swap] (ul) to node{$i$} (top);
      \draw[1cell, swap] (top) to node{$w$} (ur);
      \draw[1cell] (ur) to node {$c$} (dr);
      \draw[1cell,swap] (top) to node{$v$} (m);
      \draw[1cell equality] (dl) to (m);
      \draw[1cell,swap] (m) to node {$c$} (dr);
      \twocelld{0,0.65}{$\chi^0$}
      \twocelld{0.8,-0.4}{$\gamma$}
    \end{tikzpicture}
    \quad=\quad
    \begin{tikzpicture}[baseline={(0,-0.3)}]
      \node(ul) at (-1.7,0.8) {$X_0$};
      \node(ur) at (1.7,0.8) {$X_0$};
      \node(dl) at (-1.7,-0.8) {$X_0$};
      \node(dr) at (1.7,-0.8) {$C$};
      \node(m) at (0,-1.6) {$X_0$};
      \draw[1cell equality] (ul) to (dl);
      \draw[1cell equality,bend left=17] (ul) to (ur);
      \draw[1cell] (ur) to node {$c$} (dr);
      \draw[1cell equality] (dl) to (m);
      \draw[1cell,swap] (m) to node {$c$} (dr);
      \draw[1cell,swap,bend left=17] (dl) to node {$c$} (dr);
    \end{tikzpicture}
  \]
  We can also consider any $w'$-codescent object of $w$-coherence data for each $w$ and $w'$, which has the corresponding direction of $\gamma$.
  We call them $(w,w')$-codescent objects.
\end{definition}

In much of the literature, the term codescent object refers to $(s,w)$-codescent objects,
but as the main subject in this section is lax algebras, we need to consider general coherence data.

Now, with a certain weight, $(w,w')$-codescent objects are weighted colimits, and in particular,
they can be generated from coinserter and coequifiers.

\begin{theorem}
  Let $(T,m,e)$ be a 2-monad on $\K$.
  If $\alg{s}{T}$ admits $(w,w')$-codescent objects,
  the weak morphism classifier $\Q^w_{w'}\colon\walg{w}{w'}{T} \rightarrow \alg{s}{T}$ exists.
\end{theorem}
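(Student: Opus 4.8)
\begin{prf}
The plan is to construct, for each $w$-algebra $A$, a suitable piece of $w$-coherence data in $\alg{s}{T}$ whose $(w,w')$-codescent object will serve as $\Q^w_{w'}A$, and then to read off the required adjunction $\Q^w_{w'}\adj J$ (where $J\colon\alg{s}{T}\hookrightarrow\walg{w}{w'}{T}$ is the inclusion) directly from the one- and two-dimensional universal properties of the codescent object. Since $(w,w')$-codescent objects are weighted colimits, built from coinserters and coequifiers, the assignment $A\mapsto\Q^w_{w'}A$ will automatically be functorial and its universal property genuinely 2-categorical, so it suffices to exhibit a 2-natural isomorphism $\alg{s}{T}(\Q^w_{w'}A, B)\iso\walg{w}{w'}{T}(A, JB)$ for strict algebras $B$.

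First I would write down the coherence data as the truncated bar resolution of $A$. Taking $X_0 = (TA, m_A)$, $X_1 = (T^2A, m_{TA})$, and $X_2 = (T^3A, m_{T^2A})$ --- the free strict algebras on $A$, $TA$, $T^2A$ --- I set $v = m_A$, $w = Ta$ as the 1-cells $X_1\rightrightarrows X_0$ and $i = Te_A$ as the 1-cell $X_0\to X_1$, and $r = m_{TA}$, $s = Tm_A$, $t = T^2a$ as the three 1-cells $X_2\to X_1$, where $a\colon TA\to A$ is the action of $A$. Each of these is a strict morphism of free algebras by naturality of $m$, and the three equalities $vi = 1$, $vs = vr$, $wr = vt$ follow from the monad unit law, associativity, and naturality of $m$ respectively. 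The two remaining 2-cells come from applying $T$ to the weak unit and associativity data $\bar a^0, \bar a^2$ of the $w$-algebra $A$: namely $\chi^0 = T\bar a^0\colon 1_{TA}\Rightarrow T(a\cdot e_A) = wi$ and $\chi^2 = T\bar a^2\colon wt = Ta\cdot T^2a\Rightarrow Ta\cdot Tm_A = ws$. Their directions are exactly those required for $w$-coherence data (lax if $A$ is lax, and so on), so this is genuine $w$-coherence data, and I define $\Q^w_{w'}A := C$ to be its $(w,w')$-codescent object, with structure 1-cell $c\colon TA\to C$ and structure 2-cell $\gamma$.

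The heart of the proof is the translation of the codescent universal property through the free--forgetful 2-adjunction $F\adj U$ between $\K$ and $\alg{s}{T}$. By one-dimensional universality, strict morphisms $C\to B$ correspond bijectively to pairs $(g\colon TA\to B,\ \delta\colon gw\Rightarrow gv)$ satisfying the two codescent equations. On the one hand, $g$ transposes to a 1-cell $f = g\cdot e_A\colon A\to B$ in $\K$, with $g = b\cdot Tf$; on the other hand, because $X_1 = T^2A$ is free on $TA$, the 2-cell $\delta$ is determined by its restriction $\delta\cdot e_{TA}$, and a short computation using naturality of $e$ and the unit law $m_A\cdot e_{TA} = 1$ identifies this restriction with a morphism-structure 2-cell between $f\cdot a$ and $b\cdot Tf$. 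Thus $(g,\delta)$ is precisely the data of a $w'$-morphism $A\to B$, and one checks that the two codescent equations --- the one involving $\chi^2$ and two copies of $\gamma$, and the one involving $\chi^0$ --- transcribe exactly into the multiplication and unit coherence axioms of a $w'$-morphism. The two-dimensional universal property matches algebra 2-cells $C\to B$ with the 2-cells of $\walg{w}{w'}{T}$ in the same way, and naturality in $B$ is routine, yielding the desired 2-natural isomorphism.

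The main obstacle is the bookkeeping in this last step: verifying that, with the correct orientation dictated by $w$ (through $\chi^0,\chi^2$) and by $w'$ (through the direction of $\gamma$), the two codescent equations become precisely the two coherence axioms of a $w'$-morphism, uniformly across all combinations of $w,w'\in\{s,p,l,c\}$. Getting the directions and transposes to line up --- in particular confirming that $\delta$ is faithfully recovered from its restriction along $e_{TA}$, and that no extra condition is imposed or lost --- is where the care is needed; the remainder is a formal consequence of the universal property of the weighted colimit.
\end{prf}
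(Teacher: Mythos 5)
Your proposal is correct and takes essentially the same approach as the paper's proof: the same coherence data built from the free strict algebras on $X$, $TX$, $T^2X$ (with $\chi^0,\chi^2$ obtained by applying $T$ to the $w$-algebra's coherence 2-cells), and the same translation of the codescent universal property through the free--forgetful 2-adjunction, identifying codescent cocones under this data with $w'$-morphisms out of the $w$-algebra. The paper likewise only sketches the final verification (doing the case $w=l$, $w'=c$ and citing the literature for details), so your level of detail matches its own.
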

\begin{proof}
  We give a sketch of the proof for the case of $w = l$, $w'=c$.
  A detailed proof can be found in \cite{BlaKelPow_1989_two_dim_mon_theor,Lack_2002_cod_obj_coh}.

  Let $\alX = (X,x,\xi^2,\xi^0)$ be a lax algebra.
  And let us also write the free strict algebra on $T^nX$ $(n\geq1)$ as $T^n\alX = (T^{n}X, T^{n+1}X\xrightarrow{\mu}T^nX)$.
  Then, by the 2-adjunction between $\alg{s}{T}$ and $\K$,
  one can check that to give a colax morphism $\f\colon\alX\rightsquigarrow \alB$ to a strict algebra $\alB$
  is to give a pair $(\f'\colon T\alX\rightarrow\alB, \bar{f}'\colon g.Tx\Rightarrow g.m_X)$
  satisfying the following equality of 2-cells in $\alg{s}{T}$.
  \[
    \begin{tikzpicture}[baseline=(l.base)]
      \node(Top) at (0,1.6) {$T^2\alX$};
      \node(LU) at (-1.7,0.8) {$T^3\alX$};
      \node(RU) at (1.7,0.8) {$T\alX$};
      \node(MU) at (0,0) {$T^2\alX$};
      \node(Bot) at (0,-1.6) {$T\alX$};
      \node(LD) at (-1.7,-0.8) {$T^2\alX$};
      \node(RD) at (1.7,-0.8) {$\alB$};
      \draw[1cell] (LU) to node {$T^2x$} (Top);
      \draw[1cell,swap] (LU) to node {$Tm$} (MU);
      \draw[1cell,swap] (LU) to node {$mT$} (LD);
      \draw[1cell] (Top) to node {$Tx$} (RU);
      \draw[1cell,swap] (MU) to node {$Tx$} (RU);
      \draw[1cell] (MU) to node {$m$} (Bot);
      \draw[1cell,swap] (LD) to node {$m$} (Bot);
      \draw[1cell] (RU) to node(l) {$\f'$} (RD);
      \draw[1cell,swap] (Bot) to node {$\f'$} (RD);
      \twocelld{0,0.8}{$T\xi^2$}
      \twocelld{0.8,-0.4}{$\bar{f}'$}
    \end{tikzpicture}
    \quad=\quad
    \begin{tikzpicture}[baseline=(l.base)]
      \node(Top) at (0,1.6) {$T^2\alX$};
      \node(LU) at (-1.7,0.8) {$T^3\alX$};
      \node(RU) at (1.7,0.8) {$T\alX$};
      \node(Bot) at (0,-1.6) {$T\alX$};
      \node(LD) at (-1.7,-0.8) {$T^2\alX$};
      \node(RD) at (1.7,-0.8) {$\alB$};
      \node(MD) at (0,0) {$T\alX$};
      \draw[1cell] (LU) to node {$T^2x$} (Top);
      \draw[1cell,swap] (LU) to node {$mT$} (LD);
      \draw[1cell] (Top) to node {$Tx$} (RU);
      \draw[1cell,swap] (LD) to node {$m$} (Bot);
      \draw[1cell] (LD) to node {$Tx$} (MD);
      \draw[1cell] (RU) to node(l) {$\f'$} (RD);
      \draw[1cell,swap] (Bot) to node {$\f'$} (RD);
      \draw[1cell,swap] (MD) to node {$\f'$} (RD);
      \draw[1cell,swap] (Top) to node {$m$} (MD);
      \twocelld{0.8,0.4}{$\bar{f}'$}
      \twocelld{0,-0.8}{$\bar{f}'$}
    \end{tikzpicture}
  \]
  \[
    \begin{tikzpicture}[baseline={(0,-0.3)}]
      \node(top) at (0,0) {$T^2\alX$};
      \node(ul) at (-1.7,0.8) {$T\alX$};
      \node(ur) at (1.7,0.8) {$T\alX$};
      \node(dl) at (-1.7,-0.8) {$T\alX$};
      \node(dr) at (1.7,-0.8) {$\alB$};
      \node(m) at (0,-1.6) {$T\alX$};
      \draw[1cell equality] (ul) to (dl);
      \draw[1cell equality,bend left=17] (ul) to (ur);
      \draw[1cell, swap] (ul) to node{$Te$} (top);
      \draw[1cell, swap] (top) to node{$Tx$} (ur);
      \draw[1cell] (ur) to node {$\f'$} (dr);
      \draw[1cell,swap] (top) to node{$m$} (m);
      \draw[1cell equality] (dl) to (m);
      \draw[1cell,swap] (m) to node {$\f'$} (dr);
      \twocelld{0,0.65}{$T\xi^0$}
      \twocelld{0.8,-0.4}{$\bar{f}'$}
    \end{tikzpicture}
    \quad=\quad
    \begin{tikzpicture}[baseline={(0,-0.3)}]
      \node(ul) at (-1.7,0.8) {$T\alX$};
      \node(ur) at (1.7,0.8) {$T\alX$};
      \node(dl) at (-1.7,-0.8) {$T\alX$};
      \node(dr) at (1.7,-0.8) {$\alB$};
      \node(m) at (0,-1.6) {$T\alX$};
      \draw[1cell equality] (ul) to (dl);
      \draw[1cell equality,bend left=17] (ul) to (ur);
      \draw[1cell] (ur) to node {$\f'$} (dr);
      \draw[1cell equality] (dl) to (m);
      \draw[1cell,swap] (m) to node {$\f'$} (dr);
      \draw[1cell,swap,bend left=17] (dl) to node {$\f'$} (dr);
    \end{tikzpicture}
  \]
  Therefore, the $(l,c)$-codescent object of the following lax coherence data in $\alg{s}{T}$ classifies the colax morphism out of $\alX$.
  \[
    \begin{tikzpicture}[baseline={(0,-0.8)}]
      \node(dummy) at (0,0.5) {};
      \node(X1) at (3,0) {$T\alX$};
      \node(X2) at (0,0) {$T^2\alX$};
      \node(X3) at (-3,0) {$T^3\alX$};
      \draw[1cell, transform canvas={yshift=14pt}] (X3) -- node {$mT$} (X2);
      \draw[1cell] (X3) -- node {$Tm$} (X2);
      \draw[1cell, transform canvas={yshift=-14pt}] (X3) -- node {$T^2x$} (X2);
      \draw[1cell, transform canvas={yshift=14pt}] (X2) -- node(l) {$m$} (X1);
      \draw[1cell,swap] (X1) -- node {$Te$} (X2);
      \draw[1cell, transform canvas={yshift=-14pt}] (X2) -- node {$Tx$} (X1);
    \end{tikzpicture}
  \]
  \[
    T\xi^2\colon Tx.T^2x \Rightarrow Tx.Tm, \qquad T\xi^0\colon Tx.1\Rightarrow Te.
  \]
\end{proof}

Suppose that $(\g \colon T\alX\rightarrow \Q\alX, \bar{g}\colon \g.Tx\Rightarrow \g.m)$ is the colimiting cocone of the coherence data.
Then the 1-cell $p$ of the unit $\p=(p,\bar{p})\colon\alX \rightsquigarrow \Q\alX$ is given by $X\xrightarrow{e}TX\xrightarrow{g}\Q X$.

\begin{example}\label{eg:T_2-cat_admits_classifier}
  The 2-monad $T_\TwoCat$ admits the lax morphism classifier $\Q\colon \BiCatLI \rightarrow \TwoCatTI$.
  We write $\Q\C$ as $\barC$ and call it a \emph{lax functor classifier}.
  From this adjunction, for each 2-category $\K$, we have an isomorphism of categories natural in $\K$.
  \[
    \TwoCatTI(\barC, \K) \iso \BiCatLI(\C,\K)
  \]
\end{example}

This isomorphism says the existence of a canonical bijective correspondence between 2-functor $\barC\rightarrow\K$ and lax functor $\C\rightsquigarrow\K$, which can extend to the correspondence of icons.
More interestingly, this isomorphism of hom-categories extends further.

\begin{proposition}\label{prop:lax_fun_classifier}
  The pre-composition of the lax functor $\p_\C\colon \C\rightsquigarrow \barC$ induces an isomorphism of 2-categories
  $\Tw{w}{\barC, \K} \iso \Lw{w}{\C, \K}$ for each $w \in \{s,p,l,c\}$,
  which is natural in $\C\in\BiCatLZ$ and 2-natural in $\K\in\TwoCAT$.
\end{proposition}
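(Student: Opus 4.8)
The plan is to reduce the whole $2$-categorical isomorphism to the object-level classifier bijection of \Cref{eg:T_2-cat_admits_classifier}, which is simply precomposition with $\p_\C$: for \emph{every} $2$-category $\mathcal{L}$ it gives a bijection between $2$-functors $\barC\to\mathcal{L}$ and lax functors $\C\rightsquigarrow\mathcal{L}$, sending $\Phi$ to $\Phi\circ\p_\C$, and this is strictly natural in $\mathcal{L}$ for free, since $(K\Phi)\circ\p_\C=K\circ(\Phi\circ\p_\C)$ for any $2$-functor $K$ out of $\mathcal{L}$. The idea is to feed the three auxiliary codomains $\K$, $\Tw{\overline{w}}{\Two,\K}$ and $\Tw{\overline{w}}{\Eye,\K}$ into this bijection and let \Cref{lem:w_nat_is_functor} translate the outputs into the objects, $1$-cells and $2$-cells of the two functor $2$-categories. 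At $\mathcal{L}=\K$ this is exactly the bijection on objects $\TwoCat_0(\barC,\K)\iso\{\text{lax }\C\rightsquigarrow\K\}$.

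For the $1$-cells, \Cref{lem:w_nat_is_functor} presents a $w$-natural transformation between $2$-functors $\barC\to\K$ as a single $2$-functor $\barC\to\Tw{\overline{w}}{\Two,\K}$; the same argument only unwinds the defining data and nowhere uses strictness of the domain, so it applies verbatim to a bicategorical domain and presents a $w$-natural transformation between lax functors $\C\rightsquigarrow\K$ as a single lax functor $\C\rightsquigarrow\Tw{\overline{w}}{\Two,\K}$. Instantiating the bijection at $\mathcal{L}=\Tw{\overline{w}}{\Two,\K}$ therefore matches the $1$-cells on the two sides; since $\dom$ and $\cod$ are $2$-functors $\Tw{\overline{w}}{\Two,\K}\to\K$ and the bijection is natural in $\mathcal{L}$, the matching respects sources and targets, hence is compatible with the object bijection. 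Replacing $\Two$ by $\Eye$ and using the second half of \Cref{lem:w_nat_is_functor} does the same for modifications.

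The step that needs genuine care — and which I expect to be the \emph{main obstacle} — is checking that these three cellwise bijections respect all the compositional structure, so that together they form an isomorphism of $2$-categories and not merely a bijection on cells. My strategy is to observe that each operation (vertical and horizontal composition and whiskering of $w$-natural transformations, vertical composition of modifications, and all identity cells) is itself induced by postcomposition with a structure $2$-functor relating the auxiliary $2$-categories to suitable pasting shapes: the identity transformation on $F$ is $F$ followed by the identity-arrow inclusion $\K\to\Tw{\overline{w}}{\Two,\K}$, vertical composition is induced by the composite-arrow $2$-functor out of $\Tw{\overline{w}}{\mathbf{3},\K}$ (with $\mathbf{3}$ the free $2$-category on two composable arrows), and so on. Because the object-level bijection is strictly natural in the codomain, each such structure $2$-functor is transported unchanged, so the cellwise bijections automatically intertwine the operations. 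Producing the correct pasting shapes and structure maps for every operation and verifying these naturalities is routine but bookkeeping-heavy, and this is where the bulk of the work lies.

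Finally, naturality in $\C\in\BiCatLZ$ follows from the naturality of the unit $\p$ of the classifier adjunction: a lax functor $G\colon\C\rightsquigarrow\C'$ gives $\Q G\colon\barC\to\barC'$ with $\p_{\C'}\circ G=\Q G\circ\p_\C$, so precomposition commutes with the two bijections. And $2$-naturality in $\K\in\TwoCAT$ is immediate, since the comparison is induced by the fixed lax functor $\p_\C$, independent of $\K$, while $\Tw{w}{\barC,-}$ and $\Lw{w}{\C,-}$ are $2$-functors $\TwoCat\to\TwoCAT$ by the functoriality recorded earlier.
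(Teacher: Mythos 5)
Your proposal is correct and takes essentially the same route as the paper's own proof: the paper likewise obtains the isomorphism by feeding the auxiliary codomains $\Tw{\bar{w}}{\Two,\K}$ and $\Tw{\bar{w}}{\Eye,\K}$ into the object-level classifier bijection of \Cref{eg:T_2-cat_admits_classifier} and invoking \Cref{lem:w_nat_is_functor} to identify the 1-cells and 2-cells on both sides. The paper simply records this chain of bijective correspondences and treats the compatibility with composition and the naturality statements as immediate, so the bookkeeping you flag as the main obstacle is precisely what the paper elides.
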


\begin{proof}
  Let $F,G$ be 2-functors $\barC\rightarrow\K$.
  \Cref{lem:w_nat_is_functor} shows the bijective correspondence:
  \begin{center}
    \begin{prooftree}[rule style=double]
      \hypo{\alpha\colon F\rightsquigarrow G\colon w'\text{-natural transformation}}
      \infer1{H\colon \barC\rightarrow \Lw{\bar{w}'}{\Two, \K} \text{\ s.t.\ } \dom H =F, \cod H = G}
      \infer1{H'\colon \C\rightsquigarrow \Lw{\bar{w}'}{\Two, \K} \text{\ s.t.\ } \dom H' =F\p, \cod H' = G\p}
      \infer1{\alpha'\colon F\p\rightsquigarrow G\p\colon w'\text{-natural transformation}}
    \end{prooftree}
  \end{center}
  For modifications, it suffices to consider $\Eye$ in place of $\Two$.
\end{proof}

We can also see $\barC$ in the other way:
Since the data of a lax functor $\C\rightsquigarrow\K$ consists of
the image of 1-cells $\{Ff\}$ and 2-cells $\{F\alpha\}$,
comparison 2-cells $\{F^2_{gf}\colon Fg.Ff\Rightarrow F(gf)\}$ and $\{F^0_A \colon 1_{FA}\Rightarrow Fa_A\}$,
and some equality of 1-cells and 2-cells,
this is just specifying a diagram in $\K$.
Therefore, there exists a ``free lax functor 2-category'' $\barC$ such that data of a diagram $\barC\rightarrow\K$
is the same as that of a lax functor $\C\rightsquigarrow\K$.

More explicitly, $\barC$ is the 2-category obtained by generating from the following computad $\cG_\C$,
\begin{itemize}
  \item Objects $|\cG_\C|$ are those in $\C$.
  \item 1-cells are those in $\C$.
  \item 2-cells are those in $\C$ together with comparison maps $F_{(f_0,\dots,f_{n-1})}$ for each composable sequence $(f_0,\dots,f_{n-1})$ in $\C$.
        \[
          \begin{tikzpicture}
            \node(a) at (-3,0) {$A$};
            \node(b) at (3,0) {$B$};
            \node at (0,1) {$\cdots$};
            \draw[1cell] (a) to node{$f_0$} (-2,0.6);
            \draw[1cell] (-2,0.6) to node{$f_1$} (-1.2,0.9);
            \draw[1cell] (-1.2,0.9) to node{$f_2$} (-0.4,1);
            \draw[1cell] (0.4,1) to node{$f_{n-3}$} (1.2,0.9) ;
            \draw[1cell] (1.2,0.9) to node{$f_{n-2}$}(2,0.6);
            \draw[1cell] (2,0.6) to node{$f_{n-1}$} (b);
            \draw[1cell,bend right,swap] (a) to node{$f_{n-1}f_{n-2}\dots f_1f_0$} (b);
            \twocelld{-0.7,0}{$F_{(f_0,\dots,f_{n-1})}$}
          \end{tikzpicture}
        \]
\end{itemize}
and taking the quotient by some equality of 2-cells.
The set of 1-cells in the resulting 2-category $\barC$ consists of
all composable sequences of 1-cells in $\C$.

Then, the unit $\p_\C\colon\C\rightsquigarrow\barC$ is a bijective-on-objects lax functor that maps $f$ to $(f)$.
On the other hand, another bijective-on-objects lax functor of the opposite direction $\q_\C\colon \barC\rightsquigarrow \C$ exists that maps a composable sequence $(f_0,\dots,f_{n-1})$ to its composition $f_{n-1}\cdots f_0$ and $F_{(f_0,\dots,f_{n-1})}$ to the identity.
When $\C$ is a 2-category $\A$, this $\q_\A$ coincides with the counit, and $\q_\A$ becomes a 2-functor.
Calculating the composition $\q_\C\p_\C$, it turns out to be identity,
and there exists an icon to another composition $\eta^\C\colon 1\Rightarrow \p_\C\q_\C$,
whose component $\eta^\C_{(f_0,\dots,f_{n-1})}$ is $F_{(f_0,\dots,f_{n-1})}$.
Since $\q_\C\eta^\C = 1$ and $\eta^\C\q_\C = 1$, $\p_\C$ is right adjoint to $\q_\C$ with identity counit in $\BiCatLI$.

There is also a colax morphism classifier $\mathcal{Q}^\TwoCat_{c}\C$ classifying oplax functors,
which is, in fact, isomorphic to $\overline{\C^\co}^\co$.
This is because $\Ow{w}{\C, \K} \iso \Lw{\bar{w}}{\C^\co,\K^\co}^\co \iso \Tw{\bar{w}}{\overline{\C^\co}, \K^\co}^\co \iso \Tw{w}{\overline{\C^\co}^\co, \K}$.

We now restrict to the case of $w' = \bar{w}$ and suppose that
$T$ preserves $(w,\bar{w})$-codescent objects.
We show that any unit $\p\colon \alX\rightarrow\Q\alX$ has an adjoint in the base 2-category $\K$.
That is,
\begin{itemize}
  \item if $w = c$, $p$ has a left adjoint $q\colon \Q X\rightarrow X$ in $\K$.
  \item if $w = l$, $p$ has a right adjoint $q\colon \Q X\rightarrow X$ in $\K$.
  \item if $w = p$, $p$ and $q\colon \Q X\rightarrow X$ make $X$ and $\Q X$ equivalent in $\K$.
\end{itemize}

\begin{theorem}\label{thm:w-barw_classifier_induce_adjunction}
  Suppose that $\K$ admits and $T$ preserves $(w,\bar{w})$-codescent objects.
  Then, $\alg{s}{T}$ has all $(w,\bar{w})$-codescent objects, and therefore there exists a $\bar{w}$-morphism classifier
  $\Q\colon\walg{w}{\bar{w}}{T}\rightarrow \alg{s}{T}$.
  Let $\alX$ be a $w$-algebra.
  From the universality of codescent object $\Q X$ in $\K$,
  there exists a 1-cell $q\colon \Q X \rightarrow X$ satisfying
  the following,
  which is adjoint to $p$ in $\K$.
  \[
    \begin{tikzpicture}[baseline={(0,0)}]
      \node(r) at ( 2,0) {$X$};
      \node(t) at (0, 0.7) {$TX$};
      \node(b) at (0,-0.7) {$TX$};
      \node(l) at (-2,0) {$T^2X$};
      \node at (0,0) {$\xi^2$};
      \draw[1cell] (l) to node {$Tx$} (t);
      \draw[1cell] (t) to node {$x$} (r);
      \draw[1cell,swap] (l) to node {$m$} (b);
      \draw[1cell,swap] (b) to node {$x$} (r);
    \end{tikzpicture}
    \quad=\quad
    \begin{tikzpicture}[baseline={(0,0)}]
      \node(r) at ( 0.5,0) {$\Q X$};
      \node(x) at (2,0) {$X$};
      \node(t) at (-0.7, 0.7) {$TX$};
      \node(b) at (-0.7,-0.7) {$TX$};
      \node(l) at (-2,0) {$T^2X$};
      \node at (-0.7,0) {$\bar{g}$};
      \draw[1cell] (l) to node {$Tx$} (t);
      \draw[1cell] (t) to node {$g$} (r);
      \draw[1cell,swap] (l) to node {$m$} (b);
      \draw[1cell,swap] (b) to node {$g$} (r);
      \draw[1cell] (r) to node {$q$} (x);
    \end{tikzpicture}
  \]
\end{theorem}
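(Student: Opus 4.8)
The plan is to read off the adjunction $p\dashv q$ directly from the universal property of the codescent object $\Q X$, working in the representative case $w=l$. The case $w=c$ is formally dual---apply the argument to the 2-monad induced by $T$ on $\K^{\co}$, under which oplax algebras become lax algebras and the roles of left and right adjoints interchange---and the case $w=p$ follows at once, since there $\xi^2$ and $\xi^0$ are invertible and the unit and counit constructed below become isomorphisms. First I would record that $\alg{s}{T}$ has the required codescent objects: the forgetful 2-functor $\alg{s}{T}\to\K$ creates those colimits that $T$ preserves, and $\K$ has them by hypothesis. The same creation property is what makes the proof work, for it shows that the underlying object $\Q X$ of $\Q\alX$ is the colax codescent object \emph{in $\K$} of the underlying lax coherence data, with colimiting cocone $(g\colon TX\to\Q X,\ \bar g\colon g.Tx\Rightarrow g.m)$ subject to the two codescent coherence equations coming from $\chi^2=T\xi^2$ and $\chi^0=T\xi^0$. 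Thus the one- and two-dimensional universal properties of $\Q X$ become available against the object $X$ of $\K$.

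Next I would construct $q$ by feeding the pair $(x\colon TX\to X,\ \xi^2\colon x.Tx\Rightarrow x.m)$ into the one-dimensional universality. This pair is a cocone on the coherence data precisely because its two cocone equations are the associativity axiom and one of the two unit axioms of the lax algebra $\alX$; universality then produces a unique $q\colon\Q X\to X$ with $q.g=x$ and $q.\bar g=\xi^2$, which is exactly the displayed equation. For the unit of the adjunction I take $\eta:=\xi^0\colon 1_X\Rightarrow x.e=q.g.e=q.p$, where $p=g.e$.

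For the counit $\varepsilon\colon p.q\Rightarrow 1_{\Q X}$ I would invoke two-dimensional universality. Naturality of $e$ gives $e.x=Tx.eT$, and the monad law $m.eT=1$ gives $g.m.eT=g$, so whiskering $\bar g$ by $eT$ yields a 2-cell $\phi:=\bar g.eT\colon p.q.g=g.e.x=g.Tx.eT\Rightarrow g.m.eT=g$. The real work of this step is to verify that $\phi$ satisfies the compatibility condition needed to descend along the cocone $(g,\bar g)$; this is where the associativity codescent equation ($\chi^2=T\xi^2$) has to be pasted against the associativity and naturality laws of $m$. Granting the compatibility, $\varepsilon$ is the unique induced 2-cell with $\varepsilon.g=\phi$.

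Finally I would check the two triangle identities. The identity $\varepsilon p\cdot p\eta=1_p$ can be verified by hand, without uniqueness: 2-naturality of $e$ rewrites $p\eta=(g.T\xi^0).e$ and $\varepsilon p=(\bar g.Te).e$, so their composite is $\big[(\bar g.Te)(g.T\xi^0)\big].e$, which equals $1_g.e=1_p$ by the very $\chi^0$-coherence equation $(\bar g.Te)(g.T\xi^0)=1_g$ that $(g,\bar g)$ satisfies. For $q\varepsilon\cdot\eta q=1_q$ I would precompose with the cocone leg $g$: using $q.g=x$, $q.\bar g=\xi^2$ and $e.x=Tx.eT$, the composite restricts to $(\xi^2.eT)(\xi^0.x)$, which is the remaining unit axiom of $\alX$ and hence $1_x$; since $1_q$ also restricts to $1_x$, the uniqueness clause of two-dimensional universality forces $q\varepsilon\cdot\eta q=1_q$. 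The main obstacle is the compatibility check for $\phi$ in the counit step: whereas the triangle identities drop out of the $\chi^0$-equation and the unit axioms, the compatibility genuinely requires the associativity codescent equation together with careful bookkeeping between the $Te$- and $eT$-sides of the monad structure.
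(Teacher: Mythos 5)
Your proposal is correct and follows essentially the same route as the paper's proof: you obtain $q$ from the one-dimensional universality applied to the cocone $(x,\xi^2)$, take $\eta=\xi^0$, induce $\varepsilon$ from $\bar g.eT$ via two-dimensional universality, and verify the triangle identities using the $\chi^0$-codescent equation and the remaining unit axiom of $\alX$ together with uniqueness. In fact you supply details the paper leaves implicit (the creation of codescent objects by the forgetful 2-functor and the explicit triangle-identity computations), so nothing is missing.
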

\begin{proof}
  We prove this for the case $w = l$.
  From the axioms of lax algebra, it follows that $\xi^2\colon x.Tx\Rightarrow xm$ defines
  an oplax cocone of the lax coherence data.
  Therefore, we can obtain $q$ from the universality of $\Q X$.
  Note that, if $w = w' = l$, this construction fails because the direction of 2-cells does not match.

  We need to find the unit $\eta\colon 1\Rightarrow qp$ and counit $\varepsilon\colon pq\Rightarrow 1$.
  The unit $\eta$ is easy:
  Since $qp = qge = xe$, we define $\eta$ by $\alpha^0\colon 1\Rightarrow xe$.
  For the counit $\varepsilon\colon pq\Rightarrow 1_{\Q X}$, we use the 2-dimensional universality of $\Q X$.
  We have a 2-cell $pqg = gex = g.Tx.eT \xrightarrow{\bar{g}.eT} g.m.eT = g$,
  so if we show that this is a modification between cocones,
  we obtain a 2-cell $\varepsilon$ satisfying $\varepsilon.g = \bar{g}.eT$.
  It suffices to show the diagram below commutes.
  \[
    \begin{tikzpicture}
      \node(uL) at (0,  0.7) {$g.Tx.eT.Tx$};
      \node(ul) at (4,  0.7) {$gm.eT.Tx$};
      \node(ur) at (7,  0.7) {$g.Tx$};
      \node(uR) at (11, 0.7) {$g.m$};
      \node(dL) at (0, -0.7) {$pqg.Tx$};
      \node(dl) at (4, -0.7) {$g.Tx.eT.Tx$};
      \node(dr) at (7, -0.7) {$g.Tx.eT.Tx$};
      \node(dR) at (11,-0.7) {$g.Tx.eT.Tx$};
      \draw[1cell] (uL) to node {$\bar{g}.eT.Tx$} (ul);
      \draw[1cell] (ur) to node {$\bar{g}$} (uR);
      \draw[1cell] (dL) to node {$pq.\bar{g}$} (dl);
      \draw[1cell] (dr) to node {$\bar{g}.eT.m$} (dR);
      \draw[1cell equality] (uL) to (dL);
      \draw[1cell equality] (uR) to (dR);
      \draw[1cell equality] (ul) to (ur);
      \draw[1cell equality] (dl) to (dr);
    \end{tikzpicture}
  \]
  This diagram can be shown by pre-composing $e$ to the following diagram.
  \[
    \begin{tikzpicture}
      \node(L) at (1, 0) {$g.Tx.T^2x$};
      \node(ul) at (4, 0.7) {$g.m.T^2x$};
      \node(ur) at (7, 0.7) {$g.Tx.mT$};
      \node(R) at (10, 0) {$g.m.mT$};
      \node(dl) at (4,-0.7) {$g.Tx.Tm$};
      \node(dr) at (7,-0.7) {$g.m.Tm$};
      \draw[1cell] (L) to node {$\bar{g}.T^2$} (ul);
      \draw[1cell] (ur) to node {$\bar{g}.mT$} (R);
      \draw[1cell,swap] (L) to node {$g.T\xi^2$} (dl);
      \draw[1cell] (dl) to node {$\bar{g}.Tm$} (dr);
      \draw[1cell equality] (ul) to (ur);
      \draw[1cell equality] (dr) to (R);
    \end{tikzpicture}
  \]
  What remains to show is the triangular identities.
  The first one $1_p = p\xrightarrow{p\eta}pqp\xrightarrow{\varepsilon p}p$ is straightforward.
  To show the other, it suffices to show
  $1_{qg} = 1_x = x\xrightarrow{\eta x}qpx\xrightarrow{q\bar{g}.eT\ =\ q\varepsilon g}qx$
  because of the 2-dimesional universality of $\Q X$, which is not hard to prove.
\end{proof}

\begin{remark}
  Suppose that $\K$ admits and $T$ preserves $(p,w)$-codescent objects.
  As in the former theorem, the inclusion $\alg{s}{T}\rightarrow \palg{w}{T}$ has a right adjoint $\Q_w$.
  This time, the unit $\p$ has an adjunction $q$ not only in $\K$ but also in $\palg{w}{T}$.
\end{remark}

\begin{example}\label{eg:T_Fun_has_Q_and_adjoint}
  We saw in \Cref{eg:T_Fun_is_2-monad}
  that the 2-monad $T_\mathbf{Fun}$ on $\TT{\ob(\A),\K}$ is cocontinuous.
  Therefore, \Cref{thm:w-barw_classifier_induce_adjunction} shows that a colax morphism classifier $\Q_c\colon\LO{\A,\K}\rightarrow \TT{\A,\K}$ exists,
  and between a lax functor $F\colon \A\rightsquigarrow \K$ and the 2-functor $\Q F\colon\A\rightarrow \K$,
  there are pointwise adjunctions $p_A\dashv q_A$.
  \[
    \begin{tikzpicture}
      \node(l) at (-1.5,0) {$FA$};
      \node(r) at (1.5,0) {$\Q FA$};
      \node[labelsize] at (-0.08,-0.17) {$\bot$};
      \draw[1cell] (l) to node {$p_A$} (r);
      \draw[1cell,bend left=20] (r) to node {$q_A$} (l);
    \end{tikzpicture}
  \]
  This colax morphism classifier $\Q_c$ is the \emph{Kleisli construction} of lax functors in \cite{Street_1972_two_const_lax_func}.
  In particular, when $\A = \One$ and $T = \Id_\K$, the colax morphism classifier is the Kleisli object of a monad in $\K$.

  Now, let $\A$ be a small 2-category and $\K$ be a complete 2-category.
  Considering the monad $T$ on $[\ob(\A),\K^\op]$ whose 2-category of strict algebras and strict morphisms is $\TT{\A^\op,\K^\op}$,
  we have a right adjoint $\R_l\colon\LO{\A,\K}\rightarrow \TT{\A,\K}$ of the obvious inclusion 2-functor and
  pointwise adjunctions.
  \[
    \begin{tikzpicture}
      \node(l) at (-1.5,0) {$FA$};
      \node(r) at (1.5,0) {$\R_l FA$};
      \node[labelsize] at (-0.1,0.2) {$\bot$};
      \draw[1cell] (r) to node {$u_A$} (l);
      \draw[1cell,bend left=20,pos=0.5] (l) to node {$f_A$} (r);
    \end{tikzpicture}
  \]
  This time, the right adjunction $\R_l$ is the \emph{Eilenberg-Moore construction} of lax functors in \cite{Street_1972_two_const_lax_func},
  which generalizes the Eilenberg-Moore object.
  We call it a \emph{lax morphism coclassifier}.
\end{example}

Combining the result of \Cref{prop:lax_fun_classifier} and \Cref{eg:T_Fun_has_Q_and_adjoint},
we obtain the following commutative diagrams of adjunctions.
\[
  \begin{tikzpicture}[baseline=(b.base)]
    \node(ul) at (0,2) {$\TT{\,\barA,\K}$};
    \node(ur) at (4,2) {$\TO{\,\barA,\K}$};
    \node(dl) at (0,0) {$\TT{\A,\K}$};
    \node(dr) at (4,0) {$\LO{\A,\K}$};
    \draw[1cell, transform canvas={yshift=-5pt}] (ul) to node {} (ur);
    \draw[1cell, transform canvas={yshift=-5pt}] (dl) to node {} (dr);
    \draw[1cell, swap, transform canvas={xshift=5pt}] (dl) to node {$-\circ p$} (ul);
    \draw[1cell, swap] (dr) to node [label=left:{$\iso$}]{$-\circ p$} (ur);
    \draw[1cell, swap, transform canvas={yshift=5pt}] (ur) to node {$\mathcal{Q}_{c}$} (ul);
    \draw[1cell, swap, transform canvas={yshift=5pt}] (dr) to node {$\mathbf{Kl}\coloneqq\mathcal{Q}_{c}$} (dl);
    \draw[1cell, swap, transform canvas={xshift=-5pt}] (ul) to node(b) {$\mathrm{Lan}_p(-)$} (dl);
    \node at (0,1) {$\scriptstyle\dashv$};
    \node at (2,0) {$\scriptstyle\bot$};
    \node at (2,2) {$\scriptstyle\bot$};
  \end{tikzpicture}
  \qquad
  \begin{tikzpicture}[baseline=(b.base)]
    \node(ul) at (0,2) {$\TT{\,\barA,\K}$};
    \node(ur) at (4,2) {$\TL{\,\barA,\K}$};
    \node(dl) at (0,0) {$\TT{\A,\K}$};
    \node(dr) at (4,0) {$\LL{\A,\K}$};
    \draw[1cell, transform canvas={yshift=5pt}] (ul) to node {} (ur);
    \draw[1cell, transform canvas={yshift=5pt}] (dl) to node {} (dr);
    \draw[1cell, transform canvas={xshift=-5pt}] (dl) to node {$-\circ u$} (ul);
    \draw[1cell, swap] (dr) to node [label=left:{$\iso$}]{$-\circ u$} (ur);
    \draw[1cell, transform canvas={yshift=-5pt}] (ur) to node {$\mathcal{R}_l$} (ul);
    \draw[1cell, transform canvas={yshift=-5pt}] (dr) to node {$\mathbf{EM}\coloneqq\mathcal{R}_l$} (dl);
    \draw[1cell, transform canvas={xshift=5pt}] (ul) to node(b) {$\mathrm{Ran}_u(-)$} (dl);
    \node at (0,1) {$\scriptstyle\dashv$};
    \node at (2,0) {$\scriptstyle\bot$};
    \node at (2,2) {$\scriptstyle\bot$};
  \end{tikzpicture}
\]

The left adjoint $\Q_c$ and the right adjoint $\R_l$ on the top of the diagram can be calculated with lax [co]ends \cite{Hirata_2022_lax_end}.

\subsection{Lax doctrinal adjunction and lax algebra}\label{subsec:T-adj}

We fix a 2-monad $(T,m,e)$ on $\K$.
The word \emph{doctrinal adjunction} was introduced by Kelly in his paper \cite{Kelly_1974_doctrinal_adjunction}.
He showed that if there is an adjunction $f\dashv g$
between $T$-algebras in a 2-category $\K$,
the bijection of mates induces a bijective correspondence between
a 2-cell $\bar{g}$ making $(g,\bar{g})$ a lax morphism
and a 2-cell $\bar{f}$ which makes $(f,\bar{f})$ a colax morphism.

In this section, we define \emph{lax doctrinal adjunction}
as an adjunction $X\inlineadj A$ whose codomain $A$ of the left adjoint is a $T$-algebra.
This time, we require no $T$-algebra structure on $X$,
but there is always a canonical lax $T$-algebra structure on $X$ induced by the left adjoint.
We can also show that the left adjoint has a canonical oplax morphism structure,
and the right adjoint has a canonical lax morphism structure.
This section's goal is to prove the coreflective embedding theorem (\Cref{thm:coreflective_embedding_thm}),
a generalization of the embedding $\mnd_l(\K) \rightarrow \TT{\Two, \K}$.
Let $U\colon \alg{s}{T}\rightarrow \K$ be the forgetful 2-functor.

\begin{lemma}\label{lem:adj_induce_lax_alg}
  Let $\alY = (Y, y, \zeta^2, \zeta^0)$ be a lax algebra.
  Then, any adjunction $l\colon X \inlineadj Y\colon r$ in the base 2-category $\K$
  induces a lax algebra structure on $X$.
\end{lemma}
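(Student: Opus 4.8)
The plan is to transport the lax algebra structure of $\alY$ across the adjunction, using the right adjoint to build the structure map and the unit and counit to build the two comparison cells. Write $\eta\colon 1_X\Rightarrow rl$ and $\varepsilon\colon lr\Rightarrow 1_Y$ for the data of $l\dashv r$. First I would take as structure map the composite
\[
  x \;=\; r.y.Tl\colon TX\xrightarrow{Tl} TY\xrightarrow{y} Y\xrightarrow{r} X .
\]
Throughout I will use that $e\colon\Id\Rightarrow T$ and $m\colon T^2\Rightarrow T$ are \emph{2}-natural, so that $Tl.e_X = e_Y.l$ and $m_Y.T^2l = Tl.m_X$ as equalities of 1-cells.

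For the unit comparison $\xi^0\colon 1_X\Rightarrow x.e_X$, 2-naturality of $e$ gives $x.e_X = r.y.e_Y.l$, so I would define $\xi^0$ as the vertical composite of $\eta\colon 1_X\Rightarrow r.l$ with the whiskered cell $r\zeta^0 l\colon r.l\Rightarrow r.y.e_Y.l$. For the associativity comparison $\xi^2\colon x.Tx\Rightarrow x.m_X$, I would expand $x.Tx = r.y.Tl.Tr.Ty.T^2l$ and note that the middle factor $Tl.Tr = T(lr)$ is contracted by $T\varepsilon$. Whiskering $T\varepsilon$ into this composite produces $r.y.Ty.T^2l$; then applying $\zeta^2\colon y.Ty\Rightarrow y.m_Y$ (whiskered by $r$ and $T^2l$) produces $r.y.m_Y.T^2l$, which equals $x.m_X$ by 2-naturality of $m$. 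Thus $\xi^2$ is the vertical composite $(r\zeta^2 T^2l)\cdot(ry\,T\varepsilon\,Ty\,T^2l)$.

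The remaining and genuinely laborious task is to check the lax algebra coherence axioms for $(X,x,\xi^2,\xi^0)$, and this is where I expect the main obstacle. The two unit axioms are the easier ones: after whiskering they reduce to the unit axioms of $\alY$ together with a single triangle identity of $l\dashv r$, the point being that an insertion of $\eta$ produced by $\xi^0$ meets a contraction $T\varepsilon$ produced by $\xi^2$, and $\varepsilon l\cdot l\eta = 1_l$ (equivalently $r\varepsilon\cdot\eta r = 1_r$) makes them cancel. The hard part will be the associativity axiom, comparing the two pastings of $\xi^2$ built on $x.Tx.T^2x$: one must commute the contractions coming from $\varepsilon$ (including a $T^2\varepsilon$ arising from the outermost factor) past the associativity cell $\zeta^2$ of $\alY$, the essential inputs being the associativity axiom of $\alY$, 2-naturality of $m$ (yielding $m.Tm = m.mT$ after whiskering), naturality of $\varepsilon$, and the interchange law. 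I anticipate this to be a long but entirely mechanical pasting computation with no conceptual content beyond careful bookkeeping; conceptually it is simply transport of lax structure along an adjunction, which for an adjoint equivalence and a pseudo $\alY$ would instead yield a pseudo algebra.
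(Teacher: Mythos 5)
Your proposal is correct and matches the paper's proof essentially verbatim: the paper takes the same structure map $x = r.y.Tl$, the same unit comparison $\xi^0 = (r\zeta^0 l)\cdot\eta$ (via 2-naturality of $e$), and the same associativity comparison $\xi^2 = (r\zeta^2 T^2l)\cdot(ry\,T\varepsilon\,Ty\,T^2l)$ (via 2-naturality of $m$), and then, like you, leaves the coherence axioms as a routine check ("one can check that the data above satisfies the axiom of lax algebras"). Your sketch of how the verification reduces to the triangle identities, the axioms of $\alY$, and interchange is accurate and in fact more detailed than what the paper records.
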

\begin{proof}
  We define the structure map $x$ by $TX\xrightarrow{Tl}TY\xrightarrow{y}Y\xrightarrow{r}X$,
  and $\xi^2$, $\xi^0$ as follows.
  \[
    \begin{tikzpicture}[baseline={(0,0)}]
      \node(uL) at (-5,3) {$T^2X$};
      \node(ul) at (-3,3) {$T^2Y$};
      \node(ur) at (-1,3) {$TY$};
      \node(uR) at ( 1,3) {$TX$};
      \node(m) at ( 1,1.5) {$TY$};
      \node(dL) at (-5,0) {$TX$};
      \node(dl) at (-3,0) {$TY$};
      \node(dR) at ( 1,0) {$Y$};
      \node(R)  at ( 3,0) {$X$};
      \draw[1cell,swap] (uL) to node {$m$} (dL);
      \draw[1cell,swap] (ul) to node {$m$} (dl);
      \draw[1cell] (uR) to node {$Tl$} (m);
      \draw[1cell] (m) to node {$y$}  (dR);
      \draw[1cell] (uL) to node {$T^2l$} (ul);
      \draw[1cell] (ul) to node {$Ty$} (ur);
      \draw[1cell] (ur) to node {$Tr$} (uR);
      \draw[1cell] (dL) to node {$Tl$} (dl);
      \draw[1cell] (dl) to node {$y$} (dR);
      \draw[1cell equality, bend right] (ur) to (m);
      \draw[1cell] (dR) to node {$r$} (R);
      \twocelldl[near start]{0,2.4}{$T\varepsilon$};
      \twocelldl{-1.2,1.4}{$\zeta^2$};
    \end{tikzpicture}
    \quad
    \begin{tikzpicture}[baseline={(0,0)}]
      \node(ul) at (-4,3) {$X$};
      \node(ur) at (0,3) {$TX$};
      \node(ml) at (-2,2) {$Y$};
      \node(mr) at (0,2) {$TY$};
      \node(d) at (0,1) {$Y$};
      \node(bot) at (0,0) {$X$};
      \draw[1cell] (ul) to node {$e$} (ur);
      \draw[1cell] (ml) to node {$e$} (mr);
      \draw[1cell] (ul) to node [auto=false, fill=white] {$l$} (ml);
      \draw[1cell] (ur) to node {$Tl$} (mr);
      \draw[1cell equality, bend right] (ml) to (d);
      \draw[1cell equality, bend right] (ul) to (bot);
      \draw[1cell] (mr) to node{$y$} (d);
      \draw[1cell] (d) to node {$r$}(bot);
      \twocellur{-0.7,1.35}{$\zeta^0$};
      \twocellur{-1.8,0.9}{$\eta$};
    \end{tikzpicture}
  \]
  Then, one can check that the data above satisfies the axiom of lax algebras.
\end{proof}

Those adjunctions will be called lax doctrinal when $\mathbb{Y}$ is a strict algebra.

\begin{definition}
  A \emph{lax doctrinal adjunction} of a 2-monad $T$ consists of a pair of a strict algebra $\alA$
  and an adjunction $l\dashv r\colon U\alA\rightarrow X$ in the base 2-category $\K$.
  We denote a lax doctrinal adjunction as $l\colon X\rightleftarrows \alA\colon r$, assuming the arrow above is the left adjoint.
  We sometimes omit writing $r$.
  The left/right adjoint 1-cell is called lax doctrinal left/right adjoint.

  We define the 2-category $\lladj{T}$ of lax doctrinal left adjoint
  by the full sub-2-category of the comma 2-category $\Id_\K/U$.
  In other words, $\lladj{T}$ has the following data:
  \begin{itemize}
    \item An object is a lax doctrinal adjunction
          $l\colon X\rightleftarrows \alA$.
    \item A 1-cell is a pair $(f,\f)$ that commutes with the left adjoints in $\K$.
          Abusing the notation, we write this in the right below.
          \[
            \begin{tikzpicture}[baseline={(0,0.3)}]
              \node(l) at (0,-0.7) {$\alA$};
              \node(r) at (2,-0.7) {$\alB$};
              \node(dl) at (0,0) {$U\alA$};
              \node(dr) at (2,0) {$U\alB$};
              \node(ul) at (0,1) {$X$};
              \node(ur) at (2,1) {$Y$};
              \draw[1cell] (l) to node {$\f$} (r);
              \draw[1cell] (dl) to node {$U\f$} (dr);
              \draw[1cell] (ul) to node {$ f$} (ur);
              \draw[1cell,swap] (ul) to node {$l$} (dl);
              \draw[1cell] (ur) to node {$l'$} (dr);
            \end{tikzpicture}
            \qquad \qquad
            \begin{tikzpicture}[baseline={(0,0.3)}]
              \node(dl) at (0,0) {$\alA$};
              \node(dr) at (1.5,0) {$\alB$};
              \node(ul) at (0,1) {$X$};
              \node(ur) at (1.5,1) {$Y$};
              \draw[1cell,swap] (dl) to node {$\f$} (dr);
              \draw[1cell] (ul) to node {$ f$} (ur);
              \draw[1cell,swap] (ul) to node {$l$} (dl);
              \draw[1cell] (ur) to node {$l'$} (dr);
            \end{tikzpicture}
          \]
    \item A 2-cell is a pair of 2-cells that are compatible.
  \end{itemize}
\end{definition}

\begin{lemma}\label{lem:adj_induce_colax_mor_2-cell}
  Let $l\colon X\rightleftarrows\alA$ and $l'\colon Y\rightleftarrows\alB$ be lax doctrinal adjunctions,
  and let $\alX$ and $\alY$ be the induced lax algebras.
  Then, any 1-cell $(f,\f)\colon l \rightarrow l'$ of lax doctrinal adjunctions also induces
  a colax morphism $\alX\rightsquigarrow\alY$, and any 2-cell induces a 2-cell of the corresponding colax morphisms.
\end{lemma}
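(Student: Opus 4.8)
The plan is to build the colax structure $2$-cell by mate calculus. Write $g := U\f\colon A\to B$ for the underlying $1$-cell of the strict morphism $\f$, so that $g$ is strict (i.e.\ $g\,a = b\,Tg$, where $a,b$ denote the actions of $\alA,\alB$), and recall that a $1$-cell of $\lladj{T}$ supplies the commuting square $g\,l = l'\,f$. By \Cref{lem:adj_induce_lax_alg} the induced structure maps are $x = r\,a\,Tl$ and $y = r'\,b\,Tl'$, where $l\dashv r$ and $l'\dashv r'$ are the two adjunctions with units $\eta,\eta'$ and counits $\varepsilon,\varepsilon'$. First I would take the mate of the (identity $2$-cell filling the) square $g\,l = l'\,f$ under these two adjunctions, producing
\[
  \phi\colon f\,r \xRightarrow{\ \eta'\,f\,r\ } r'\,l'\,f\,r = r'\,g\,l\,r \xRightarrow{\ r'\,g\,\varepsilon\ } r'\,g .
\]

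Next I would define $\widetilde f\colon f\,x \Rightarrow y\,Tf$ as the whiskering of $\phi$ by $a\,Tl$, followed by the strictness equalities:
\[
  f\,x = f\,r\,a\,Tl \xRightarrow{\ \phi\,a\,Tl\ } r'\,g\,a\,Tl = r'\,b\,Tg\,Tl = r'\,b\,T(l'\,f) = y\,Tf ,
\]
where the middle equality is $g\,a = b\,Tg$ and the last uses $g\,l = l'\,f$ together with the definition of $y$. This is essentially forced: it is Kelly's passage from a lax right adjoint to a colax left adjoint, transported along $l$.

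Then I would verify the two colax $T$-morphism axioms, using the explicit form of the induced structure from \Cref{lem:adj_induce_lax_alg}: since $\alA$ is \emph{strict}, $\xi^2$ is a whiskering of the counit, namely $\xi^2 = r\,a\,(T\varepsilon)\,Ta\,T^2l$ (using strict associativity $a\,Ta = a\,m$), and $\xi^0 = \eta$ (using the strict unit $a\,e = 1$); likewise $\zeta^2$ comes from $T\varepsilon'$ and $\zeta^0 = \eta'$. The unit axiom then reads $(\widetilde f\,e_X)\circ(f\,\xi^0) = \zeta^0\,f$, which unwinds to $(\widetilde f\,e_X)\circ(f\,\eta) = \eta'\,f$ and follows from the triangle identity $(\varepsilon\,l)\circ(l\,\eta) = 1_l$ of $l\dashv r$ after sliding $\eta'$ past $f\,\eta$ by naturality. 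The multiplication axiom, which equates $(\widetilde f\,m_X)\circ(f\,\xi^2)$ with $(\zeta^2\,T^2f)\circ(y\,T\widetilde f)\circ(\widetilde f\,Tx)$, I expect to be the main obstacle: it couples the two counits $T\varepsilon$, $T\varepsilon'$ with the strict morphism equality $g\,a = b\,Tg$. I would prove it through the functoriality of the mate construction — the mate of a pasting of squares is the pasting of the mates — which lets me rewrite both sides as a single pasting built from $\phi$, the strict structures of $\alA,\alB$, and the naturality squares of $m$ and $\varepsilon$; the two triangle identities then collapse them to a common $2$-cell.

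Finally, for the action on $2$-cells, a $2$-cell $(\alpha,\mathbb{a})\colon(f,\f)\Rightarrow(f_1,\f_1)$ of $\lladj{T}$ is a pair with $\alpha\colon f\Rightarrow f_1$ in $\K$ and $\mathbb{a}\colon\f\Rightarrow\f_1$ compatible, i.e.\ $l'\,\alpha = (U\mathbb{a})\,l$. I would show that $\alpha$ is a $2$-cell of colax morphisms, that is $(y\,T\alpha)\circ\widetilde f = \widetilde{f_1}\circ(\alpha\,x)$; since $\widetilde f$ and $\widetilde{f_1}$ differ only through the mates $\phi,\phi_1$ of the two squares, this is precisely the naturality of the mate correspondence in the $2$-cells $\alpha$ and $U\mathbb{a}$, and it follows by the same triangle-identity bookkeeping. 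Compatibility with the $2$-categorical structure of $\lladj{T}$ and $\lalg{c}{T}$ is then routine, so the assignment is well defined on objects, $1$-cells, and $2$-cells, as claimed.
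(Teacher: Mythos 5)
Your proposal is correct, but it takes a genuinely different route from the paper. The paper's proof is a two-line reduction: it regards the strict morphism $\f$ as a strict algebra of the 2-monad $\TL{\Two,T}$ on $\TL{\Two,\K}$, notes that the identity 2-cell $l'f=\f l$ together with its mate $fr\Rightarrow r'\f$ constitutes an adjunction between $f$ and $\f$ inside $\TL{\Two,\K}$, and then simply invokes \Cref{lem:adj_induce_lax_alg} in that ambient 2-category; a lax $\TL{\Two,T}$-algebra structure on the object $f$ is exactly a colax morphism structure, and the 2-cell statement follows the same way with $\Eye$ in place of $\Two$. You instead work entirely in $\K$: you define the structure 2-cell explicitly as the mate $\phi=(r'g\varepsilon)\circ(\eta'fr)$ whiskered by $a.Tl$ (which agrees precisely with the explicit pasting the paper records immediately after the lemma), identify $\xi^2=r.a.(T\varepsilon).Ta.T^2l$ and $\xi^0=\eta$ using strictness of $\alA$, and verify the colax axioms by mate calculus. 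Your verification strategy does go through: after cancelling the common whiskering by $Ta.T^2l$ and one interchange, the multiplication axiom reduces to the identity $(\varepsilon' g)\circ(l'\phi)=g\varepsilon$ of 2-cells $g.l.r\Rightarrow g$, which follows from the triangle identity for $l'\dashv r'$, naturality, and $gl=l'f$; the 2-cell compatibility is likewise the naturality of mates using $l'\alpha = (U\mathbb{a})l$ and the strict commutation of algebra 2-cells with the structure maps. The trade-off: the paper's reduction gets all coherence for free from \Cref{lem:adj_induce_lax_alg}, at the cost of the (suppressed) unpacking that lax $\TL{\Two,T}$-algebras are colax morphisms and that the induced componentwise structures agree with $\alX$ and $\alY$; your argument is self-contained and produces the explicit formula directly, but the multiplication axiom is the one place where your write-up is only a sketch — a complete proof would need to display that pasting computation, since it is the heart of the verification.
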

\begin{proof}
  Observing that $\f \colon \alA\rightarrow \alB$ can be thought of as a strict algebra of the 2-monad
  $\TL{\Two, T}$ on $\TL{\Two, \K}$,
  and the identity 2-cell $l'f = \f l$ and its mate $fr \Rightarrow r'\f$ forms an adjunction between $\f$ and $f$ in $\TL{\Two,\K}$,
  it follows from \Cref{lem:adj_induce_lax_alg} that there is an induced lax $\TL{\Two,T}$-algebra structure,
  that is, colax morphism structure on $f$.

  For 2-cells, it suffices to regard a 2-cell in $\alg{s}{T}$ as a strict algebra of the 2-monad $\TL{\Eye, T}$
  to make a similar proof work.
\end{proof}

For later use, let us describe the data of an induced colax morphism.
Given a 1-cell $(f,\f)\colon l\rightarrow l'$,
the 2-cell $\bar{f}$ of the induced colax morphism $(f,\bar{f})$ is the following.
\[
  \begin{tikzpicture}
    \node(lU) at (0,3) {$TX$};
    \node(lu) at (0,2) {$TA$};
    \node(ld) at (0,1) {$A$};
    \node(lD) at (0,0) {$X$};
    \node(rU) at (4,3) {$TY$};
    \node(ru) at (4,2) {$TB$};
    \node(rd) at (4,1) {$B$};
    \node(rD) at (4,0) {$Y$};
    \node(a) at (1.5,1) {$A$};
    \node(y) at (2.5,0) {$Y$};
    \draw[1cell] (lU) to node{$Tf$} (rU);
    \draw[1cell] (lu) to node{$T\f$} (ru);
    \draw[1cell] (a)  to node{$\f$} (rd);
    \draw[1cell] (lD) to node[swap]{$f$} (y);
    \draw[1cell] (lU) to node[swap]{$Tl$} (lu);
    \draw[1cell] (lu) to node[swap]{$a$} (ld);
    \draw[1cell] (ld) to node[swap]{$r$} (lD);
    \draw[1cell] (rU) to node{$Tl'$} (ru);
    \draw[1cell] (ru) to node{$b$} (rd);
    \draw[1cell] (rd) to node{$r'$} (rD);
    \draw[1cell] (lD) to node[swap]{$l$} (a);
    \draw[1cell] (y)  to node{$l'$} (rd);
    \draw[1cell equality] (ld) to (a);
    \draw[1cell equality] (y) to (rD);
    \twocellu[swap]{0.3,0.6}{$\varepsilon$};
    \twocellu{3.7,0.4}{$\eta'$};
  \end{tikzpicture}
\]

Combining \Cref{lem:adj_induce_lax_alg,lem:adj_induce_colax_mor_2-cell},
we define a 2-functor $\I\colon\lladj{T}\rightarrow\lalg{c}{T}$ sending a lax doctrinal adjunction to the induced lax algebra.

Let $l\colon X\rightleftarrows \alA\colon r$ and $l'\colon Y\rightleftarrows \alB\colon r'$ be lax doctrinal adjunctions.
If $(\g,g)$ is a 1-cell $r\rightarrow r'$ in $U/\Id_\K$,
then the identity 1-cell $r'\g = gr$ and its mate $lg\Rightarrow \g l$
forms an adjunction between $\g$ and $g$ in $\TO{\Two, \K}$.
Therefore, $g$ has a canonical structure of a lax morphism, which is the dual of \Cref{lem:adj_induce_colax_mor_2-cell}.

\begin{corollary}\label{cor:induced_doctrinal_adjunction}
  Let $l\colon X\rightleftarrows \alA\colon r$ be a lax doctrinal adjunction.
  Then, each of the following 2-cells makes $l$ a colax morphism and $r$ a lax morphism $\alA\rightsquigarrow\alX$.
  \[
    \begin{tikzpicture}
      \node(lU) at (0,3) {$TX$};
      \node(lu) at (0,2) {$TA$};
      \node(ld) at (0,1) {$A$};
      \node(lD) at (0,0) {$X$};
      \node(rU) at (3,3) {$TA$};
      \node(rD) at (3,0) {$A$};
      \draw[1cell] (lU) to node{$Tl$} (rU);
      \draw[1cell] (lD) to node[swap]{$l$} (rD);
      \draw[1cell] (lU) to node[swap]{$Tl$} (lu);
      \draw[1cell] (lu) to node[swap]{$a$} (ld);
      \draw[1cell] (ld) to node[swap]{$r$} (lD);
      \draw[1cell] (rU) to node{$a$} (rD);
      \draw[1cell equality] (ld) to (rD);
      \twocellu[swap]{0.6,0.4}{$\varepsilon$};
    \end{tikzpicture}
    \quad
    \begin{tikzpicture}
      \node(lU) at (0,3) {$TA$};
      \node(lD) at (0,0) {$A$};
      \node(rU) at (3,3) {$TX$};
      \node(ru) at (3,2) {$TA$};
      \node(rd) at (3,1) {$A$};
      \node(rD) at (3,0) {$X$};
      \draw[1cell] (lU) to node{$Tf$} (rU);
      \draw[1cell] (lD) to node[swap]{$f$} (rD);
      \draw[1cell] (lU) to node[swap]{$a$} (lD);
      \draw[1cell] (rU) to node{$Tl$} (ru);
      \draw[1cell] (ru) to node{$b$} (rd);
      \draw[1cell] (rd) to node{$r$} (rD);
      \draw[1cell equality] (lD) to (rd);
      \twocellu{2.4,0.4}{$\eta$};
    \end{tikzpicture}
  \]
\end{corollary}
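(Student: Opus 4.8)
\emph{Proof plan.} The plan is to read both comparison cells off from \Cref{lem:adj_induce_colax_mor_2-cell} and its dual, by comparing the given lax doctrinal adjunction with the \emph{trivial} one on $\alA$. Write $a$ for the structure map of $\alA=(A,a)$; by \Cref{lem:adj_induce_lax_alg} the induced lax algebra $\alX$ then has structure map $x = r\cdot a\cdot Tl$. Note that the identity adjunction $\id\colon A\rightleftarrows\alA$ is itself a lax doctrinal adjunction, and its induced lax algebra is $\alA$ again, since $\id\cdot a\cdot\id = a$.

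For the colax structure on $l$, I observe that $(l,\id_\alA)$ is a $1$-cell $l\to(\id\colon A\rightleftarrows\alA)$ in $\lladj{T}$: the commutation with left adjoints required by the definition reads $\id_A\cdot l = \id_\alA\cdot l$, which holds on the nose. \Cref{lem:adj_induce_colax_mor_2-cell} therefore equips the underlying $1$-cell $l$ with a colax morphism structure $\alX\rightsquigarrow\alA$. Its $2$-cell is obtained from the explicit description given just after that lemma by specializing $l'=r'=\id$, whence the unit $\eta'$ of the target adjunction is the identity; the entire right-hand region of that pasting diagram collapses, and what survives is precisely the whiskering of the counit $\varepsilon\colon lr\Rightarrow 1_A$ by $a\cdot Tl$, that is, the first displayed cell $\bar l\colon l\cdot x = l\cdot r\cdot a\cdot Tl\Rightarrow a\cdot Tl$.

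Dually, for the lax structure on $r$ I work in the comma $2$-category $U/\Id_\K$, where $(\id_\alA,r)$ is a $1$-cell from the trivial right adjoint $\id\colon A\to A$ to $r$, the condition now being $g\cdot\id_A = r\cdot U(\id_\alA)$, forcing $g=r$. The dual of \Cref{lem:adj_induce_colax_mor_2-cell} recalled in the paragraph preceding the corollary then endows $r$ with a lax morphism structure $\alA\rightsquigarrow\alX$. By the same specialization---this time it is the counit of the trivial adjunction that is an identity---only the unit $\eta$ survives, producing the second displayed cell $\bar r\colon x\cdot Tr\Rightarrow r\cdot a$. As a consistency check, $\bar r$ is exactly the mate of $\bar l$ under the adjunctions $l\dashv r$ and $Tl\dashv Tr$, which is Kelly's doctrinal-adjunction correspondence.

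The $w$-algebra and $w$-morphism axioms need no separate verification, since they are inherited directly from \Cref{lem:adj_induce_lax_alg,lem:adj_induce_colax_mor_2-cell}. The only genuine work is the diagram bookkeeping in the two middle paragraphs: confirming that taking the identity adjunction forces $\eta'$ (respectively the dual counit) to be trivial, and that the surviving part of the general comparison cell coincides with the whiskered $\varepsilon$ (respectively $\eta$) drawn in the statement. This is a routine comparison of pasting diagrams, and is the only place where I expect any friction.
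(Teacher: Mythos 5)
Your overall strategy coincides with the paper's own proof: the paper's entire argument consists of exactly the two squares you describe, exhibiting $(l,\id_\alA)$ as a 1-cell of $\lladj{T}$ into the identity lax doctrinal adjunction $1_A\colon A\rightleftarrows\alA$, and $(\id_\alA,r)$ as a 1-cell of $U/\Id_\K$ out of the corresponding trivial right adjoint, followed by an application of \Cref{lem:adj_induce_colax_mor_2-cell} and its dual. Your treatment of $l$ is correct: with the target adjunction trivial, the unit $\eta'$ in the explicit pasting degenerates and $\bar l=\varepsilon\cdot a\cdot Tl$.

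Your identification of the cell on $r$, however, is wrong, and this is a genuine mathematical error, not bookkeeping. In the dual construction (the paragraph preceding the corollary), the mate $l'g\Rightarrow\g l$ of the identity $r'\g=gr$ is built from the unit of the \emph{source} adjunction and the counit of the \emph{target} adjunction. In your specialization the source is the trivial adjunction and the target is $l\dashv r$, so it is the unit that dies and the counit $\varepsilon$ of $l\dashv r$ that survives: the mate is just $\varepsilon\colon lr\Rightarrow 1_A$, and the induced lax constraint is
\[
  \bar r \;=\; r\cdot a\cdot T\varepsilon
  \;\colon\quad
  x\cdot Tr \;=\; r\cdot a\cdot Tl\cdot Tr \;=\; r\cdot a\cdot T(lr)
  \;\Longrightarrow\; r\cdot a ,
\]
not a cell built from $\eta$. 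Indeed no 2-cell $x\cdot Tr\Rightarrow r\cdot a$ can be manufactured from $\eta$ alone: already for $T=\Id_\K$ the lax monad-morphism structure on a right adjoint is $r\varepsilon\colon rlr\Rightarrow r$, and $\eta$ points the wrong way. Your own consistency check would have exposed this: the mate of $\bar l=\varepsilon\cdot a\cdot Tl$ is $(r\cdot a\cdot T\varepsilon)\circ(r\cdot\varepsilon\cdot a\cdot Tl\cdot Tr)\circ(\eta\cdot x\cdot Tr)$, and the triangle identity $(r\varepsilon)\circ(\eta r)=1_r$ collapses it to $r\cdot a\cdot T\varepsilon$, with no $\eta$ remaining. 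You were presumably misled by the label $\eta$ in the statement's second diagram; that label (like the stray $f$ and $b$ there) appears to be carried over erroneously from the general diagram following \Cref{lem:adj_induce_colax_mor_2-cell}, and the correct cell sits between $Tl\cdot Tr$ and $1_{TA}$, not between the two copies of $A$. In short: both morphism constraints are whiskerings of the counit, and the unit of $l\dashv r$ enters the picture only as the unit constraint $\xi^0$ of the induced lax algebra $\alX$ from \Cref{lem:adj_induce_lax_alg}.
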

\begin{proof}
  It follows from the commutative diagrams below.
  \[
    \begin{tikzpicture}
      \node(dl) at (0,0) {$\alA$};
      \node(dr) at (1,0) {$\alA$};
      \node(ul) at (0,1) {$X$};
      \node(ur) at (1,1) {$A$};
      \draw[1cell equality] (dl) to (dr);
      \draw[1cell] (ul) to node {$l$} (ur);
      \draw[1cell,swap] (ul) to node {$l$} (dl);
      \draw[1cell equality] (ur) to (dr);
    \end{tikzpicture}
    \qquad
    \begin{tikzpicture}
      \node(dl) at (0,0) {$\alA$};
      \node(dr) at (1,0) {$\alA$};
      \node(ul) at (0,1) {$A$};
      \node(ur) at (1,1) {$X$};
      \draw[1cell equality] (dl) to (dr);
      \draw[1cell] (ul) to node {$r$} (ur);
      \draw[1cell,swap] (dr) to node {$r$} (ur);
      \draw[1cell equality] (dl) to (ul);
    \end{tikzpicture}
  \]
\end{proof}

\begin{example}\label{eg:what_is_ldadj_for_lax_functor}
  If the 2-monad $T$ is $\Id_\K$,
  a strict algebra is an object in $\K$,
  and a lax algebra is a monad in $\K$.
  If we define $\mathrm{ladj}(\K)$ as a full sub-2-category of $\K^\rightarrow$ of left adjoints,
  then the 2-functor $\I\colon\lladj{\Id}\rightarrow\lalg{c}{\Id}$ is
  $\mathrm{ladj}(\K)\rightarrow\mnd_c(\K)$ sending an adjunction to the associated monad.

  More generally, for the case of $T_\mathbf{Fun}$, a lax doctrinal adjunction is a pair of
  a 2-functor $F\colon\A\rightarrow \K$ and a family of adjunctions $\widehat{F}A\inlineadj FA$ in $\K$.
  In this case, the 2-functor $\I\colon\lladj{T_\mathbf{Fun}}\rightarrow \LO{\A,\K}$ sends those pointwise adjunctions
  to the lax functors $\widehat{F}\colon\A\rightsquigarrow\K$,
  where $\widehat{F}f$ is defined by $\widehat{F}A\rightarrow FA \xrightarrow{Ff}FB\rightarrow\widehat{F}B$.
  This construction of a lax functor can be found in \cite{Street_1972_two_const_lax_func}.
\end{example}

As we saw in the example above, the 2-functor $\I$ is a generalization of mapping an adjunction to its associated monad.
In formal monad theory, if $\K$ admits certain limits (resp.\@ colimits), we could say the converse:
every monad arises from its Eilenberg-Moore (resp.\@ Kleisli) adjunction.
Now, recalling \Cref{thm:w-barw_classifier_induce_adjunction},
the statement can be viewed as follows: if $\K$ admits certain colimits and $T$ preserve them,
every lax algebra $\alX$ arises from the lax doctrinal adjunction between $\alX$ and its weak morphism classifier $\Q\X$.

By replacing Kleisli adjunctions with the lax doctrinal adjunctions between lax algebras and those weak morphism classifiers,
the following theorem generalizes the embedding $\mnd_c(\K)\hookrightarrow \mathrm{ladj}(\K)$ defined by Kleisli adjunctions.

\begin{theorem}\label{thm:coreflective_embedding_thm}
  If $\K$ admits and $T$ preserves the $(l,r)$-codescent objects,
  $\I\colon\lladj{T}\rightarrow\lalg{c}{T}$ has the fully-faithful left adjoint $\QQ$,
  which maps a lax algebra $\alX$ to the adjunction between $\alX$ and its colax morphism classifier $\Q \alX$.
\end{theorem}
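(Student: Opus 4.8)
The plan is to build the left adjoint $\QQ$ explicitly, to obtain the adjunction itself from a $2$-natural isomorphism of hom-categories assembled from the universal property of the classifier together with a doctrinal mate correspondence, and finally to identify the unit with an identity, which forces full faithfulness.

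First I would define $\QQ$. Given a lax algebra $\alX = (X, x, \xi^2, \xi^0)$, the hypothesis lets me invoke \Cref{thm:w-barw_classifier_induce_adjunction} with $w = l$: the colax morphism classifier $\Q\alX$ exists as a strict algebra, its unit colax morphism $\p = (p,\bar{p})\colon \alX \rightsquigarrow \Q\alX$ has underlying $1$-cell $p = ge$, and $p$ acquires a right adjoint $q\colon \Q X \to X$ in $\K$ characterized by $qg = x$ and $q\bar{g} = \xi^2$, with unit $\xi^0\colon 1 \Rightarrow qp$. Thus $\QQ\alX := (p\colon X\rightleftarrows \Q\alX\colon q)$ is a lax doctrinal adjunction. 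On a colax morphism $(f,\bar{f})\colon \alX \rightsquigarrow \alY$ I set $\QQ(f,\bar{f}) := (f, \Q f)$: since $\Q$ is a $2$-functor, $\Q f\colon \Q\alX \to \Q\alY$ is a strict morphism, and $2$-naturality of the classifier unit gives $\Q f \cdot p_\alX = p_\alY \cdot f$, which is exactly the condition for $(f,\Q f)$ to be a $1$-cell of $\lladj{T}$. The same recipe on $2$-cells, with $2$-functoriality of $\Q$, makes $\QQ$ a $2$-functor.

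Next I would exhibit the adjunction as a $2$-natural isomorphism $\lladj{T}(\QQ\alX, l') \iso \lalg{c}{T}(\alX, \I l')$ for $l' = (l'\colon Y\rightleftarrows \alB\colon r')$, with $\alB$ having structure map $b$. A $1$-cell $\QQ\alX \to l'$ is a pair $(f,\phi)$ with $\phi\colon \Q\alX \to \alB$ strict and $l' f = U\phi \cdot p$. The classifier $2$-adjunction $\Q \dashv \iota$ identifies $\phi$ with a colax morphism $\tilde{\phi} = \iota\phi \circ \p\colon \alX \rightsquigarrow \alB$ whose underlying $1$-cell is $U\phi \cdot p$, so the commuting condition reads $U\tilde{\phi} = l' f$. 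It then remains to see that colax morphisms $\alX \rightsquigarrow \alB$ with underlying $1$-cell of the form $l' f$ correspond bijectively, naturally in the $2$-cells, to colax morphisms $(f,\bar{f})\colon \alX \rightsquigarrow \I l'$. This is a mate correspondence along $l' \dashv r'$: using that $l'$ is colax (\Cref{cor:induced_doctrinal_adjunction}) and that $\I l'$ has structure map $r'b\cdot Tl'$, one passes between $\bar{\phi}\colon l'fx \Rightarrow b\cdot T(l'f)$ and $\bar{f}\colon fx \Rightarrow r'b\cdot Tl'\cdot Tf$ by pasting with the unit and counit of $l'\dashv r'$, and the triangle identities make these mutually inverse. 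Assembling the two identifications yields the isomorphism, and its naturality follows from $2$-functoriality of $\Q$ and of $\I$.

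Finally, since the left adjoint of a $2$-adjunction is fully faithful precisely when the unit is invertible, I would show the unit $\alX \to \I\QQ\alX$ is the identity by computing $\I\QQ\alX$ directly. Its underlying object is $X$ and, by \Cref{lem:adj_induce_lax_alg}, its structure map is $q\cdot \bar{x}\cdot Tp$, where $\bar{x}$ is the strict structure of $\Q\alX$; since $g\colon T\alX \to \Q\alX$ is a strict morphism, $\bar{x}\cdot Tp = \bar{x}\cdot Tg\cdot Te = g\cdot m\cdot Te = g$, so the structure map is $qg = x$. The induced comparison $2$-cells collapse in the same way: the strict cells of $\Q\alX$ are identities, and the components built from the adjunction unit $\xi^0$ and from $q\bar{g} = \xi^2$ return precisely $\xi^0$ and $\xi^2$. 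Hence $\I\QQ\alX = \alX$, the unit is the identity, $\QQ$ is fully faithful, and $\QQ \dashv \I$ is a coreflective embedding.

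The step I expect to be the main obstacle is the mate correspondence in the third paragraph: verifying not merely that the two $2$-cell assignments are inverse bijections, but that they carry the colax-morphism coherence axioms for $\tilde{\phi}\colon \alX \rightsquigarrow \alB$ to those for $(f,\bar{f})\colon \alX \rightsquigarrow \I l'$. This is the genuinely doctrinal content—it relies on $\alB$ being strict and on the interaction of $\bar{\phi}$ with the induced structure $r'b\cdot Tl'$ through the triangle identities—and it is where the bulk of the $2$-cell pasting computations will live.
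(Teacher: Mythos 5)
Your strategy is sound and, suitably completed, does prove the theorem, but it organizes the adjunction differently from the paper. The paper never writes down a hom-isomorphism: it proves $\I\QQ = \Id$ by direct computation (including a lengthy diagram chase showing $\I\QQ(f,\bar{f}) = (f,\bar{f})$ on 1-cells), then builds the counit $(1_X,\morl)$ by applying the classifier's universal property to the canonical colax morphism $(l,\bar{l})$ of \Cref{cor:induced_doctrinal_adjunction}, and verifies the triangle identities. Your route --- the classifier $2$-adjunction composed with a doctrinal mate correspondence along $l'\dashv r'$ --- uses the same ingredients but relocates that work into the verification that the mate bijection restricts to colax morphisms. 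Note that one direction of your step (c) is free: composing with the canonical colax structure on $l'$ is a composite of colax morphisms, and at the level of $2$-cells this composite is exactly the mate. The other direction (the mate of a coherent $\bar{\phi}$ satisfies the colax axioms for $\I l'$) is genuine work, essentially the same pasting argument the paper performs for its counit and triangle identities, using strictness of $\alB$, naturality of $\eta'$, $\varepsilon'$, and a triangle identity; it does go through. Your version buys a real economy: once the hom-isomorphism is established as $2$-natural and the unit components are shown to be identities, $\I\QQ(f,\bar{f}) = (f,\bar{f})$ follows from naturality of the unit, so the paper's longest computation becomes unnecessary. The corresponding cost, which you gloss over with ``naturality follows from $2$-functoriality of $\Q$ and of $\I$,'' is that $2$-naturality in the variable $l'$ is itself a mate-calculus check (compatibility of mates with the induced colax structure of \Cref{lem:adj_induce_colax_mor_2-cell}), and the $2$-cell level of the correspondence needs the $2$-dimensional universal property of the classifier.

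There is one concrete logical gap: the full-faithfulness step. Computing $\I\QQ\alX = \alX$ does not by itself show that the unit is the identity, or even invertible; the unit is the image of $1_{\QQ\alX}$ under your hom-isomorphism, so you must trace $(1_X, 1_{\Q\alX})$ through steps (b) and (c). Step (b) sends it to the classifier unit $\p = (p,\bar{p})$, whose $2$-cell is $\bar{p} = \bar{g}\cdot e_{TX}$; step (c) takes its mate along $p\dashv q$, which, using $qg = x$, $q\bar{g} = \xi^2$ and the unit $\xi^0$, is the composite $(\xi^2\cdot e_{TX})\circ(\xi^0\cdot x)\colon x\Rightarrow x$. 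That this equals $1_x$ is precisely one of the lax-algebra unit axioms for $\alX$ --- a short but indispensable computation absent from your write-up (it is the counterpart of the paper's verification that $\morl$ is the identity for $p\colon\alX\rightleftarrows\Q\alX$). With that check added, and the mate-coherence verification of step (c) actually carried out, your proof is complete.
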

\begin{proof}
  \Cref{thm:w-barw_classifier_induce_adjunction} shows that $p^\alX\colon X\rightleftarrows\Q X\colon q^\alX$
  is a lax doctrinal adjunction for each lax algebra $\alX$.
  If $(f,\bar{f})$ is an oplax morphism $(f,\bar{f})\colon \alX\rightsquigarrow\alY$,
  the strict morphism $\Q\f\colon\Q \alX\rightarrow\Q \alY$ is defined by the unique map obtained from the universality of $\Q\alX$.
  \[
    \begin{tikzpicture}[baseline=(m.base)]
      \node(L) at (-2, 0) {$T^2\alX$};
      \node(ul) at (-1, 1.7) {$T\alX$};
      \node(ur) at (1, 1.7) {$T\alY$};
      \node(R) at (2, 0) {$\Q\alY$};
      \node(dl) at (-1,-1.7) {$T\alX$};
      \node(dr) at (1,-1.7) {$T\alY$};
      \node(m) at (0,0) {$T^2\alY$};
      \draw[1cell] (L) to node {$Tx$} (ul);
      \draw[1cell] (ul) to node {$Tf$}(ur);
      \draw[1cell] (ur) to node {$\g$} (R);
      \draw[1cell,swap] (L) to node {$m$} (dl);
      \draw[1cell,swap] (dl) to node {$Tf$} (dr);
      \draw[1cell,swap] (dr) to node {$\g$}(R);
      \draw[1cell,swap] (L) to node {$T^2f$} (m);
      \draw[1cell,swap,near end] (m) to node {$Ty$} (ur);
      \draw[1cell,swap] (m) to node {$m$} (dr);
      \twocelld{-0.5,0.8}{$T\bar{f}$}
      \twocelld{1,0}{$\bar{g}$}
    \end{tikzpicture}
    \quad=\quad
    \begin{tikzpicture}[baseline=(m.base)]
      \node(L) at (-2, 0) {$T^2\alX$};
      \node(ul) at (-1, 1.7) {$T\alX$};
      \node(ur) at (1, 1.7) {$T\alY$};
      \node(R) at (2, 0) {$\Q\alY$};
      \node(dl) at (-1,-1.7) {$T\alX$};
      \node(dr) at (1,-1.7) {$T\alY$};
      \node(m) at (0,0) {$\Q\alX$};
      \draw[1cell] (L) to node {$Tx$} (ul);
      \draw[1cell] (ul) to node {$Tf$}(ur);
      \draw[1cell] (ur) to node {$\g$} (R);
      \draw[1cell,swap] (L) to node {$m$} (dl);
      \draw[1cell,swap] (dl) to node {$Tf$} (dr);
      \draw[1cell,swap] (dr) to node {$\g$}(R);
      \draw[1cell] (ul) to node {$\g$} (m);
      \draw[1cell,swap] (dl) to node {$\g$} (m);
      \draw[1cell,dashed] (m) to node {$\Q\f$} (R);
      \twocelld{-1,0}{$\bar{g}$}
    \end{tikzpicture}
  \]
  The diagram below shows that $\QQ(f,\bar{f})\coloneqq (f,\f)$ is a morphism of lax doctrinal adjunctions.
  \[
    \begin{tikzpicture}
      \node(l)  at (0,2) {$X$};
      \node(r)  at (2,2) {$Y$};
      \node(ul) at (0,1) {$TX$};
      \node(ur) at (2,1) {$TY$};
      \node(dl) at (0,0) {$\Q X$};
      \node(dr) at (2,0) {$\Q Y$};
      \draw[1cell] (l) to node {$f$} (r);
      \draw[1cell] (ul) to node {$Tf$} (ur);
      \draw[1cell] (dl) to node {$\Q\f$} (dr);
      \draw[1cell,swap,bend right=50] (l) to node {$p^\alX$} (dl);
      \draw[1cell] (l) to node {$e$} (ul);
      \draw[1cell] (ul) to node {$g$} (dl);
      \draw[1cell,bend left=50] (r) to node {$p^\alY$} (dr);
      \draw[1cell,swap] (r) to node {$e$} (ur);
      \draw[1cell,swap] (ur) to node {$g$} (dr);
    \end{tikzpicture}
  \]
  If $\alpha\colon (f,\bar{f})\rightarrow (f',\bar{f}')$ is a 2-cell in $\lalg{c}{T}$,
  then one can show that $g.T\alpha$ defines a modification between cocones,
  so the 2-dimensional universality of the codescent object shows that there exists a 2-cell $\Q\alpha\colon\Q\f\Rightarrow\Q\f'$
  satisfying $\Q\alpha.g = g.T\alpha$, making $\QQ\alpha\coloneqq (\alpha,\Q\alpha)$ a 2-cell of $\lladj{T}$.

  The above correspondence defines a 2-functor $\QQ\colon\lalg{c}{T}\rightarrow\lladj{T}$.
  We next show that the composite $\I\QQ$ is the identity.
  To this end, let us first calculate the data of $\I\QQ\alX$ for a lax algebra $\alX$.
  The structure 1-cell of $\I\QQ\alX$ is defined by $q.\Q x.Tg.Te$.
  Since $\g$ is a strict morphism $T\alX\rightarrow\Q\alX$, we have $\Q x.Tg = g.m$, therefore $q.\Q x.Tg.Te = q.g.m.Te = x$.
  Let $\eta$ and $\varepsilon$ be the unit and the counit of the lax doctrinal adjunction $X\rightleftarrows \Q\alX$.
  The two coherence 2-cells are defined by
  $\eta$ and $q.\Q x.T\varepsilon.T\Q x.T^2 p$.
  By definition, $\eta$ is equal to $\xi^0$,
  so what remains to prove is that the latter is equal to $\xi^2$.
  Since there is equality of 1-cells $T\Q x.T^2p = T\Q x.T^2g.T^2e = Tg.Tm.T^2e = Tg$
  and equality of 2-cells $\varepsilon.g = \bar{g}.eT$,
  it suffices to show $q.\Q x.T\bar{g}.TeT$ is equal to $\xi^2$.
  Because $\bar{g}$ is a 2-cell in $\alg{s}{T}$,
  we have $\Q x.T\bar{g} = \bar{g}. mT$.
  Therefore, $q.\Q x.T\bar{g}.TeT = q.\bar{g}. mT.TeT = \xi^2$.

  Let $(f,\bar{f})$ be a colax morphism $\alX\rightsquigarrow \alY$
  and $\f \coloneqq \Q (f,\bar{f})$ be the strict morphism $\Q\alX\rightarrow \Q\alY$.
  The 2-cell of the colax morphism $\I\QQ(f,\bar{f})$ is as follows.
  \[
    \begin{tikzpicture}
      \node(lU) at (0,3) {$TX$};
      \node(lu) at (0,2) {$\Q A$};
      \node(lD) at (0,0) {$X$};
      \node(rU) at (6,3) {$TY$};
      \node(ru) at (6,2) {$\Q Y$};
      \node(rD) at (6,0) {$Y$};
      \node(TX) at (1.5,1) {$T X$};
      \node(Q) at (3,2) {$\Q X$};
      \node(TY) at (4.5,1) {$TY$};
      \node(Y) at (3,0) {$Y$};
      \draw[1cell] (lU) to node{$Tf$} (rU);
      \draw[1cell equality] (lu) to (Q);
      \draw[1cell] (Q) to node{$\f$} (ru);
      \draw[1cell] (lD) to node[swap]{$f$} (Y);
      \draw[1cell equality] (Y) to (rD);
      \draw[1cell] (TX) to node{$Tf$} (TY);
      \draw[1cell] (lU) to node[swap]{$g$} (lu);
      \draw[1cell,swap] (lu) to node {$q$} (lD);
      \draw[1cell] (rU) to node{$g$} (ru);
      \draw[1cell] (ru) to node{$q$} (rD);
      \draw[1cell] (lD) to node[swap]{$e$} (TX);
      \draw[1cell] (TX) to node{$g$} (Q);
      \draw[1cell] (Y) to node {$e$} (TY);
      \draw[1cell] (TY) to node[swap]{$g$} (ru);
      \twocellu[swap]{0.6,1.2}{$\varepsilon$};
      \twocellu{5.3,0.6}{$\eta$};
    \end{tikzpicture}
  \]
  We can calculate the left part of this 2-cell as follows.
  \begin{align*}
      &
    \begin{tikzpicture}[baseline={(0,-0.1)}]
      \node(L) at (-2.5,0) {$TX$};
      \node(l) at (-0.9,0) {$\Q X$};
      \node(r) at (0.9,0) {$\Q X$};
      \node(R) at (2.5,0) {$\Q Y$};
      \draw[1cell] (L) to node {$g$} (l);
      \draw[1cell,bend left] (l) to node {$pq$} (r);
      \draw[1cell equality, bend right] (l) to (r);
      \draw[1cell] (r) to node {$\f$} (R);
      \twocelld{-0.05,0}{$\varepsilon$}
    \end{tikzpicture}
    \ =\
    \begin{tikzpicture}[baseline={(0,-0.1)}]
      \node(L) at (-2.5,0) {$TX$};
      \node(l) at (-0.9,0) {$T^2X$};
      \node(r) at (0.9,0) {$\Q X$};
      \node(R) at (2.5,0) {$\Q Y$};
      \draw[1cell] (L) to node {$eT$} (l);
      \draw[1cell,bend left] (l) to node {$g.Tx$} (r);
      \draw[1cell,bend right,swap] (l) to node {$g.m$} (r);
      \draw[1cell] (r) to node {$\f$} (R);
      \twocelld{-0.05,0}{$\bar{g}$}
    \end{tikzpicture}
    \\
    = & \
    \begin{tikzpicture}[baseline={(0,0.6)}]
      \node(ur) at (1.7,1.5) {$\Q Y$};
      \node(um) at (0,1.5) {$TY$};
      \node(ul) at (-1.7,1.5) {$TX$};
      \node(dr) at (1.7,0) {$\Q Y$};
      \node(dm) at (0,0) {$T^2Y$};
      \node(dl) at (-1.7,0) {$T^2X$};
      \node(L) at (-3.4,0) {$TX$};
      \draw[1cell,swap] (dr) to node {$g$} (ur);
      \draw[1cell] (dl) to node {$Tx$} (ul);
      \draw[1cell] (um) to node {$g$} (ur);
      \draw[1cell] (ul) to node {$Tf$} (um);
      \draw[1cell,swap] (dl) to node {$T^2f$} (dm);
      \draw[1cell,swap] (dm) to node {$m$} (dr);
      \draw[1cell] (dm) to node[auto=false,fill=white]{$Ty$} (um);
      \draw[1cell] (L) to node[swap]{$eT$} (dl);
      \twocelld{-1.1,0.75}{$T\bar{f}$}
      \twocelld{ 0.8,0.75}{$\bar{g}$}
    \end{tikzpicture}
    \ =\
    \begin{tikzpicture}[baseline={(0,0.6)}]
      \node(UL) at (-3.4,1.5) {$X$};
      \node(ul) at (-1.7,1.5) {$Y$};
      \node(um) at (0,1.5) {$TY$};
      \node(ur) at (1.7,1.5) {$\Q Y$};
      \node(DL) at (-3.4,0) {$TX$};
      \node(dl) at (-1.7,0) {$TY$};
      \node(dm) at (0,0) {$T^2Y$};
      \node(dr) at (1.7,0) {$TY$};
      \draw[1cell] (DL) to node{$x$} (UL);
      \draw[1cell] (dl) to node [auto=false,fill=white]{$y$} (ul);
      \draw[1cell] (dm) to node[auto=false,fill=white]{$Ty$} (um);
      \draw[1cell,swap] (dr) to node {$g$} (ur);
      \draw[1cell] (UL) to node {$f$} (ul);
      \draw[1cell] (ul) to node {$e$} (um);
      \draw[1cell] (um) to node {$g$} (ur);
      \draw[1cell,swap] (DL) to node {$Tf$} (dl);
      \draw[1cell,swap] (dl) to node {$e$} (dm);
      \draw[1cell,swap] (dm) to node {$m$} (dr);
      \twocelld{-2.7,0.75}{$\bar{f}$}
      \twocelld{ 0.8,0.75}{$\bar{g}$}
    \end{tikzpicture}
  \end{align*}
  Therefore, the entire 2-cell is equal to $\bar{f}$ by the following.
  \[
    \begin{tikzpicture}[baseline={(0,1)}]
      \node(UL) at (-3,1.5) {$X$};
      \node(ul) at (-1.5,1.5) {$Y$};
      \node(um) at (0,1.5) {$TY$};
      \node(ur) at (1.5,1.5) {$\Q Y$};
      \node(DL) at (-3,0) {$TX$};
      \node(dl) at (-1.5,0) {$TY$};
      \node(dm) at (0,0) {$T^2Y$};
      \node(dr) at (1.5,0) {$TY$};
      \node(R) at (2,2.5) {$Y$};
      \draw[1cell] (DL) to node{$x$} (UL);
      \draw[1cell] (dl) to node [auto=false,fill=white]{$y$} (ul);
      \draw[1cell] (dm) to node[auto=false,fill=white]{$Ty$} (um);
      \draw[1cell,swap] (dr) to node {$g$} (ur);
      \draw[1cell] (UL) to node {$f$} (ul);
      \draw[1cell] (ul) to node {$e$} (um);
      \draw[1cell] (um) to node {$g$} (ur);
      \draw[1cell,swap] (DL) to node {$Tf$} (dl);
      \draw[1cell,swap] (dl) to node {$e$} (dm);
      \draw[1cell,swap] (dm) to node {$m$} (dr);
      \draw[1cell,swap] (ur) to node {$q$} (R);
      \draw[1cell equality,bend left] (ul) to (R);
      \twocelld{-2.4,0.75}{$\bar{f}$}
      \twocelld{ 0.7,0.75}{$\bar{g}$}
      \twocelld{0.5,2.2}{$\eta$};
    \end{tikzpicture}
    \ =\
    \begin{tikzpicture}[baseline={(0,1)}]
      \node(UL) at (-3,1.5) {$X$};
      \node(ul) at (-1.5,1.5) {$Y$};
      \node(um) at (0,1.5) {$TY$};
      \node(ur) at (2,2.5) {$Y$};
      \node(DL) at (-3,0) {$TX$};
      \node(dl) at (-1.5,0) {$TY$};
      \node(dm) at (0,0) {$T^2Y$};
      \node(dr) at (1.5,0) {$TY$};
      \draw[1cell] (DL) to node{$x$} (UL);
      \draw[1cell] (dl) to node [auto=false,fill=white]{$y$} (ul);
      \draw[1cell] (dm) to node[auto=false,fill=white]{$Ty$} (um);
      \draw[1cell,swap] (dr) to node {$y$} (ur);
      \draw[1cell] (UL) to node {$f$} (ul);
      \draw[1cell] (ul) to node {$e$} (um);
      \draw[1cell,swap] (um) to node {$y$} (ur);
      \draw[1cell,swap] (DL) to node {$Tf$} (dl);
      \draw[1cell,swap] (dl) to node {$e$} (dm);
      \draw[1cell,swap] (dm) to node {$m$} (dr);
      \draw[1cell equality,bend left] (ul) to (ur);
      \twocelld{-2.4,0.75}{$\bar{f}$}
      \twocelld{ 0.8,0.9}{$\xi^2$}
      \twocelld{0.5,2.2}{$\xi^0$};
    \end{tikzpicture}
    \ =\
    \begin{tikzpicture}[baseline={(0,0.7)}]
      \node(UL) at (-3,1.5) {$X$};
      \node(ul) at (-1.5,1.5) {$Y$};
      \node(DL) at (-3,0) {$TX$};
      \node(dl) at (-1.5,0) {$TY$};
      \draw[1cell] (DL) to node{$x$} (UL);
      \draw[1cell,swap] (dl) to node {$y$} (ul);
      \draw[1cell] (UL) to node {$f$} (ul);
      \draw[1cell,swap] (DL) to node {$Tf$} (dl);
      \twocelld{-2.4,0.75}{$\bar{f}$}
    \end{tikzpicture}
  \]
  For 2-cells, $\I\QQ\alpha = \I(\alpha,\bar{\alpha}) = \alpha$ by definition.

  Finally, we show that $\QQ$ is left adjoint to $\I$ with the identity unit.
  In \Cref{cor:induced_doctrinal_adjunction}, we observed the existence of a canonical colax morphism $(l,\bar{l})\colon \alX\rightarrow\alA$ for each lax doctrinal adjunction $l\colon X\rightleftarrows \alA$.
  From the universality of the colax morphism classifier,
  we obtain a strict morphism $\morl\colon \Q\alX \rightarrow \alA$ satisfying $l = \morl\,p$.
  \[
    \begin{tikzpicture}
      \node(dl) at (0,0) {$\Q\alX$};
      \node(dr) at (1,0) {$\alA$};
      \node(ul) at (0,1) {$X$};
      \node(ur) at (1,1) {$X$};
      \draw[1cell equality] (ul) to (ur);
      \draw[1cell,swap] (dl) to node {$\morl$} (dr);
      \draw[1cell] (ur) to node {$l$} (dr);
      \draw[1cell,swap] (ul) to node{$p$} (dl);
    \end{tikzpicture}
  \]
  To complete the proof, it suffices to show this $(1_X, \morl)$ defines the counit,
  that is, $\I (1_X,\morl) = 1_\alX$ and
  $\morl$ is the identity for $p\colon\alX\rightleftarrows\Q \alX$.
  Since the strict morphism $\morl$ for $p\colon\alX\rightleftarrows\Q \alX$ is defined by the unique map
  satisfying $\morl.\bar{g} = \Q x.T(\varepsilon.\Q x.Tp)$,
  the equality $\Q x.T(\varepsilon.\Q x.Tp) = \Q x.T(\varepsilon.g) = \Q x.T\bar{g}.TeT = \bar{g}.mT.TeT = \bar{g}$
  proves that this $\morl$ is the identity.
  To see $\I(1_X,\morl) = 1$, we need to check that the following is the identity.
  \[
    \begin{tikzpicture}
      \node(lU) at (0,3) {$TX$};
      \node(ld) at (0,1) {$\Q X$};
      \node(lD) at (0,0) {$X$};
      \node(rU) at (4,3) {$TX$};
      \node(ru) at (4,2) {$TA$};
      \node(rd) at (4,1) {$A$};
      \node(rD) at (4,0) {$X$};
      \node(a) at (2,1) {$\Q X$};
      \node(y) at (2,0) {$X$};
      \draw[1cell equality] (lU) to (rU);
      \draw[1cell] (a)  to node{$\morl$} (rd);
      \draw[1cell equality] (lD) to (y);
      \draw[1cell] (lU) to node[swap]{$g$} (ld);
      \draw[1cell] (ld) to node[swap]{$q$} (lD);
      \draw[1cell] (rU) to node{$Tl$} (ru);
      \draw[1cell] (ru) to node{$a$} (rd);
      \draw[1cell] (rd) to node{$r$} (rD);
      \draw[1cell] (lD) to node[auto=false,fill=white,swap]{$p$} (a);
      \draw[1cell] (y)  to node[auto=false,fill=white]{$l$} (rd);
      \draw[1cell equality] (ld) to (a);
      \draw[1cell equality] (y) to (rD);
      \twocellu[swap]{0.3,0.6}{$\varepsilon'$};
      \twocellu{3.7,0.4}{$\eta$};
    \end{tikzpicture}
  \]
  By the construction of $\morl$, it satisfies $\morl.\bar{g} = a.T\bar{l}$.
  So we can calculate as $\morl.\varepsilon'.g = \morl.\bar{g}.eT = a.T\bar{l}.eT = a.e.\bar{l} = \bar{l}$.
  Therefore, the 2-cell of the colax morphism $\I(1_X,\morl)$ is
  the vertical composition of $q.g.\eta = \eta.r.a.Tl$ and $r.\bar{l} = r.\varepsilon.a.Tl$, which is the identity.
\end{proof}

We also have the oplax counterpart of \Cref{thm:coreflective_embedding_thm}:
\begin{itemize}
  \item
        If $\K$ admits and $T$ preserves $(c,l)$-codescent objects,
        $\calg{l}{T}$ is a coreflective sub-2-category of
        \emph{colax doctrinal adjunctions} $r\colon\alA\leftrightarrows X$,
        which is a full sub-2-category of $U/\Id_\K$.
  \item If $\K$ admits and $T$ preserves $(p,p)$-codescent objects,
        $\palg{p}{T}$ is a coreflective sub-2-category of
        \emph{pseudo doctrinal adjunctions} $X\eqv\alA$.
\end{itemize}

Notice that even if $\alX$ is a pseudo algebra,
the adjunction between its colax morphism classifier $\Q_c\alX$
does not need to be an equivalence.

In \cite{Lack_2002_cod_obj_coh}, Lack declared that for a 2-monad $T$,
``the coherence theorem for pseudo-$T$-algebras'' should have the following statement.
\begin{quote}
  \textbf{Theorem-Schema.}
  \textit{
    The inclusion $\alg{s}{T} \rightarrow \palg{p}{T}$ has a left adjoint,
    and the component of the unit are equivalences in $\palg{p}{T}$.
  }
\end{quote}
For lax algebras, we expect that the statement in \Cref{thm:w-barw_classifier_induce_adjunction,thm:coreflective_embedding_thm}
should be the coherence condition of a 2-monad $T$ for lax-$T$-algebras.
That is,

\begin{schema}
  The inclusion $\alg{s}{T}\rightarrow\lalg{c}{T}$ has a left adjoint $\Q$,
  and the component of the unit have right adjoints in $\K$.
  Thus, there is a 2-functor $\QQ\colon \lalg{c}{T} \rightarrow \lladj{T}$,
  and it defines a fully-faithful left 2-adjoint of $\I\colon\lladj{T}\rightarrow\lalg{c}{T}$.
\end{schema}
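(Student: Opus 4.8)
The plan is to re-run the proof of \Cref{thm:coreflective_embedding_thm}, but with the two explicit inputs supplied there by the codescent construction replaced by the abstract data postulated in the schema. The only facts about $\Q$ needed below are that it is a left $2$-adjoint to the inclusion $\alg{s}{T}\rightarrow\lalg{c}{T}$—equivalently, that each unit $\p^\alX\colon\alX\rightsquigarrow\Q\alX$ is a \emph{universal} colax morphism into a strict algebra, giving a $2$-natural isomorphism $\lalg{c}{T}(\alX,\alB)\iso\alg{s}{T}(\Q\alX,\alB)$ for every strict $\alB$—together with the hypothesis that the underlying $1$-cell $p\colon X\rightarrow\Q X$ of each $\p^\alX$ admits a right adjoint $q$ in $\K$. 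This last datum makes $p\colon X\rightleftarrows\Q\alX$ a lax doctrinal adjunction, so $\QQ\alX\coloneqq(p\colon X\rightleftarrows\Q\alX)$ is defined on objects exactly as before, and the isomorphism of hom-categories is what replaces the one- and two-dimensional universality of the codescent object throughout.

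First I would define $\QQ$ on cells. Given a colax morphism $(f,\bar f)\colon\alX\rightsquigarrow\alY$, $2$-functoriality of $\Q$ produces a strict morphism $\f\coloneqq\Q(f,\bar f)\colon\Q\alX\rightarrow\Q\alY$, and $2$-naturality of $\p$ gives $\f\,\p^\alX=\p^\alY(f,\bar f)$, whose underlying equation in $\K$ is precisely the condition for $(f,\f)$ to be a $1$-cell of $\lladj{T}$; on $2$-cells one applies $\Q$ and reads off the required modification. All of this is formal, using only that $\Q$ is a $2$-functor and $\p$ a $2$-natural transformation.

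The substance is verifying that $\I\QQ$ is the identity, i.e. that the lax algebra induced on $X$ by the adjunction $p\dashv q$ (via \Cref{lem:adj_induce_lax_alg}, with $\Q\alX$ strict so its structure constraints are trivial) is again $\alX$. Its induced structure map is $q.a.Tp$, where $a$ is the action of $\Q\alX$, and its induced unit constraint $\xi'^{0}$ is the unit $\eta$ of $p\dashv q$; the colax constraint $\bar p\colon p.x\Rightarrow a.Tp$ together with the triangle equalities yields at least a comparison $2$-cell $x\Rightarrow q.a.Tp$. Pinning this comparison to an equality is the main obstacle. The concrete proof discharged it through the identities $\Q x.Tg=g.m$, $qg=x$ and $\varepsilon.g=\bar g.eT$ coming from the cocone $(g,\bar g)$ and the factorisation $p=ge$, none of which survive abstractly; the abstract substitute is to argue that, because $\p^\alX$ is the universal colax morphism out of $\alX$, the canonical colax structure that \Cref{cor:induced_doctrinal_adjunction} places on $p\colon\I\QQ\alX\rightsquigarrow\Q\alX$ must agree with $\bar p$, which forces $\xi'^{2}=\xi^{2}$, $\xi'^{0}=\xi^{0}$ and $q.a.Tp=x$. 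If only invertibility of the comparison can be extracted from the counit of $\Q\dashv(\text{incl})$ and the triangle identities of $p\dashv q$, this still suffices for the asserted fully-faithfulness, the identity unit of \Cref{thm:coreflective_embedding_thm} being the sharper statement available under the codescent presentation.

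Finally I would construct the counit and conclude $\QQ\dashv\I$. For a lax doctrinal adjunction $l\colon Z\rightleftarrows\alA$ with induced lax algebra $\I(l)$ on $Z$, \Cref{cor:induced_doctrinal_adjunction} exhibits $l\colon\I(l)\rightsquigarrow\alA$ as a colax morphism; universality of the unit at $\I(l)$ then yields a unique strict $\morl\colon\Q\I(l)\rightarrow\alA$ with $\morl\,\p=l$, and $(1_Z,\morl)\colon\QQ\I(l)\rightarrow(l\colon\alA)$ is the counit, whose triangle identities reduce, exactly as in \Cref{thm:coreflective_embedding_thm}, to the defining equation of $\morl$ and the mate correspondence between the colax structure on $l$ and the lax structure on its right adjoint. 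An alternative and perhaps cleaner route is to build the hom-isomorphism $\lladj{T}(\QQ\alX,(l\colon\alA))\iso\lalg{c}{T}(\alX,\I(l\colon\alA))$ directly, as the composite of the unit-universality bijection (strict $\Q\alX\rightarrow\alA$ versus colax $\alX\rightsquigarrow\alA$) with Kelly's doctrinal mate bijection induced by $l\dashv r$ (colax $\alX\rightsquigarrow\alA$ versus colax $\alX\rightsquigarrow\I(l)$, using that $l$ is colax, $r$ is lax, and colax morphisms compose); fully-faithfulness of $\QQ$ then follows by specialising $(l\colon\alA)$ to $\QQ\alX'$, and the residual difficulty is again exactly the matching of the colax-coherence axioms ($\xi^{2},\xi^{0}$) across the mate bijection.
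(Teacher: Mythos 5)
This statement is a theorem-schema in Lack's sense, not a theorem: the paper never proves it abstractly, and its substantiation is exactly \Cref{thm:w-barw_classifier_induce_adjunction,thm:coreflective_embedding_thm}, which establish both sentences of the schema \emph{together} under the hypothesis that $\K$ admits and $T$ preserves the relevant codescent objects. Your proposal reads the word ``Thus'' as a formal implication and tries to derive the coreflective embedding from the bare data of the first sentence (existence of $\Q$, plus right adjoints in $\K$ for the unit components). Parts of that programme are fine: $\QQ$ is a well-defined 2-functor by 2-functoriality of $\Q$ and 2-naturality of $\p$, and even the 2-adjunction $\QQ\dashv\I$ can be produced abstractly along your ``alternative route'' (factor a colax morphism $\alX\rightsquigarrow\I(l)$ through $\alA$ using \Cref{cor:induced_doctrinal_adjunction} and the universality of $\p$; the triangle identities of $l\dashv r$ make the two hom-assignments mutually inverse).

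The genuine gap is fully-faithfulness, and the patch you propose for it does not work. You argue that, because $\p^\alX$ is the universal colax morphism out of $\alX$, the structure induced on $X$ by $p\dashv q$ ``must agree'' with $(x,\xi^2,\xi^0)$. But the universal property of $\p^\alX$ only concerns colax morphisms into \emph{strict} algebras, and $\I\QQ\alX$ is a lax algebra that is not strict, so universality says nothing about it: it can force neither the equality $q.a.Tp = x$ (where $a$ denotes the action of $\Q\alX$) nor even the invertibility of the canonical comparison $x\Rightarrow q.a.Tp$, which is the mate of $\bar{p}$ under $p\dashv q$. In the paper these facts are not formal consequences of the adjunction between $\lalg{c}{T}$ and $\alg{s}{T}$ at all; they come from the codescent \emph{presentation} of $\Q\alX$ in $\K$ --- the identities $qg = x$, $\Q x.Tg = g.m$ and $\varepsilon.g = \bar{g}.eT$ supplied by the one- and two-dimensional universality of $\Q X$ as a colimit in $\K$ --- which is exactly the information your abstract hypotheses discard. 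Since a right adjoint $q$ is determined only up to isomorphism, the equality $q.a.Tp = x$ cannot even be expected abstractly, and nothing in the hypotheses excludes a 2-monad for which $\Q$ exists, every $p^\alX$ has some right adjoint, and yet the comparison is not invertible. So your argument yields (at best) the adjunction $\QQ\dashv\I$, but not that $\QQ$ is fully faithful; closing that gap is precisely the role of the codescent hypotheses in \Cref{thm:w-barw_classifier_induce_adjunction,thm:coreflective_embedding_thm}.
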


\begin{example}
  Let $\A$ be a small 2-category and $\K$ be a cocomplete 2-category.
  As we observed in \Cref{eg:what_is_ldadj_for_lax_functor},
  $\lladj{T^\mathbf{Fun}}$ is a 2-category whose objects are
  2-functors $F\colon\A\rightarrow \K$ with adjunctions $\widehat{F}A\inlineadj FA$ in $\K$.
  \Cref{thm:coreflective_embedding_thm} shows that $\TO{\A,\K}$ is a coreflective sub-2-category of $\lladj{T^\mathbf{Fun}}$.

  Dually, if $\A$ is a small 2-category and $\K$ is a complete 2-category,
  $\LL{\A,\K}$ is a reflective sub-2-category of
  2-functors $F\colon\A\rightarrow\K$ with right adjoints
  $FA\rightarrow \widehat{F}A$ for each $A\in\A$.
  The reflection is given by the lax morphism coclassifier $\R_l$.
\end{example}

%

\section{Gray tensor product and distributive law}\label{sec:Gray_dist}
Our goal in this section is to generalize distributive laws
of monads to lax functors between 2-categories
and prove the counterpart of Beck's theorem of distributive laws.

\subsection{Gray tensor product}\label{subsec:Gray_tensor_prod}

Let $\alpha\colon K\rightarrow H$ be a lax transformation between lax functors $K,H\colon\A\rightarrow\B$.
If $F\colon \X \rightarrow\A$ is another lax functor, a composition $\alpha F$ defines a lax transformation.
However, as we saw in \Cref{rem:ww_is_not_2-functorial_in_first_variable}, post-composition $G\alpha$ of a lax functor $G\colon \B \rightarrow\Y$
does not always define a lax transformation
since the 2-cells in the following diagram are not composable.
\[
  \begin{tikzpicture}
    \node(GKA) at (0,2) {$GKA$};
    \node(GKA') at (0,0) {$GKA'$};
    \node(GHA) at (4,2) {$GHA$};
    \node(GHA') at (4,0) {$GHA'$};
    \draw[1cell,swap] (GKA) to node {$GKf$} (GKA');
    \draw[1cell] (GHA) to node {$GHf$} (GHA');
    \draw[1cell] (GKA) to node {$G\alpha_A$} (GHA);
    \draw[1cell,swap] (GKA') to node {$G\alpha_{A'}$} (GHA');
    \draw[1cell,bend right=17,swap,sloped] (GKA) to node [auto=false,fill=white] {$G(\alpha_{A'}.K\!f)$} (GHA');
    \draw[1cell,bend left=17, sloped] (GKA)      to node [auto=false,fill=white] {$G(H\!f.\alpha_A)$} (GHA');
    \twocelldl[]{3.1,1.5}{$G^2$}
    \twocellur[swap]{0.9,0.5}{$G^2$}
    \twocelldl[]{1.85,1.05}{$G\alpha_f$}
  \end{tikzpicture}
\]
One of the sufficient conditions for $F\alpha$ to become a lax transformation $GK\rightarrow GH$ is
that $G$ is a 2-functor, which says that two 2-cells $G^2$ in the diagram above are both identities.

In \cite{Gray_1974_formal_caty_theor}, Gray first extended the category $\TwoCat_0$ to a multicategory
with objects as 2-categories and arrows $\A_1,\dots,\A_n\rightarrow \B$ as 2-functors
in $\TL{\A_n,\TL{\dots,\TL{\A_1,\B}\dots}}$.
(He named those 2-functors as \emph{quasi-functors}
and denoted the hom-sets by $\text{q-fun}(\prod_{i} \A_i, \B)$,
which would be better to be written in modern multicategorical notation such as $\TL{\A_1,\dots,\A_n; \B}$.)
He then showed that the composite $\alpha F$ and $G\alpha$ above extends to a 2-functor
$\TL{\B,\C}\rightarrow\TL{\TL{\A,\B},\TL{\A,\C}}$, and hence $\TL{-,-}$ makes $\TwoCat_0$ a closed category.
Moreover, he finally showed that there is a tensor product $\A \otimes_l \B$ of 2-categories,
which we call a \emph{lax Gray tensor product}, making $(\TwoCat_0,\otimes_l,\One,\TL{-,-})$ a non-symmetric monoidal biclosed category.


Now, let us review the construction of $\A\otimes_l\B$.
\begin{definition}
  Let $\A$ and $\B$ be 2-categories.
  We first define a computad $\cG$ by the following data.
  \begin{itemize}
    \item A node of the quiver ${\left|\cG\right|}$ is a pair of objects $A\in\A$ and $B\in\B$ denoted by $A\otimes B$.
    \item An edge of ${\left|\cG\right|}$ has the form of either $f\otimes B \colon A\otimes B \rightarrow A'\otimes B$ or
          $A\otimes g \colon A\otimes B \rightarrow A\otimes B'$, where $f\colon A\rightarrow A'$ is a 1-cell of $\A$, and $g\colon B\rightarrow B'$ is a 1-cell of $\B$.
    \item There are three types of 2-cells in $\cG$:
          \begin{itemize}
            \item for each $\alpha\colon f\Rightarrow f'\in\A$ and $B\in\B$, a 2-cell $\alpha\otimes B \colon f\otimes B \Rightarrow f'\otimes B$,
            \item for each $\beta\colon g\Rightarrow g'\in\B$ and $A\in\A$, a 2-cell $A\otimes\beta \colon A\otimes g \Rightarrow A\otimes g'$,
            \item for each $f\colon A\rightarrow A'$ in $\A$ and $g\colon B\rightarrow B'$ in $\B$, a \emph{swapping 2-cell} $\gamma_{f,g}$.
                  \[
                    \begin{tikzpicture}
                      \node(s) at (0,2) {$A\otimes B$};
                      \node(u) at (3,2) {$A\otimes B'$};
                      \node(d) at (0,0) {$A'\otimes B$};
                      \node(t) at (3,0) {$A'\otimes B'$};
                      \draw[1cell] (s) to node {A$\otimes g$} (u);
                      \draw[1cell, swap] (s) to node {$f\otimes B$} (d);
                      \draw[1cell] (u) to node {$f\otimes B'$} (t);
                      \draw[1cell, swap] (d) to node {$A'\otimes g$} (t);
                      \twocelld{1.5,1}{$\gamma_{f,g}$}
                    \end{tikzpicture}
                  \]
          \end{itemize}
  \end{itemize}
  Then the \emph{lax Gray tensor product} $\A\otimes\B$ is the quotient of the 2-category generated from $\cG$
  with the following equalities.
  \begin{description}
    \item[1-cell id] $\id_A\otimes B = A\otimes \id_B = 1_{A\otimes B}$.
    \item[1-cell comp] $(f'\otimes B)(f\otimes B) = f'f\otimes B$ and $(A\otimes g')(A\otimes g) = A\otimes g'g$.
    \item[2-cell horizontal] $(\alpha'\otimes B)(\alpha\otimes B) = \alpha'\alpha\otimes B$
      and $(A\otimes\beta')(A\otimes\beta) = A\otimes \beta'\beta$.
    \item[2-cell vertical id] $1_f\otimes B = 1_{f\otimes B}$ and $A\otimes 1_g = 1_{A\otimes g}$.
    \item[2-cell vertical comp]
      $(\alpha'\otimes B)*(\alpha\otimes B) = \alpha'*\alpha\otimes B$
      and $(A\otimes\beta')*(A\otimes\beta) = A\otimes \beta'\beta$.
    \item[2-cell swap id] $\gamma_{1_A,g} = 1_{A\otimes g}$ and $\gamma_{f, 1_B} = 1_{f\otimes B}$.
    \item[2-cell swap comp]
      \[
        \begin{tikzpicture}[baseline={(2,2)}]
          \node(ab) at (0,4) {$A\otimes B$};
          \node(ab') at (3,4) {$A\otimes B'$};
          \node(a'b) at (0,2) {$A'\otimes B$};
          \node(a'b') at (3,2) {$A'\otimes B'$};
          \node(a''b) at (0,0) {$A''\otimes B$};
          \node(a''b') at (3,0) {$A''\otimes B'$};
          \draw[1cell] (ab) to node {$A\otimes g$} (ab');
          \draw[1cell] (a'b) to node {$A'\otimes g$} (a'b');
          \draw[1cell] (a''b) to node {$A''\otimes g$} (a''b');
          \draw[1cell, swap] (ab) to node {$f\otimes B$} (a'b);
          \draw[1cell, swap] (a'b) to node {$f'\otimes B$} (a''b);
          \draw[1cell] (ab') to node {$f\otimes B'$} (a'b');
          \draw[1cell] (a'b') to node {$f'\otimes B'$} (a''b');
          \twocelld{1.5,3}{$\gamma_{f,g}$};
          \twocelld{1.5,1}{$\gamma_{f',g}$};
        \end{tikzpicture}
        =
        \begin{tikzpicture}[baseline={(2,2)}]
          \node(ab) at (0,4) {$A\otimes B$};
          \node(ab') at (3,4) {$A\otimes B'$};
          \node(a''b) at (0,0) {$A''\otimes B$};
          \node(a''b') at (3,0) {$A''\otimes B'$};
          \draw[1cell] (ab) to node {$A\otimes g$} (ab');
          \draw[1cell] (a''b) to node {$A''\otimes g$} (a''b');
          \draw[1cell, swap] (ab) to node {$f'.f\otimes B$} (a''b);
          \draw[1cell] (ab') to node {$f'.f\otimes B'$} (a''b');
          \twocelld{1.5,2}{$\gamma_{f'.f,g}$};
        \end{tikzpicture}
      \]
      and
      \[
        \begin{tikzpicture}[baseline={(0,1)}]
          \node(ab) at (0,2) {$A\otimes B$};
          \node(ab') at (2.5,2) {$A\otimes B'$};
          \node(ab'') at (5,2) {$A\otimes B''$};
          \node(a'b) at (0,0) {$A'\otimes B$};
          \node(a'b') at (2.5,0) {$A'\otimes B'$};
          \node(a'b'') at (5,0) {$A'\otimes B''$};
          \draw[1cell] (ab) to node {$A\otimes g$} (ab');
          \draw[1cell] (ab') to node {$A\otimes g'$} (ab'');
          \draw[1cell] (a'b) to node {$A'\otimes g$} (a'b');
          \draw[1cell] (a'b') to node {$A'\otimes g'$} (a'b'');
          \draw[1cell,swap] (ab) to node {$f\otimes B$} (a'b);
          \draw[1cell] (ab') to node [above,fill=white,pos=0.7] {$f\otimes B'$} (a'b');,
          \draw[1cell] (ab'') to node {$f\otimes B''$} (a'b'');
          \twocelld[swap]{1.25,1}{$\gamma_{f,g}$};
          \twocelld{3.75,1}{$\gamma_{f,g'}$};
        \end{tikzpicture}
        =
        \begin{tikzpicture}[baseline={(0,1)}]
          \node(ab) at (0,2) {$A\otimes B$};
          \node(ab'') at (5,2) {$A\otimes B''$};
          \node(a'b) at (0,0) {$A'\otimes B$};
          \node(a'b'') at (5,0) {$A'\otimes B''$};
          \draw[1cell] (ab) to node {$A\otimes g'.g$} (ab'');
          \draw[1cell] (a'b) to node {$A'\otimes g'.g$} (a'b'');
          \draw[1cell,swap] (ab) to node {$f\otimes B$} (a'b);
          \draw[1cell] (ab'') to node {$f\otimes B''$} (a'b'');
          \twocelld{2.5,1}{$\gamma_{f,g'.g}$};
        \end{tikzpicture}
      \]
    \item[2-cell swap commute]
      \[
        \begin{tikzpicture}[baseline={(2,1.5)}]
          \node(ab) at (0,3) {$A\otimes B$};
          \node(ab') at (4,3) {$A\otimes B'$};
          \node(a'b) at (0,0) {$A'\otimes B$};
          \node(a'b') at (4,0) {$A'\otimes B'$};
          \draw[1cell, bend left=15] (ab) to node {$A\otimes g$} (ab');
          \draw[1cell, bend right=15, swap] (ab) to node {$A\otimes g'$} (ab');
          \draw[1cell, bend left=30] (ab') to node {$f\otimes B'$} (a'b');
          \draw[1cell, bend right=30, swap] (ab') to node {$f'\otimes B'$} (a'b');
          \draw[1cell, bend right=15, swap] (a'b) to node {$A'\otimes g'$} (a'b');
          \draw[1cell, bend right=30, swap] (ab) to node {$f'\otimes B$} (a'b);
          \twocelldl[swap]{1.6,1.1}{$\gamma_{f',g'}$}
          \twocelld{2,3}{$A\otimes \beta$};
          \twocelll{4,1.5}{$\alpha \otimes B'$};
        \end{tikzpicture}
        =
        \begin{tikzpicture}[baseline={(2,1.5)}]
          \node(ab) at (0,3) {$A\otimes B$};
          \node(ab') at (4,3) {$A\otimes B'$};
          \node(a'b) at (0,0) {$A'\otimes B$};
          \node(a'b') at (4,0) {$A'\otimes B'$};
          \draw[1cell, bend left=15] (ab) to node {$A\otimes g$} (ab');
          \draw[1cell, bend left=30] (ab') to node {$f\otimes B'$} (a'b');
          \draw[1cell, bend right=15, swap] (a'b) to node {$A'\otimes g$} (a'b');
          \draw[1cell, bend right=30, swap] (ab) to node {$f'\otimes B$} (a'b);
          \draw[1cell, bend left=30] (ab) to node {$f\otimes B$} (a'b);
          \draw[1cell, bend left=15] (a'b) to node {$A'\otimes g$} (a'b');
          \twocelldl{2.4,1.8}{$\gamma_{f,g}$}
          \twocelld{2,0}{$A'\otimes \beta$};
          \twocelll{0,1.5}{$\alpha\otimes B$};
        \end{tikzpicture}
      \]
  \end{description}

  This tensor product $\A \otimes_l \B$ lifts to a functor $\otimes_l\colon \TwoCat_0\times\TwoCat_0\rightarrow\TwoCat_0$ and
  defines two adjunctions,
  \begin{align*}
    \A\otimes_l(-)   & \quad\dashv\quad \TL{\A,-}  \\
    (-) \otimes_l \B & \quad\dashv\quad \TO{\B,-},
  \end{align*}
  making $\TwoCat_0$ a monoidal biclosed category.
\end{definition}

The adjunction $\A\otimes_l(-)~\dashv~\TL{\A,-}$ provides
a natural bijection $\TwoCat_0(\A\otimes_l\B, \K) \iso \TwoCat_0(\B,\TL{\A,\K})$.
This bijection extends to an isomorphism of 2-categories
$\TL{\A\otimes_l\B,\K}\iso\TL{\B,\TL{\A,\K}}$,
which is a direct consequence of the monoidal left closed structure.
Here, we list some useful isomorphisms of 2-categories.

\begin{proposition}\label{prop:Gray_tensor_property}
  Let $\A,\B,\K$ be 2-categories.
  We have the following isomorphisms natural in all variables
  as objects in $\TwoCat_0$.
  \begin{enumerate}
    \setlength\itemsep{3pt}
    \item $\TL{\A\otimes_l\B,\K} \iso \TL{\B,\TL{\A,\K}}$
    \item $\TO{\A\otimes_l\B,\K} \iso \TO{\A,\TO{\B,\K}}$
    \item $\TO{\B,\TL{\A,\K}} \iso \TL{\A,\TO{\B,\K}}$
    \item ${\left(\A\otimes_l\B\right)}^\op \iso \B^\op\otimes_l\A^\op$
    \item ${\left(\A\otimes_l\B\right)}^\co \iso \B^\co\otimes_l\A^\co$
  \end{enumerate}
\end{proposition}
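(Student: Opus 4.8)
The plan is to derive all five isomorphisms uniformly from the two tensor--hom adjunctions $\A\otimes_l(-)\dashv\TL{\A,-}$ and $(-)\otimes_l\B\dashv\TO{\B,-}$ that the definition of $\otimes_l$ supplies, from the associativity coherence isomorphism of the monoidal structure $(\TwoCat_0,\otimes_l,\One)$, and (for the last two) from the duality isomorphisms for functor $2$-categories recalled at the start of \Cref{sec:2_caty_pre}. The conceptual crux that keeps everything at the level of strict structure is that in each item the two objects to be compared already lie in $\TwoCat_0$, so it suffices to produce a natural isomorphism of representable $\Set$-valued hom-functors and invoke the ordinary Yoneda lemma in $\TwoCat_0$: an isomorphism in $\TwoCat_0$ is exactly an invertible $2$-functor, i.e.\ an isomorphism of $2$-categories. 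No separate ``$2$-dimensional'' enhancement is then needed.

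For (i)--(iii) I would probe the codomain. Unfolding the composites,
\begin{align*}
  \TwoCat_0\bigl(\C,\TL{\A\otimes_l\B,\K}\bigr)
    &\iso \TwoCat_0\bigl((\A\otimes_l\B)\otimes_l\C,\K\bigr),\\
  \TwoCat_0\bigl(\C,\TL{\B,\TL{\A,\K}}\bigr)
    &\iso \TwoCat_0\bigl(\A\otimes_l(\B\otimes_l\C),\K\bigr),
\end{align*}
so the associativity isomorphism $(\A\otimes_l\B)\otimes_l\C\iso\A\otimes_l(\B\otimes_l\C)$, natural in $\C$, yields (i) after applying $\TwoCat_0(-,\K)$ and Yoneda. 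The identical bookkeeping with the right internal hom gives $\TwoCat_0(\C,\TO{\A\otimes_l\B,\K})\iso\TwoCat_0(\C\otimes_l(\A\otimes_l\B),\K)$ and $\TwoCat_0(\C,\TO{\A,\TO{\B,\K}})\iso\TwoCat_0((\C\otimes_l\A)\otimes_l\B,\K)$, proving (ii); the mixed composite $\TwoCat_0(\C,\TL{\A,\TO{\B,\K}})\iso\TwoCat_0((\A\otimes_l\C)\otimes_l\B,\K)\iso\TwoCat_0(\C,\TO{\B,\TL{\A,\K}})$ proves (iii). In each case the sole nontrivial input is associativity of $\otimes_l$.

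For (iv) and (v) I would instead probe with maps out, since now the tensor product sits in the domain. Using that $(-)^\op\colon\TwoCat_0\to\TwoCat_0$ is an involution, whence $\TwoCat_0(\C^\op,\K)\iso\TwoCat_0(\C,\K^\op)$, together with the instance $\TL{\A,\K^\op}\iso\TO{\A^\op,\K}^\op$ of the functor-category duality, I obtain the chain
\begin{align*}
  \TwoCat_0\bigl((\A\otimes_l\B)^\op,\K\bigr)
    &\iso \TwoCat_0\bigl(\A\otimes_l\B,\K^\op\bigr)
     \iso \TwoCat_0\bigl(\B,\TL{\A,\K^\op}\bigr)\\
    &\iso \TwoCat_0\bigl(\B,\TO{\A^\op,\K}^\op\bigr)
     \iso \TwoCat_0\bigl(\B^\op,\TO{\A^\op,\K}\bigr)
     \iso \TwoCat_0\bigl(\B^\op\otimes_l\A^\op,\K\bigr),
\end{align*}
natural in $\K$; Yoneda then gives (iv). Replacing $(-)^\op$ by $(-)^\co$ and the corresponding duality $\TL{\A,\K^\co}\iso\TO{\A^\co,\K}^\co$ throughout yields (v) verbatim.

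The main obstacle is not any single isomorphism but the bookkeeping of naturality: the statement demands isomorphisms natural simultaneously in $\A$, $\B$, and $\K$, so I must verify that every arrow in the chains above is natural in all three arguments and that the final Yoneda step respects this multi-variable naturality rather than merely naturality in the probing variable. This is routine but is where the care lies; the genuinely substantive facts---the biclosed structure and associativity of $\otimes_l$, and the functor-category duality---are established upstream and are used here as black boxes.
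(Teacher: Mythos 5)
Your proof is correct and takes essentially the same route as the paper: your Yoneda-probing derivation of (i)--(iii) is just the standard unpacking of what the paper dismisses as ``trivial from the monoidal biclosed structure on $\TwoCat_0$.'' For (iv)--(v), your hom-set chain combining the two tensor--hom adjunctions, the $\op$/$\co$ dualities of functor 2-categories, and the Yoneda lemma in the variable $\K$ is the same argument the paper runs at the level of the 2-categories $\TL{-,-}$ (using (i), (ii) and the dualities there) before restricting to objects and invoking Yoneda.
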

\begin{proof}
  (i)(ii)(iii) are trivial from the monoidal biclosed structure on $\TwoCat_0$.
  For (iv), it can be calculated as
  $\TL{{\left(\A\otimes_l\B\right)}^\op, \K} \iso {\TO{\A\otimes_l\B, \K^\op}}^\op \iso {\TO{\A,\TO{\B,\K^\op}}}^\op \iso \TL{\A^\op,\TL{\B^\op,\K}} \iso \TL{\B^\op\otimes_l\A^\op, \K}$.
  So it follows $\TwoCat_0\left({\left(\A\otimes_l\B\right)}^\op, \K\right)\iso\TwoCat_0\left(\B^\op\otimes_l\A^\op, \K\right)$.
  Since this is natural in $\K\in\TwoCat_0$, the Yoneda lemma proves (iv).
  (v) can also be shown similarly.
\end{proof}

More strongly, Gray also showed that the isomorphisms in \Cref{prop:Gray_tensor_property} are 2-natural in $\K$ as an object in $\TwoCat$.
Therefore, from (i), we can say that
$\TL{-,-}$ is a strong monoidal functor from $(\TwoCat^\op_0,\otimes_l)$ to $(\TT{\TwoCat, \TwoCat}_0, \circ)$,
where the latter is the category of 2-functors and 2-natural transformations with a monoidal structure defined by the composition.

Each 1-cell $h = (A_n\otimes g_n)(f_n\otimes B_{n-1})\dots(A_1\otimes g_1)(f_1\otimes B_0)$
in $\A\otimes_l\B$ has two composable lists of 1-cells $f_1,\dots,f_n \in\A$ and
$g_1,\dots,g_n\in\B$.
Since, as the name indicates, the swapping 2-cell $\gamma$ swaps a pair of 1-cells from $\A$ and $\B$,
there are canonical 2-cells $\hat{\gamma}_h\colon h\Rightarrow (A_n\otimes g_n\dots g_1)(f_n\dots f_1\otimes B_0)$
and $\tilde{\gamma}_h\colon (f_n\dots f_1\otimes B_n)(A_n\otimes g_n\dots g_1)\Rightarrow h$,
both defined as composites of the swapping 2-cells $\gamma$.
\[
  \begin{tikzpicture}
    \node(s) at (0,8.2) {$A_0\otimes B_0$};
    \node(t) at (5.7,3) {$A_n\otimes B_n$};
    \node(l) at (0,3) {$A_n\otimes B_0$};
    \node(r) at (5.7,8.2) {$A_0\otimes B_n$};
    \node(10) at (0,7) {$\cdot$};
    \node(11) at (1,7) {$\cdot$};
    \node(21) at (1,6) {$\cdot$};
    \node(22) at (2,6) {$\cdot$};
    \node[labelsize] (h) at (2.4,6.5) {$h$};
    \node(dot) at (2.5,5.6) {$\ddots$};
    \node(33) at (3,5) {$\cdot$};
    \node(43) at (3,4) {$\cdot$};
    \node(44) at (4,4) {$\cdot$};
    \node(54) at (4,3) {$\cdot$};
    \draw[1cell] (s) to node{} (10);
    \draw[1cell] (10) to node{} (11);
    \draw[1cell] (11) to node{} (21);
    \draw[1cell] (21) to node{} (22);
    \draw[1cell] (33) to node{} (43);
    \draw[1cell] (43) to node{} (44);
    \draw[1cell] (44) to node{} (54);
    \draw[1cell] (54) to node{} (t);
    \draw[1cell,swap] (s) to node{$f_n\dots f_1\otimes B_0$} (l);
    \draw[1cell] (s) to node{$A_0\otimes g_n\dots g_1$} (r);
    \draw[1cell] (r) to node{$f_n\dots f_1\otimes B_n$} (t);
    \draw[1cell,swap] (l) to node{$A_n\otimes g_n\dots g_1$} (t);
    \twocelldl{4.5,6.5}{$\tilde{\gamma}_h$};
    \twocelldl{1.5,4.5}{$\hat{\gamma}_h$};
  \end{tikzpicture}
\]

The rules \textbf{2-cell swap id} and \textbf{2-cell swap comp} imply that
$\hat{\gamma}_h$ and $\tilde{\gamma}_h$
are unique and well-defined for each 1-cell $h$.

A lax Gray tensor product $\A\otimes_l\B$ has
canonical comparison maps between the Cartesian product $\A\times\B$.
One is a 2-functor $\rho = (\rho_1,\rho_2)\colon\A\otimes_l\B\rightarrow\A\times\B$
sending $A\otimes B$ to $(A,B)$ that identifies the swapping 2-cells $\gamma$.
This 2-functor $\rho$ can also be seen as a representation of the inclusion 2-functor
$\left[\B,\left[\A,\K\right]\right]\rightarrow\TL{\B,\TL{\A,\K}}$.
The others are normal lax and normal oplax functors of the opposite direction
$\sigma_l, \sigma_{op}\colon\A\times\B \rightsquigarrow \A\otimes_l\B$.
The normal lax functor $\sigma_l$ sends $(f,g)\colon (A,B)\rightarrow (A',B')$ to $(A'\otimes g)(f\otimes B)$,
where the comparison map $\sigma_l^2$ for the composition of $(f,g)$ and $(f',g')$
is defined by the following $\hat{\gamma}$.
\[
  \begin{tikzpicture}
    \node(a) at (0,4) {$A\otimes B$};
    \node(b) at (6,0) {$A''\otimes B''$};
    \node(s) at (0,2) {$A'\otimes B$};
    \node(u) at (3,2) {$A'\otimes B'$};
    \node(d) at (0,0) {$A''\otimes B$};
    \node(t) at (3,0) {$A''\otimes B'$};
    \draw[1cell] (a) to node {$f \otimes B$} (s);
    \draw[1cell] (t) to node {$A'' \otimes g'$} (b);
    \draw[1cell] (s) to node {$A'\otimes g$} (u);
    \draw[1cell, swap] (s) to node {$f'\otimes B$} (d);
    \draw[1cell] (u) to node {$f'\otimes B'$} (t);
    \draw[1cell, swap] (d) to node {$A''\otimes g$} (t);
    \twocelldl{1.5,1}{$\gamma_{f',g}$}
  \end{tikzpicture}
\]
Dually, the normal oplax functor $\sigma_{op}$ sends $(f,g)\colon (A,B)\rightarrow (A',B')$
to $(f\otimes B')(A\otimes g)$,
and the comparison map $\sigma_{op}^2$ for the composition of $(f,g)$
and $(f',g')$ is defined by the following $\tilde{\gamma}$.
\[
  \begin{tikzpicture}
    \node(a) at (0,4) {$A\otimes B$};
    \node(s) at (3,4) {$A\otimes B'$};
    \node(u) at (6,4) {$A\otimes B''$};
    \node(d) at (3,2) {$A'\otimes B'$};
    \node(t) at (6,2) {$A'\otimes B''$};
    \node(b) at (6,0) {$A''\otimes B''$};
    \draw[1cell] (a) to node {$A \otimes g$} (s);
    \draw[1cell] (t) to node {$f' \otimes B''$} (b);
    \draw[1cell] (s) to node {$A \otimes g'$} (u);
    \draw[1cell, swap] (s) to node {$f\otimes B'$} (d);
    \draw[1cell] (u) to node {$f\otimes B''$} (t);
    \draw[1cell, swap] (d) to node {$A'\otimes g'$} (t);
    \twocelldl{4.5,3}{$\gamma_{f,g'}$}
  \end{tikzpicture}
\]
When $\rho$ is post-composed, $\sigma_l$ and $\sigma_{op}$ become
the identity 2-functor $\rho\sigma_l = \rho\sigma_{op} = \Id_{\A\times\B}$.
Conversely, though $\sigma_l\rho$ is not an identity,
there is an icon $\hat{\gamma}\colon\Id\rightarrow \sigma_l\rho$.
Since $\hat{\gamma}$ satisfies $\rho\hat{\gamma} = \id$ and $\hat{\gamma}\rho = \id$,
$\rho$ and $\sigma_l$ are adjoint with identity counit in
$\TwoCat^{\mathrm{Lax}}_{\mathrm{Icon}}$.

In particular, the presentation of Gray tensor products of the form $\barA\otimes \barB$
is well studied in \cite{Nikolic_2019_strict_tensor_prod}.

\begin{remark}
  The lax Gray tensor product does not extend to a structure of a monoidal 2-category
  on $\TwoCat$ or $\TwoCatTI$.
  First, $\TwoCat\left(\A\otimes_l\B,\K\right)$ and $\TwoCat\left(\B,\TL{\A,\K}\right)$ are not isomorphic in general,
  since if we take $\One$ for $\B$, it says every lax natural transformation between 2-functors $\A\rightarrow\K$ is 2-natural.
  Second, comparing the data of 1-cells in $\TwoCatTI(\A\otimes_l\B,\K)$ and $\TwoCatTI(\B,\TL{\A,\K})$,
  an icon $F\Rightarrow G\colon\A\otimes_l\B\rightarrow\K$ has the following two kinds of data of 2-cells,
  but an icon $F'\Rightarrow G'\colon \B\rightarrow\TL{\A,\K}$ requires the latter 2-cells to be identities.
  \[
    \begin{tikzpicture}
      \node(Q) at (-5,1.5) {$FAB$};
      \node(A) at (-5,0) {$FAB'$};
      \node(W) at (-2,1.5) {$GAB$};
      \node(S) at (-2,0) {$GAB'$};
      \node(E) at (2,1.5) {$FAB$};
      \node(D) at (2,0) {$FA'B$};
      \node(R) at (5,1.5) {$GAB$};
      \node(F) at (5,0) {$GA'B$};
      \draw[1cell equality] (Q) to (W);
      \draw[1cell equality] (A) to (S);
      \draw[1cell equality] (E) to (R);
      \draw[1cell equality] (D) to (F);
      \draw[1cell,swap] (Q) to node {$FAg$} (A);
      \draw[1cell] (W) to node {$GAg$} (S);
      \draw[1cell,swap] (E) to node {$FfB$} (D);
      \draw[1cell] (R) to node {$GfB$} (F);
      \twocellr{-3.5,0.75}{}
      \twocellr{3.5,0.75}{}
    \end{tikzpicture}
  \]

  And also, the monoidal structure does not have a natural extension to $\mathbf{2\text{-}Cat}^\mathrm{Lax}_0$.
  That is because,
  for each lax functor $F\colon \A\rightsquigarrow\A'$,
  we want $F\otimes_l \Id_\B \colon \A\otimes_l\B\rightsquigarrow \A'\otimes_l\B$ to
  map $A\otimes g$ to $FA\otimes g$ and $f\otimes B$ to $Ff\otimes B$,
  but
  $1_{FA\otimes B} = FA\otimes 1_B$ is not identical to $F1_A \otimes B$
  unless $F$ is a normal lax functor.
  Therefore, $F\otimes_l\Id_\K$ is not well-defined.
\end{remark}

\subsection{Generalized distributive law}

A distributive law was an object in
$\mnd_l(\mnd_l(\K)) \iso \LL{\One,\LL{\One, \K}} \iso \TL{\Dist, \K}$.
The previous section and \Cref{prop:lax_fun_classifier} show that this is also isomorphic to
$\TL{\overline{\One}\otimes_l\overline{\One}, \K}$,
which proves $\Dist \iso \overline{\One}\otimes_l\overline{\One}$.
As a generalization, we define a distributive law of lax functors
in the following way as in \cite{Nikolic_2019_strict_tensor_prod}.
A similar construction can also be found in~\cite{2dimensionalbifunctortheoremsdistributive}.

\begin{definition}
  A \emph{generalized distributive law} in $\K$ is a 2-functor $\barC\otimes\barD\rightarrow \K$.
\end{definition}

The next is immediate from \Cref{prop:Gray_tensor_property}.

\begin{proposition}\label{prop:generalized_dist_law}
  All the followings are equivalent notions.
  \begin{enumerate}
    \item A generalized distributive law $\Gamma\colon\barC\otimes\barD\rightarrow \K$.
    \item A lax functor $\D\rightsquigarrow \LL{\C,\K}$.
    \item A lax functor $\C\rightsquigarrow \LO{\D,\K}$.
  \end{enumerate}
\end{proposition}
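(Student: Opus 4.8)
The plan is to read a generalized distributive law as an object of the functor 2-category $\TL{\barC\otimes\barD,\K}$; note that a 2-functor $\barC\otimes\barD\to\K$ is equally an object of $\TO{\barC\otimes\barD,\K}$, since these two 2-categories share the same objects and differ only in their $1$- and $2$-cells. I would then transport such an object along a chain of isomorphisms of 2-categories, reading off the corresponding object at each stage. Everything reduces to selecting the correct instances of \Cref{prop:Gray_tensor_property} and \Cref{prop:lax_fun_classifier}, so the statement really is immediate.

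For the equivalence (i)$\iff$(ii) I would compose
\[
  \TL{\barC\otimes_l\barD,\K}\iso\TL{\barD,\TL{\barC,\K}}\iso\LL{\D,\TL{\barC,\K}}\iso\LL{\D,\LL{\C,\K}},
\]
where the first isomorphism is \Cref{prop:Gray_tensor_property}(i) with $\A=\barC$ and $\B=\barD$; the second is \Cref{prop:lax_fun_classifier} with transformation type $w=l$, applied in the argument $\barD$ and with target 2-category $\TL{\barC,\K}$; and the third is obtained by applying the 2-functor $\LL{\D,-}$ to the isomorphism $\TL{\barC,\K}\iso\LL{\C,\K}$ (again \Cref{prop:lax_fun_classifier}, $w=l$), using the functoriality of $\ww{w}{w'}{\D,-}$ in its second variable. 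Restricting to objects, a generalized distributive law is exactly a lax functor $\D\rightsquigarrow\LL{\C,\K}$.

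For (i)$\iff$(iii) I would instead use the other closed structure and compose
\[
  \TO{\barC\otimes_l\barD,\K}\iso\TO{\barC,\TO{\barD,\K}}\iso\LO{\C,\TO{\barD,\K}}\iso\LO{\C,\LO{\D,\K}},
\]
whose steps are \Cref{prop:Gray_tensor_property}(ii), then \Cref{prop:lax_fun_classifier} with $w=c$ in the argument $\barC$ and target $\TO{\barD,\K}$, then the 2-functor $\LO{\C,-}$ applied to $\TO{\barD,\K}\iso\LO{\D,\K}$ (\Cref{prop:lax_fun_classifier}, $w=c$). Passing to objects yields a lax functor $\C\rightsquigarrow\LO{\D,\K}$.

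The derivation is pure bookkeeping once the right instances are chosen, so I anticipate no real difficulty; the single point I would flag is variance matching. The left-closed route forces the transformation type $w=l$ and, through \Cref{prop:Gray_tensor_property}(i), places $\barD$ in the outer slot, whereas the right-closed route forces $w=c$ and, through \Cref{prop:Gray_tensor_property}(ii), places $\barC$ in the outer slot; this is precisely what produces $\LL{\D,\LL{\C,\K}}$ in the first case and $\LO{\C,\LO{\D,\K}}$ in the second, rather than a transposed variant. Keeping these choices straight, together with the observation that $\TL{\barC\otimes\barD,\K}$ and $\TO{\barC\otimes\barD,\K}$ have the same objects so that one notion of generalized distributive law feeds both chains, is all the proof requires.
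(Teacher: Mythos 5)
Your proposal is correct and is essentially the paper's own argument: the paper disposes of this proposition with the single remark that it is immediate from \Cref{prop:Gray_tensor_property}, and the two chains of isomorphisms you write out (the closed structure of the lax Gray tensor plus the classifier isomorphism of \Cref{prop:lax_fun_classifier}) are exactly what that remark compresses. Your variance bookkeeping --- $w=l$ for the left-closed chain, $w=c$ for the right-closed chain, and the observation that $\TL{\barC\otimes\barD,\K}$ and $\TO{\barC\otimes\barD,\K}$ share the same objects --- is sound.
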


Such a distributive law $\Gamma$
gives rise to lax functors $\Gamma(C,-) \colon \D \rightsquigarrow \K$ for each $C\in\C$
and $\Gamma(-,D) \colon \C \rightsquigarrow \K$ for each $D\in\D$,
where $\Gamma(C,-)$ is defined as the following.
\[
  \begin{tikzpicture}
    \node(a) at (0,0) {$\D$};
    \node(s) at (3,0) {$\barD \iso \One\otimes\barD$};
    \node(d) at (6,0) {$\barC\otimes \barD$};
    \node(f) at (8.3,0) {$\K$};
    \draw[1cell squig] (a) to node{$p$} (s);
    \draw[1cell] (s) to node{$C\otimes \Id$} (d);
    \draw[1cell] (d) to node{$\Gamma$} (f);
  \end{tikzpicture}
\]

Conversely, suppose that we are given two families of lax functors $F_C\colon \D \rightsquigarrow \K$
and $G_D\colon \C \rightsquigarrow \K$ such that $\Gamma_{CD} \coloneqq F_C(D) = G_D(C)$.
The remaining data needed to form a distributive law $\Gamma$ are 2-cells
${\left\{\gamma_{f,g} \colon G_{D'}(f).F_{C}(g) \Rightarrow F_{C'}(g).G_D(f)\right\}}_{f\colon C\rightarrow C', g\colon D\rightarrow D'}$
satisfying all the following.
\[
  \begin{tikzpicture}[baseline={(0,0)}]
    \node(ul) at (-1.2, 1) {$\Gamma_{CD}$};
    \node(ur) at ( 1.2, 1) {$\Gamma_{CD}$};
    \node(dl) at (-1.2,-1) {$\Gamma_{C'D}$};
    \node(dr) at ( 1.2,-1) {$\Gamma_{C'D}$};
    \draw[1cell equality,bend left=25] (ul) to (ur);
    \draw[1cell,swap,bend right=25] (ul) to node{$F_C1_D$} (ur);
    \draw[1cell,swap,bend right=25] (dl) to node{$F_{C'}1_D$} (dr);
    \draw[1cell,sloped,swap] (ul) to node{$G_Df$} (dl);
    \draw[1cell,sloped] (ur) to node{$G_Df$} (dr);
    \twocelldl{-0.3,-0.3}{$\gamma_{f,1_D}$}
    \twocelld{-0.1,1}{$F^0_C$}
  \end{tikzpicture}
  \,=\,
  \begin{tikzpicture}[baseline={(0,0)}]
    \node(ul) at (-1.2, 1) {$\Gamma_{CD}$};
    \node(ur) at ( 1.2, 1) {$\Gamma_{CD}$};
    \node(dl) at (-1.2,-1) {$\Gamma_{C'D}$};
    \node(dr) at ( 1.2,-1) {$\Gamma_{C'D}$};
    \draw[1cell equality,bend left] (ul) to (ur);
    \draw[1cell equality,bend left] (dl) to (dr);
    \draw[1cell,swap,bend right=25] (dl) to node{$F_{C'}1_D$} (dr);
    \draw[1cell,sloped,swap] (ul) to node{$G_Df$} (dl);
    \draw[1cell,sloped] (ur) to node{$G_Df$} (dr);
    \twocelld{-0.1,-1}{$F^0_{C'}$}
    \draw[double,double equal sign distance, shorten >=2pt, shorten <=3pt]
    (0.175,0.55) to (-0.15,0.225);
  \end{tikzpicture}
  \quad
  \begin{tikzpicture}[baseline={(0,0)}]
    \node(ul) at (-1, 1.2) {$\Gamma_{CD}$};
    \node(ur) at ( 1, 1.2) {$\Gamma_{CD}$};
    \node(dl) at (-1,-1.2) {$\Gamma_{CD'}$};
    \node(dr) at ( 1,-1.2) {$\Gamma_{CD'}$};
    \draw[1cell] (ul) to node{$F_Cg$} (ur);
    \draw[1cell,swap] (dl) to node{$F_Cg$} (dr);
    \draw[1cell,swap,bend right=25,sloped] (ul) to node{$G_D1_C$} (dl);
    \draw[1cell equality,bend left=25] (ur) to (dr);
    \draw[1cell,swap,bend right=25,sloped] (ur) to node{$G_D1_C$} (dr);
    \twocelldl[swap]{-0.1,-0.35}{$\gamma_{1_C,g}$}
    \twocelll[swap]{1,-0.1}{$G^0_C$}
  \end{tikzpicture}
  \,=\,
  \begin{tikzpicture}[baseline={(0,0)}]
    \node(ul) at (-1, 1.2) {$\Gamma_{CD}$};
    \node(ur) at ( 1, 1.2) {$\Gamma_{CD}$};
    \node(dl) at (-1,-1.2) {$\Gamma_{CD'}$};
    \node(dr) at ( 1,-1.2) {$\Gamma_{CD'}$};
    \draw[1cell] (ul) to node{$F_Cg$} (ur);
    \draw[1cell,swap] (dl) to node{$F_Cg$} (dr);
    \draw[1cell,bend right=25] (ul) to (dl);
    \draw[1cell,swap,bend right=25,sloped] (ul) to node{$G_D1_C$} (dl);
    \draw[1cell equality, bend left=25] (ur) to (dr);
    \draw[1cell equality, bend left=25] (ul) to (dl);
    \twocelll[swap]{-1,-0.1}{$G^0_{C'}$}
    \draw[double,double equal sign distance, shorten >=2pt, shorten <=3pt]
    (0.475,0.175) to (0.125,-0.175);
  \end{tikzpicture}
\]
\[
  \begin{tikzpicture}[baseline={(0,0)},x=9mm]
    \node(ul) at (-2, 1) {$\Gamma_{CD}$};
    \node(ur) at ( 2, 1) {$\Gamma_{CD''}$};
    \node(um) at ( 0, 1.7) {$\Gamma_{CD'}$};
    \node(dl) at (-2,-1) {$\Gamma_{C'D}$};
    \node(dr) at ( 2,-1) {$\Gamma_{C'D''}$};
    \draw[1cell,bend left=12] (ul) to node{$F_Cg$} (um);
    \draw[1cell,bend left=12] (um) to node{$F_Cg'$} (ur);
    \draw[1cell,swap,bend right=25] (ul) to node{$F_C(g'g)$} (ur);
    \draw[1cell,swap,bend right=25] (dl) to node{$F_{C'}(g'g)$} (dr);
    \draw[1cell,swap] (ul) to node{$G_Df$} (dl);
    \draw[1cell] (ur) to node{$G_{D''}f$} (dr);
    \twocelld[swap]{0.2,1}{$F^2_C$}
    \twocelldl[swap]{0,-0.8}{$\gamma_{f,g'g}$}
  \end{tikzpicture}
  \ =\
  \begin{tikzpicture}[baseline={(0,0)}]
    \node(ul) at (-2, 1) {$\Gamma_{CD}$};
    \node(ur) at ( 2, 1) {$\Gamma_{CD''}$};
    \node(um) at ( 0, 1.7) {$\Gamma_{CD'}$};
    \node(dl) at (-2,-1) {$\Gamma_{C'D}$};
    \node(dr) at ( 2,-1) {$\Gamma_{C'D''}$};
    \node(dm) at ( 0,-0.3) {$\Gamma_{C'D'}$};
    \draw[1cell,bend left=12] (ul) to node{$F_Cg$} (um);
    \draw[1cell,bend left=12] (um) to node{$F_Cg'$} (ur);
    \draw[1cell,bend left=12,sloped] (dl) to node{$F_{C'}g$} (dm);
    \draw[1cell,bend left=12,sloped] (dm) to node{$F_{C'}g$} (dr);
    \draw[1cell,swap,bend right=25] (dl) to node{$F_{C'}(g'g)$} (dr);
    \draw[1cell,swap] (ul) to node{$G_Df$} (dl);
    \draw[1cell] (ur) to node{$G_{D''}f$} (dr);
    \draw[1cell] (um) to node[auto=false,fill=white]{$G_{D'}f$} (dm);
    \twocelld[swap]{0.2,-1}{$F^2_{C'}$}
    \twocelldl[swap]{-0.9,0.4}{$\gamma$}
    \twocelldl[swap]{ 1.2,0.4}{$\gamma$}
  \end{tikzpicture}
\]
\[
  \begin{tikzpicture}[baseline={(0,0)},x=9mm]
    \node(ul) at (-1.5, 1.25) {$\Gamma_{CD}$};
    \node(ur) at ( 1.5, 1.25) {$\Gamma_{CD'}$};
    \node(mr) at ( 2.2,  0) [transform canvas={xshift=7pt}]{$\Gamma_{C'D'}$};
    \node(dl) at (-1.5,-1.25) {$\Gamma_{C''D}$};
    \node(dr) at ( 1.5,-1.25) {$\Gamma_{C''D'}$};
    \draw[1cell] (ul) to node{$F_Cg$} (ur);
    \draw[1cell,swap] (dl) to node{$F_{C''}g$} (dr);
    \draw[1cell,swap,bend right=35] (ul) to node{$G_{D}(f'f)$} (dl);
    \draw[1cell,bend left=12] (ur) to node{$G_{D'}f$} (mr);
    \draw[1cell,bend left=12] (mr) to node{$G_{D'}f'$} (dr);
    \draw[1cell,swap,bend right=35,pos=0.7] (ur) to node{$G_{D'}(f'f)$} (dr);
    \twocelldl[swap]{-0.6,0}{$\gamma_{f'f,g}$}
    \twocelll[swap]{1.5,-0.2}{$G^2_{C'}$}
  \end{tikzpicture}
  \qquad=\quad
  \begin{tikzpicture}[baseline={(0,0)},x=9mm]
    \node(ul) at (-1.5, 1.25) {$\Gamma_{CD}$};
    \node(ur) at ( 1.5, 1.25) {$\Gamma_{CD'}$};
    \node(mr) at ( 2.2,  0) {$\Gamma_{C'D'}$};
    \node(dl) at (-1.5,-1.25) {$\Gamma_{C''D}$};
    \node(dr) at ( 1.5,-1.25) {$\Gamma_{C''D'}$};
    \node(ml) at (-0.8, 0)  [transform canvas={xshift=2pt}] {$\Gamma_{C'D}$};
    \draw[1cell] (ul) to node{$F_Cg$} (ur);
    \draw[1cell,swap] (dl) to node{$F_{C''}g$} (dr);
    \draw[1cell,swap,bend right=35] (ul) to node{$G_{D}(f'f)$} (dl);
    \draw[1cell,bend left=12] (ur) to node{$G_{D'}f$} (mr);
    \draw[1cell,bend left=12] (mr) to node{$G_{D'}f'$} (dr);
    \draw[1cell,bend left=12,pos=0.6] (ul) to node{$G_{D}f$} (ml);
    \draw[1cell,bend left=12,pos=0.4] (ml) to node{$G_{D}f'$} (dl);
    \draw[1cell] (ml) to node[auto=false,fill=white]{$F_{C'}g$} (mr);
    \twocelll[swap]{-1.6,-0.2}{$G^2_C$}
    \twocelldl[swap]{0.8,0.65} {$\gamma$}
    \twocelldl[swap]{0.8,-0.65}{$\gamma$}
  \end{tikzpicture}
  \quad
\]
\[
  \begin{tikzpicture}[baseline={(2,1.5)}]
    \node(ab) at (0,3)  {$\Gamma_{C D}$};
    \node(ab') at (4,3) {$\Gamma_{C D'}$};
    \node(a'b) at (0,0) {$\Gamma_{C'D }$};
    \node(a'b') at (4,0){$\Gamma_{C'D'}$};
    \draw[1cell, bend left=15] (ab) to node {$F_Cg$} (ab');
    \draw[1cell, bend left=30] (ab') to node {$G_{D'}f$} (a'b');
    \draw[1cell, bend right=15, swap] (a'b) to node {$F_{C'}g'$} (a'b');
    \draw[1cell, bend right=30, swap] (ab) to node {$G_Df'$} (a'b);
    \draw[1cell, bend right=15, swap] (ab) to node {$F_Cg'$} (ab');
    \draw[1cell, bend right=30, swap] (ab') to node {$G_{D'}f'$} (a'b');
    \twocelldl[swap]{1.6,1.1}{$\gamma_{f',g'}$}
    \twocelld{2,3}{$F_C\beta$};
    \twocelll{4,1.5}{$G_{D'}\alpha$};
  \end{tikzpicture}
  =
  \begin{tikzpicture}[baseline={(2,1.5)}]
    \node(ab) at (0,3) {$A\otimes B$};
    \node(ab') at (4,3) {$A\otimes B'$};
    \node(a'b) at (0,0) {$A'\otimes B$};
    \node(a'b') at (4,0) {$A'\otimes B'$};
    \draw[1cell, bend left=15] (ab) to node {$F_Cg$} (ab');
    \draw[1cell, bend left=30] (ab') to node {$G_{D'}f$} (a'b');
    \draw[1cell, bend right=15, swap] (a'b) to node {$F_{C'}g'$} (a'b');
    \draw[1cell, bend right=30, swap] (ab) to node {$G_Df'$} (a'b);
    \draw[1cell, bend left=30] (ab) to node {$G_Df$} (a'b);
    \draw[1cell, bend left=15] (a'b) to node {$F_{C'}g$} (a'b');
    \twocelldl{2.4,1.8}{$\gamma_{f,g}$}
    \twocelld{2,0}{$F_{C'}\beta$};
    \twocelll{0,1.5}{$G_{D}\alpha$};
  \end{tikzpicture}
\]

One can observe that the first four axioms look similar to those of the original distributive law for monads.
The last axiom says the compatibility with 2-cells in $\C$ and $\D$, which were trivial in the original case $\C=\D=\One$.

\begin{example}
  Let $\D$ be the distribution monad on $\Set$
  and $\M$ be the finite multiset monad on $\Set$.
  We denote the set of $n$-element multisets over a set $X$ as $\M[n](X)$.
  Assuming the monoid $(\N,\times,1)$ is a discrete monoidal category,
  there is a lax monoidal functor $\M[-]\colon\N \rightarrow \mathbf{End}(\Set)$
  sending $n$ to $\M[n]$.
  The multiplication ${\M[-]}^2_{nm}\colon \M[n]\circ\M[m]\Rightarrow\M[mn]$ is defined by
  the union of the multisets.

  In \cite{Jacobs_2021_multiset_distribution},
  it was shown that there exists a distributive law $\mathrm{pml}\colon \M\D\Rightarrow\D\M$
  called \emph{parallel multinomial law} in the paper.
  To illustrate this $\mathrm{pml}$ intuitively,
  let us assume an element in $\D(X)$ as a jar containing a finite number of elements $x_1,\dots,x_n$ of $X$,
  each of which can be extracted with the probability $p_i$.
  Then, $\mathrm{pml}_X$ is the function that sends a multiset of those jars to
  the probability distribution of multisets resulting from picking an element out from each jar.

  Since the number of elements extracted is the same as the number of jars,
  $\mathrm{pml}$ can be restricted to $\mathrm{pml}^n\colon \M[n]\D\Rightarrow\D\M[n]$.
  This $\mathrm{pml}^n$ makes $\M[n]$ a colax monad morphism $(\M[n],\mathrm{pml}^n)\colon \D\rightarrow \D$,
  and one can show that the lax functor $\M[-]$ lifts
  to $\M[-]\colon\Sigma\N \rightsquigarrow \mathbf{Monad}_{c}(\Set); *\mapsto \D$,
  where $\mathbf{Monad}_c(\Set)$ is the 2-category of monads on $\Set$ and colax morphisms.
  Therefore, $\mathrm{pml}^{(-)}$ defines a generalized distributive law $\overline{\Sigma\N}\otimes_l\bar{\One} \rightarrow \Sigma\mathbf{End}(\Set) \subseteq \Cat$.
\end{example}

\begin{definition}
  A 2-functor of the form $\overline{\C^\co}^\co \otimes_l \barD\rightarrow \K$ will also be called a generalized distributive law.
\end{definition}

When $\C = \D = \One$, this 2-functor is a distributive law $wt \Rightarrow tw$ of a comonad $w$ over a monad $t$.
As in \Cref{prop:generalized_dist_law},
a generalized distributive law $\overline{\C^\co}^\co \otimes_l \barD\rightarrow \K$ is equivalent to
a lax functor $\D\rightsquigarrow \OO{\C,\K}$
or an oplax functor $\C\rightsquigarrow \LL{\D,\K}$.
And when $\C$ and $\D$ are monoidal categories, our distributive law
coincides with the $(\pi,\pi')$-distributive law in \cite{GKOBU_2016_combining_effect_coeffects_via_grading}.

Note that all the combination of distributive laws is presented by two patterns,
$\barC\otimes_l\barD$ and $\overline{\C^\co}^\co \otimes_l \barD$,
because of \Cref{prop:Gray_tensor_property} (v).

Let $\A$ be a 2-category and $T^\mathbf{Fun}$ be the 2-monad on $\TT{\ob(\A), \K}$.
Suppose a lax functor $F\colon\C\rightsquigarrow \lladj{T^\mathbf{Fun}}$ is given.
Such a lax functor consists of the following data.
\begin{itemize}
  \item A lax functor $\widehat{F}\colon\C \rightsquigarrow \TT{\A,\K}$.
  \item For each $A\in\A$, a lax functor $\dot{F}_A\colon\C\rightsquigarrow\K$.
  \item For each $A\in\A$ and $C\in\C$, an adjunction $\dot{F}_{A}C\inlineadj\widehat{F}(C,A)$ in $\K$
        such that the left adjoints defines a strict natural transformation between $\widehat{F}(-,A)$ and $\dot{F}_A$.
        \[
          \begin{tikzpicture}
            \node(dl) at (0,0) {$\widehat{F}(C,A)$};
            \node(dr) at (3,0) {$\widehat{F}(C',A)$};
            \node(ul) at (0,1.5) {$\dot{F}_{A}C$};
            \node(ur) at (3,1.5) {$\dot{F}_{A}C'$};
            \draw[1cell,swap] (dl) to node {$\widehat{F}(f,A)$} (dr);
            \draw[1cell] (ul) to node {$\dot{F}_{A}f$} (ur);
            \draw[1cell] (ul) to (dl);
            \draw[1cell] (ur) to (dr);
          \end{tikzpicture}
        \]
\end{itemize}
Then by post-composing the 2-functor $\I\colon\lladj{T^\mathbf{Fun}} \rightarrow \LO{\A, \K}$ to $F$,
we obtain a generalized distributive law $\barC\otimes_l\barA\rightarrow\K$.
Conversely, \Cref{thm:coreflective_embedding_thm} shows that a generalized distributive law
corresponds to a lax functor $F\colon\C\rightsquigarrow \lladj{T^\mathbf{Fun}}$
such that the adjunction $\dot{F}_{A}C\inlineadj\widehat{F}(C,A)$
is of the form $\dot{F}_{A}C\inlineadj \Q^\mathbf{Fun}_c\dot{F}_{A}C$
induced by the colax morphism classifier,
which is a generalization of the dual of \Cref{cor:fact_dist_is_lift_of_monad}.

\subsection{Composition via distributive law}
For use in this section, let us recall the following two adjunctions with identity counits in $\BiCatLI$,
which we proved just after \Cref{prop:lax_fun_classifier} and
at the end of \Cref{subsec:Gray_tensor_prod} for each.
\[
  \begin{tikzpicture}
    \node(ll) at (0,0) {$\C$};
    \node(lr) at (2,0) {$\barC$};
    \node(rl) at (5,0) {$\A\times \B$};
    \node(rr) at (7.5,0) {$\A\otimes \B$};
    \node[labelsize] at (1,0) {$\bot$};
    \node[labelsize] at (6.25,0) {$\bot$};
    \draw[1cell squig,swap,bend right] (ll) to node {$p$} (lr);
    \draw[1cell squig,swap,bend right] (lr) to node {$q$} (ll);
    \draw[1cell squig,swap,bend right] (rl) to node {$\sigma_l$} (rr);
    \draw[1cell,swap,bend right] (rr) to node {$\rho$} (rl);
  \end{tikzpicture}
\]
For each adjunction, we write the unit icons as $\eta^\A\colon 1\Rightarrow pq$ and
$\hat{\gamma}\colon 1 \Rightarrow \sigma_l\rho$.

\begin{lemma}\label{lem:pre-compose_laxfn_bo_retract_gene}
  Let $\A$ be a 2-category and $\C$ be a bicategory.
  Suppose that a bijective-on-objects lax functor $J\colon\C\rightsquigarrow \A$
  has a retraction $R\colon\A\rightsquigarrow\C$.
  Also, suppose that all the 1-cells in $\A$ are generated from the image of $J$.
  Then, the 2-functor $J^*\colon\Lw{w}{\A,\K} \rightarrow \Lw{w}{\C,\K}$ defined by the pre-composition of $J$ is 2-fully-faithful.
\end{lemma}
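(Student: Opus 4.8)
The plan is to unwind ``2-fully-faithful'' into the assertion that for every pair of lax functors $F,G\colon\A\rightsquigarrow\K$ the induced functor $J^*_{F,G}\colon\Lw{w}{\A,\K}(F,G)\to\Lw{w}{\C,\K}(FJ,GJ)$ is an isomorphism of categories, i.e.\ a bijection on $w$-natural transformations and on modifications. The individual precomposition $2$-functor $J^*$ is unproblematic to define (it is \emph{post}composition by a lax functor that fails to be functorial on transformations, cf.\ \Cref{rem:ww_is_not_2-functorial_in_first_variable}; whiskering $\alpha\mapsto\alpha J$ on the right is mere reindexing), so the whole content is this local statement. I will use three inputs throughout: $J$ is bijective on objects, so components over $\A$ and over $\C$ are indexed by the same set; every $1$-cell of $\A$ is a composite of $1$-cells $Jh$; and $RJ=1_\C$, which makes $J$ injective on cells (if $Jh_1=Jh_2$ then $h_1=RJh_1=RJh_2=h_2$) and provides a strict section $R^*$ of $J^*$.

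First I would prove faithfulness. Given $\psi,\psi'\colon F\Rightarrow G$ with $\psi J=\psi'J$, bijectivity on objects forces the components to agree, $\psi_A=(\psi J)_{RA}=(\psi'J)_{RA}=\psi'_A$, and on a generator the naturality $2$-cells agree, $\psi_{Jh}=(\psi J)_h=(\psi'J)_h=\psi'_{Jh}$. Since the coherence axiom of a $w$-natural transformation expresses the $2$-cell at a composite $g\circ f$ as a fixed pasting of the $2$-cells at $f$ and $g$ with the comparison cells of $F$ and $G$, an induction on the length of a factorization into generators (which exists by the generation hypothesis) yields $\psi_f=\psi'_f$ for all $f$. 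The same objectwise-plus-generator argument gives injectivity on modifications. This step needs only bijectivity on objects, generation, and the coherence axioms; the retraction plays no role here.

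The substance is fullness. Given $\phi\colon FJ\Rightarrow GJ$ I would build $\psi\colon F\Rightarrow G$ with $\psi J=\phi$: define its components by bijectivity on objects, set its naturality $2$-cell on a generator to $\psi_{Jh}:=\phi_h$ (unambiguous precisely because $J$ is injective on $1$-cells), and extend to arbitrary $1$-cells by the composition pasting formula. This manufactures a well-defined $w$-natural transformation on the \emph{free} $2$-category generated by the image of $J$; the genuine obstacle is to show it descends along the quotient presenting $\A$, namely that the pastings assigned to two composites of generators which are equal in $\A$ coincide, and that $\psi$ respects the relations among the $2$-cells of $\A$. Here the retraction is indispensable: each relation of $\A$ is a relation among images of $J$, and applying $R$ (using $RJ=1_\C$) transports it to a relation in $\C$, where the coherence axioms already satisfied by $\phi$ discharge the required equality of $2$-cells. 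I note that the section $R^*$, although it satisfies $J^*R^*=\mathrm{id}$, proves only that $J^*$ is \emph{globally} surjective—$R^*\phi$ is a transformation between $FJR$ and $GJR$, not between $F$ and $G$—so it does not by itself deliver local fullness, which is why the hands-on descent is needed. Fullness on modifications is easier, their data being objectwise: I set $\tilde m_A:=m_{RA}$ and need only verify the modification identity at each $1$-cell of $\A$, which reduces by generation to its validity at generators, i.e.\ to $m$ being a modification over $\C$.

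Finally, these arguments are uniform in $w\in\{s,p,l,c\}$: the flavour only changes the direction (and, for $w=p$, the invertibility) of the naturality $2$-cells and the side on which the comparison cells are pasted, leaving the bookkeeping identical; for $w=s$ the naturality cells are identities and only the component bijection is required. The hard part will be the descent in the fullness step—checking relation by relation that the freely-defined extension respects the presentation of $\A$—and I expect it to be a diagram-chasing matter rather than a conceptual one, with the retraction $R$ together with the coherence of $\phi$ being exactly the mechanism that makes each such check succeed.
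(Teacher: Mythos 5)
Your faithfulness argument and your treatment of modifications are, step for step, the paper's own proof: components are forced by bijectivity on objects, and the naturality cells at a composite of generators are expressed as pastings of the cells at the generators. Where you part ways with the paper is fullness, and the comparison is instructive: the paper's \emph{entire} fullness argument is precisely the retraction one-liner you set aside. It takes $\alpha\colon FJ\Rightarrow GJ$, forms $\alpha R$, and notes $(\alpha R)J=\alpha$; your objection that $\alpha R$ lies in $\Lw{w}{\A,\K}(FJR,GJR)$ rather than in $\Lw{w}{\A,\K}(F,G)$, so that this proves at best a global surjectivity statement, is correct. Repairing the paper's argument would require $FJR=F$ and $GJR=G$, i.e.\ injectivity of $J^*$ on objects; the paper calls this ``clear,'' but for lax-functor objects it is not, since a lax functor is not determined by its values on generating 1-cells, and nothing at all is assumed about the 2-cells of $\A$. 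So your diagnosis of the weak point is sharper than the paper's own write-up.

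Unfortunately your replacement does not close the gap; it relocates it, in two ways. First, the extension step (and, strictly speaking, the faithfulness induction you share with the paper) is legitimate only when the relevant comparison cells are invertible: for $w=l$ the composition axiom determines $\psi_{g_2g_1}\circ(G^2_{g_2,g_1}*1_{\psi_A})$, not $\psi_{g_2g_1}$ itself, so ``extend by the pasting formula'' makes sense only if $G^2$ is invertible (dually $F^2$ for $w=c$). This is harmless in the paper's application (\Cref{prop:AB_EM_on_EM}), where the source objects are 2-functors in $\TL{\barA\otimes_l\barB,\K}$, but not in the stated generality of $\Lw{w}{\A,\K}$. Second, and more seriously, the descent mechanism you invoke is unsound. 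Applying $R$ to a relation $Jf_n\cdots Jf_1=Jf'_m\cdots Jf'_1$ in $\A$ yields only the tautology $Rg=Rg$: because $R$ is merely a \emph{lax} functor, you do not obtain $f_n\cdots f_1=f'_m\cdots f'_1$ in $\C$, only non-invertible comparison 2-cells from each of these composites into $Rg$, so the coherence of $\phi$ over $\C$ cannot be brought to bear to identify the two candidate pastings. Moreover, the constructed $\psi$ must satisfy naturality against \emph{every} 2-cell of $\A$, and the generation hypothesis concerns 1-cells only; $\phi$, which lives entirely over $\C$, carries no information about those 2-cells, and $R$ cannot transport the requirement down, for the same reason. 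So the ``hard part'' you defer is not routine diagram chasing: as proposed it is unproved, and the stated mechanism for it fails. (The same issues, in different places, afflict the paper's own proof; a genuine fix seems to need stronger hypotheses---for instance 2-functor objects and generation of 2-cells---than either you or the paper assume.)
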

\begin{proof}
  The 2-functor $J^*$ is surjective on 1-cells since $\alpha RJ = \alpha$
  for each $w$-natural transformation $\alpha\colon FJ\rightarrow GJ\colon\C\rightsquigarrow\K$.

  Let $\alpha \colon F\Rightarrow G\colon \A\rightsquigarrow \K$ be a $w$-natural transformation.
  Since $J$ is bijective-on-objects, the family of 1-cells $\{\alpha_A\}$ is determined by $\{\alpha_{JC}\}$.
  Let $g$ be a 1-cell in $\A$.
  Then there exists a sequence $f_1,\dots,f_n$ of 1-cells of $\C$ such that $g = Jf_n\dots Jf_1$.
  Such a sequence is composable in $\C$ because $J$ is bijective-on-object.
  Since $\alpha_g$ is a composition of $\alpha_{Jf_i}$, the family of 2-cells ${\{\alpha_g\}}_{g\in\A}$
  is also determined by ${\{\alpha_{Jf}\}}_{f\in\C}$.
  Therefore, $J^*$ is injective on 1-cells.

  It is clear that $J^*$ is injective on objects.
  If $\alpha,\beta \colon F\Rightarrow G\colon \A\rightsquigarrow \K$ are $w$-natural transformations,
  a family $\{\Gamma_A\colon \alpha_A\Rightarrow \beta_A \}$ of 2-cells defines a modification
  if and only if it is compatible with $\alpha_{Jf}$ and $\beta_{Jf}$ for all 1-cells $f$ in $\C$.
  That is because $\alpha_g$ and $\beta_g$ are compositions of some $\alpha_{Jf}$ and $\beta_{Jf}$.
  Therefore, $J^*$ is also surjective on 2-cells.
\end{proof}

In \Cref{eg:T_2-Cat_is_2-monad}, we saw that the 2-category $\BiCatLI$ of bicategories and lax functors is
2-equivalent to $\palg{l}{T_\mathbf{\TwoCat}}$.
Also, in \cite{Lack_2010_2-caty_companion}, it was shown that there is another 2-monad $S_\mathbf{\TwoCat}$
whose 2-category $\alg{l}{S_\mathbf{\TwoCat}}$ is isomorphic to $\palg{l}{T_\mathbf{\TwoCat}}$.
In either point of view, from \cite{Lack_2005_limit_for_lax_morphism,LackShulman_2012_enhanced_limit_for_morphism},
we can conclude that the forgetful functor from $\BiCatLI$ to the 2-categories of $\Cat$-graphs lifts any oplax limits.
In particular, $\BiCatLI$ admits products.

Here, we would like to show that $\sigma_l\colon \A\times\B\rightarrow \A\otimes_l\B$
defines a lax monoidal structure on the inclusion $\iota\colon\TwoCat_0\rightarrow\BiCatLZ$,
and thus the left adjoint $\Q$ in \Cref{eg:T_2-cat_admits_classifier} extends to an oplax monoidal functor.
\[
  \begin{tikzpicture}
    \node(l) at (-2.3,0) {$(\TwoCat_0,\otimes_l)$};
    \node(r) at (2.3,0) {$(\BiCatLZ,\times)$};
    \node[labelsize] at (0,0.3) {$\bot$};
    \draw[1cell,swap] (l) to node {$\iota$} (r);
    \draw[1cell, bend right,swap] (r) to node {$\Q$} (l);
  \end{tikzpicture}
\]
Note that the unit and counit of this adjunction were $p$ and $q$.

\begin{proposition}
  $(\iota, \sigma_l)$ is a normal lax monoidal functor.
\end{proposition}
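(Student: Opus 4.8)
The plan is to exhibit the comparison data explicitly and then verify the three axioms of a lax monoidal functor between the monoidal $1$-categories $(\TwoCat_0,\otimes_l,\One)$ and $(\BiCatLZ,\times,\One)$. The multiplication comparison is the family of normal lax functors $\sigma_l=\sigma_{l,\A,\B}\colon\iota\A\times\iota\B\rightsquigarrow\iota(\A\otimes_l\B)$ constructed in \Cref{subsec:Gray_tensor_prod}, and the unit comparison is the identity $\One\to\iota\One=\One$. Since this unit comparison is an identity it is in particular invertible, so normality is immediate and it only remains to check the naturality of $\sigma_l$, the associativity coherence, and the two unit triangles.

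First I would check naturality. For $2$-functors $F\colon\A\to\A'$ and $G\colon\B\to\B'$ — the morphisms of the source category — I would verify $(F\otimes_l G)\circ\sigma_l=\sigma_l\circ(F\times G)$ as lax functors; here $F\otimes_l G$ is well defined precisely because $F$ and $G$ are strict, sidestepping the failure recorded in the remark closing \Cref{subsec:Gray_tensor_prod} that $\otimes_l$ does not extend to lax functors in either variable. Both composites send $(A,B)$ to $FA\otimes GB$ and a $1$-cell $(f,g)$ to $(FA'\otimes Gg)(Ff\otimes GB)$, and their comparison $2$-cells agree because $F\otimes_l G$ carries each swapping $2$-cell $\gamma_{f,g}$ to $\gamma_{Ff,Gg}$, which is exactly the recipe defining $\sigma_l^2$ on the other side. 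The unit triangles are equally direct: the component $\sigma_{l,\One,\A}$ coincides with the canonical isomorphism $\One\times\A\cong\One\otimes_l\A\cong\A$ arising from the rule \textbf{1-cell id}, so its composite with the unitors and the identity unit comparison is forced to commute.

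The substance of the argument is the associativity coherence, which for an identity unit comparison reduces to the equality of two normal lax functors $(\iota\A\times\iota\B)\times\iota\C\rightsquigarrow\iota(\A\otimes_l(\B\otimes_l\C))$, namely $\iota(\alpha)\circ\sigma_{l,\A\otimes_l\B,\C}\circ(\sigma_{l,\A,\B}\times 1)$ and $\sigma_{l,\A,\B\otimes_l\C}\circ(1\times\sigma_{l,\B,\C})\circ a$, where $\alpha$ is the associator of $\otimes_l$ and $a$ that of $\times$. On objects and on $1$-cells the two functors agree after the rebracketing by $\alpha$, so the entire content lies in comparing their comparison $2$-cells. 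Each such $2$-cell is assembled, through the clauses defining $\sigma_l^2$, as a composite of elementary swapping $2$-cells $\gamma$; the two assemblies have the same source and target and differ only in the order in which the individual swaps are carried out. I would then invoke the well-definedness and uniqueness of the iterated swaps $\hat{\gamma}_h$ and $\tilde{\gamma}_h$ noted in \Cref{subsec:Gray_tensor_prod} — equivalently, the coherence rules \textbf{2-cell swap comp} and \textbf{2-cell swap commute} — to conclude that any two composites of $\gamma$'s between fixed endpoints coincide.

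I expect this last reduction to be the main obstacle: one must keep careful track of which pair of $1$-cells is transposed at each stage of the two bracketings, and confirm that the endpoints of the two composite $2$-cells genuinely match, before appealing to coherence. Once the comparison is framed in terms of the uniqueness of $\hat{\gamma}_h$ and $\tilde{\gamma}_h$, however, no genuinely new computation is required, and the remaining verifications of naturality and of the unit triangles are routine. With $(\iota,\sigma_l)$ established as normal lax monoidal and $\Q\dashv\iota$, the promised oplax monoidal structure on $\Q$ then follows by taking mates.
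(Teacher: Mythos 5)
Your proposal is correct and takes essentially the same route as the paper's proof: exhibit $\sigma_l$ as the multiplication comparison with identity unit comparison, check naturality and the unit triangles directly, and reduce the associativity coherence to the equality of two comparison $2$-cells, each a composite of the elementary swaps $\gamma_{f,g}$, $\gamma_{f,h}$, $\gamma_{g,h}$, which coincide by the coherence rules for swapping $2$-cells. The paper phrases this last step as both sides being composites of the same three swaps, while you invoke the uniqueness of the iterated swaps $\hat{\gamma}_h$, $\tilde{\gamma}_h$; these are the same appeal to the relations \textbf{2-cell swap comp} and \textbf{2-cell swap commute}.
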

\begin{proof}
  Let $\A,\B,\E$ be 2-categories.
  Given a pair of 2-functors $F\colon\A\rightarrow \A'$ and $G\colon\B\rightarrow\B'$,
  one can easily calculate that the diagram below commutes,
  which shows that $\sigma_l\colon \A\otimes\B\rightsquigarrow\A\times\B$ is a natural transformation.
  \[
    \begin{tikzpicture}
      \node(ul) at (-1.2, 1) {$\A\times\B$};
      \node(dl) at (-1.2,0) {$\A'\times\B'$};
      \node(ur) at ( 1.2, 1) {$\A\otimes_l\B$};
      \node(dr) at ( 1.2,0) {$\A'\otimes_l\B'$};
      \draw[1cell,swap] (ul) to node{$F\times G$} (dl);
      \draw[1cell] (ur) to node{$F\otimes_l G$} (dr);
      \draw[1cell squig] (ul) to node{$\sigma_l$} (ur);
      \draw[1cell squig] (dl) to node{$\sigma_l$} (dr);
    \end{tikzpicture}
  \]
  The remaining is to show the axiom of lax functors.
  \[
    \begin{tikzpicture}
      \node(ul) at (-2,3) {$(\A\times\B)\times\E$};
      \node(ml) at (-2,1.5) {$(\A\otimes_l\B)\times\E$};
      \node(dl) at (-2,0) {$(\A\otimes_l\B)\otimes_l\E$};
      \node(ur) at (2,3) {$\A\times(\B\times\E)$};
      \node(mr) at (2,1.5) {$\A\times(\B\otimes_l\E)$};
      \node(dr) at (2,0) {$\A\otimes_l(\B\otimes_l\E)$};
      \draw[1cell squig,swap] (ul) to node{$\sigma_l\times 1$} (ml);
      \draw[1cell squig,swap] (ml) to node{$\sigma_l$} (dl);
      \draw[1cell squig] (ur) to node{$1\times \sigma_l$} (mr);
      \draw[1cell squig] (mr) to node{$\sigma_l$} (dr);
      \draw[1cell] (ul) to node {$\sim$} (ur);
      \draw[1cell] (dl) to node {$\sim$} (dr);
    \end{tikzpicture}
    \qquad
    \begin{tikzpicture}
      \node(ul) at (-1.2, 3) {$\A\times\One$};
      \node(d) at (0,2) {$\A\otimes_l\One$};
      \node(ur) at ( 1.2, 3) {$\A$};
      \draw[1cell,swap] (ul) to node{$\sigma_l$} (d);
      \draw[1cell,swap] (d) to node{$\sim$} (ur);
      \draw[1cell] (ul) to node{$\sim$} (ur);

      \node(Ul) at (-1.2, 1) {$\One\times\A$};
      \node(D) at (0,0) {$\One\otimes_l\A$};
      \node(Ur) at ( 1.2, 1) {$\A$};
      \draw[1cell,swap] (Ul) to node{$\sigma_l$} (D);
      \draw[1cell,swap] (D) to node{$\sim$} (Ur);
      \draw[1cell] (Ul) to node{$\sim$} (Ur);
    \end{tikzpicture}
  \]
  The two on the right are immediate.
  The most non-trivial part on the left is that the following comparison 2-cells
  of the lax functors coincide.
  \[
    \begin{tikzpicture}
      \coordinate(V) at (0,-1.5);
      \coordinate(H) at (1.5,0);
      \coordinate(T) at (1,0.6);
      \node(q) at (0,0) {$\cdot$}; 
      \node(a) at (V) {$\cdot$}; 
      \node(z) at ($(a)+(V)$) {$\cdot$}; 
      \node(s) at ($(a)+(H)$) {$\cdot$}; 
      \node(x) at ($(z)+(H)$) {$\cdot$}; 
      \node(c) at ($(x)+(H)$) {$\cdot$}; 
      \node(e) at ($(s)+(T)$) {$\cdot$}; 
      \node(d) at ($(e)+(V)$) {$\cdot$}; 
      \node(f) at ($(d)+(H)$) {$\cdot$}; 
      \node(t) at ($(f)+(T)$) {$\cdot$}; 
      \draw[1cell,auto=false, swap] (q) to node[fill=white]{$f$} (a);
      \draw[1cell,auto=false, swap] (a) to node[fill=white]{$f'$} (z);
      \draw[1cell,auto=false,swap]  (z) to node[fill=white]{$g$} (x);
      \draw[1cell,auto=false,swap]  (x) to node[fill=white]{$g'$} (c);
      \draw[1cell,auto=false]       (a) to node[fill=white]{$g$} (s);
      \draw[1cell,auto=false]       (e) to node[fill=white]{$f'$} (d);
      \draw[1cell,auto=false]       (d) to node[fill=white]{$g'$} (f);
      \draw[1cell,swap]             (c) to node[]{$h$} (f);
      \draw[1cell,swap]             (f) to node[]{$h'$} (t);
      \draw[1cell]                  (s) to node[]{$h$} (e);
      \draw[1cell] (s) to (x);
      \draw[dashed] (x) to (d);
      \twocelld{$(a)!0.5!(x)$}{}
      \twocelld{$(d)!0.25!(x)$}{}
    \end{tikzpicture}
    \qquad
    \begin{tikzpicture}
      \coordinate(V) at (0,-1.5);
      \coordinate(H) at (1.5,0);
      \coordinate(T) at (1,0.6);
      \node(q) at (0,0) {$\cdot$}; 
      \node(a) at (V) {$\cdot$}; 
      \node(z) at ($(a)+(V)$) {$\cdot$}; 
      \node(s) at ($(a)+(H)$) {$\cdot$}; 
      \node(x) at ($(z)+(H)$) {$\cdot$}; 
      \node(c) at ($(x)+(H)$) {$\cdot$}; 
      \node(e) at ($(s)+(T)$) {$\cdot$}; 
      \node(d) at ($(e)+(V)$) {$\cdot$}; 
      \node(f) at ($(d)+(H)$) {$\cdot$}; 
      \node(t) at ($(f)+(T)$) {$\cdot$}; 
      \draw[1cell,auto=false, swap] (q) to node[fill=white]{$f$} (a);
      \draw[1cell,auto=false, swap] (a) to node[fill=white]{$f'$} (z);
      \draw[1cell,auto=false,swap]  (z) to node[fill=white]{$g$} (x);
      \draw[1cell,auto=false,swap]  (x) to node[fill=white]{$g'$} (c);
      \draw[1cell,auto=false]       (a) to node[fill=white]{$g$} (s);
      \draw[1cell,auto=false]       (e) to node[fill=white]{$f'$} (d);
      \draw[1cell,auto=false]       (d) to node[fill=white]{$g'$} (f);
      \draw[1cell,swap]             (c) to node[]{$h$} (f);
      \draw[1cell,swap]             (f) to node[]{$h'$} (t);
      \draw[1cell]                  (s) to node[]{$h$} (e);
      \draw[1cell] (x) to (d);
      \draw[dashed] (s) to (x);
      \twocelldr{$(d)!0.5!(c)$}{}
      \twocelldl{$(s)!0.5!(x)$}{}
    \end{tikzpicture}
  \]
  But since both are the composition of $\gamma_{f,g}$, $\gamma_{f,h}$, and $\gamma_{g,h}$,
  these two are the same 2-cells.
\end{proof}

Since the right adjoint $\iota$ has a structure of a lax monoidal functor,
there is a structure of an oplax monoidal functor on the left adjoint $\Q$,
where the comparison map is defined by
\[
  \begin{tikzpicture}
    \node(a) at (0,0) {$\overline{\C\times\D}$};
    \node(s) at (3,0) {$\overline{\barC\times\barD}$};
    \node(d) at ($(s)+(s)$) {$\overline{\barC\otimes\barD}$};
    \node(f) at ($(d)+(s)$) {$\barC\otimes\barD.$};
    \draw[1cell] (a) to node {$\Q(p\times p)$}(s);
    \draw[1cell] (s) to node {$\Q\sigma_l$}(d);
    \draw[1cell] (d) to node {$q$}(f);
  \end{tikzpicture}
\]
Since $qp = 1$, this is the unique 2-functor at the bottom right.
\[
  \begin{tikzpicture}
    \node(a) at (0,0) {${\C\times\D}$};
    \node(s) at (2.5,0) {${\barC\times\barD}$};
    \node(d) at ($(s)+(s)$) {${\barC\otimes\barD}$};
    \node(X) at ($(s)+(0,-1)$) {$\overline{\C\times\D}$};
    \draw[1cell squig] (a) to node {$p\times p$}(s);
    \draw[1cell squig] (s) to node {$\sigma_l$}(d);
    \draw[1cell squig,swap] (a) to node {$p$}(X);
    \draw[1cell,swap] (X) to node {$ $}(d);
  \end{tikzpicture}
\]
\begin{theorem}
  For each bicategory $\C$,
  the lax functor classifier $\barC$ is a comonoid for lax Gray tensor product $\otimes_l$ where the comultiplication
  $\delta\colon\barC\rightarrow\barC\otimes_l\barC$ is the unique 2-functor such that $\delta p$ is equal to $\sigma_l\Delta p$.
  \[
    \begin{tikzpicture}
      \node(f) at (0,0) {$\C$};
      \node(a) at (1.5,0) {$\barC$};
      \node(s) at (3.5,0) {$\barC\times\barC$};
      \node(d) at (6,0) {$\barC\otimes\barC$};
      \draw[1cell squig] (f) to node {$p$}(a);
      \draw[1cell] (a) to node {$\Delta$}(s);
      \draw[1cell squig] (s) to node {$\sigma_l$}(d);
    \end{tikzpicture}
  \]
\end{theorem}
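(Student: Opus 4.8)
The plan is to derive the comonoid structure on $\barC$ from the oplax monoidal structure on the left adjoint $\Q$ established just before the statement, rather than verifying coassociativity and the counit laws by hand. The key general fact is that an oplax monoidal functor preserves comonoids: if $F\colon(\mathcal M,\otimes,I)\to(\mathcal N,\boxtimes,J)$ is oplax monoidal with comparison maps $F^2_{A,B}\colon F(A\otimes B)\to FA\boxtimes FB$ and $F^0\colon FI\to J$, and $(M,\delta_M,\varepsilon_M)$ is a comonoid in $\mathcal M$, then $FM$ becomes a comonoid in $\mathcal N$ with comultiplication $F^2_{M,M}\circ F\delta_M$ and counit $F^0\circ F\varepsilon_M$, the comonoid axioms following from those for $M$ together with the coherence of the oplax structure (this is dual to the familiar fact that lax monoidal functors preserve monoids). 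I would apply this with $F=\Q\colon(\BiCatLZ,\times)\to(\TwoCat_0,\otimes_l)$, which carries an oplax monoidal structure by the discussion preceding the theorem, with comparison map $\Q^2_{\C,\C}$ as displayed there.

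Since $(\BiCatLZ,\times)$ is cartesian monoidal, with unit the terminal object $\One$, every bicategory $\C$ carries a canonical comonoid structure: the comultiplication is the diagonal $\Delta_\C\colon\C\to\C\times\C$ and the counit is the unique lax functor $!_\C\colon\C\to\One$. Applying $\Q$ then exhibits $\barC=\Q\C$ as a comonoid in $(\TwoCat_0,\otimes_l,\One)$, with comultiplication $\Q^2_{\C,\C}\circ\Q\Delta_\C$ and counit $\Q^0\circ\Q(!_\C)$; the latter is necessarily the unique $2$-functor $\barC\to\One$, since $\One$ is terminal in $\TwoCat_0$.

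It then remains to identify the induced comultiplication $\Q^2_{\C,\C}\circ\Q\Delta_\C$ with the $\delta$ of the statement. By the classifier isomorphism $\TwoCatTI(\barC,\K)\iso\BiCatLI(\C,\K)$ of \Cref{eg:T_2-cat_admits_classifier}, precomposition with $p$ is injective on $2$-functors out of $\barC$, so it suffices to compare both sides after precomposing with $p$. First I would invoke naturality of the unit $p$ of the adjunction $\Q\dashv\iota$ at the morphism $\Delta_\C$, giving $\Q\Delta_\C\circ p_\C=p_{\C\times\C}\circ\Delta_\C$. Then the defining property of the comparison map, namely that $\Q^2_{\C,\C}$ is the unique $2$-functor with $\Q^2_{\C,\C}\circ p_{\C\times\C}=\sigma_l\circ(p\times p)$, yields $\Q^2_{\C,\C}\circ\Q\Delta_\C\circ p_\C=\sigma_l\circ(p\times p)\circ\Delta_\C$. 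Finally, naturality of the diagonal gives $(p\times p)\circ\Delta_\C=\Delta_{\barC}\circ p_\C$, so the whole composite equals $\sigma_l\,\Delta\,p$, which is exactly the equation defining $\delta$. Hence $\Q^2_{\C,\C}\circ\Q\Delta_\C=\delta$, and $\barC$ is a comonoid with the stated comultiplication.

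The only genuinely nontrivial input is the oplax monoidal coherence of $\Q$, but that has already been supplied, so the remaining work is the formal preservation of comonoids plus the short diagram chase above, reduced entirely to properties of $p$, $\sigma_l$, and $\Delta$. If one insisted on a self-contained argument avoiding the preservation lemma, the main obstacle would instead be verifying coassociativity directly: after precomposing with $p$ one must compare the two composites $\barC\to\barC\otimes_l\barC\otimes_l\barC$, and this forces one to track the associator of $\otimes_l$ together with the swapping $2$-cells $\gamma$ built into $\sigma_l$. The conceptual route through oplax monoidality is precisely what lets me sidestep that bookkeeping.
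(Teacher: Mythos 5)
Your proposal is correct and follows essentially the same route as the paper's own proof: both derive the comonoid structure on $\barC$ by applying the preservation of comonoids under the oplax monoidal functor $\Q$ to the unique (diagonal) comonoid structure on $\C$ in the Cartesian monoidal category $\BiCatLZ$, and then identify the resulting comultiplication with $\delta$ by precomposing with the unit $p$. The only difference is one of detail: where the paper simply asserts that precomposition with $p$ yields $\sigma_l(p\times p)\Delta = \sigma_l\Delta p$, you spell out the underlying chase (naturality of $p$ at $\Delta_\C$, the defining property $\Q^2\circ p_{\C\times\C} = \sigma_l\circ(p\times p)$, and naturality of the diagonal), together with the injectivity of $(-)\circ p$ that justifies the word ``unique.''
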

\begin{proof}
  Since an oplax monoidal functor preserves any comonoid,
  and every object $\C$ in the Cartesian monoidal category $\BiCatLZ$ has a unique comonoid structure,
  $\barC$ has a comonoid structure where the comultiplication is defined by
  \[
    \begin{tikzpicture}
      \node(q) at (-2.5,0) {$\barC$};
      \node(a) at (0,0) {$\overline{\C\times\C}$};
      \node(s) at (3,0) {$\overline{\barC\times\barC}$};
      \node(d) at ($(s)+(s)$) {$\overline{\barC\otimes\barC}$};
      \node(f) at ($(d)+(s)$) {$\barC\otimes\barC.$};
      \draw[1cell] (q) to node {$\Q \Delta$} (a);
      \draw[1cell] (a) to node {$\Q(p\times p)$}(s);
      \draw[1cell] (s) to node {$\Q\sigma_l$}(d);
      \draw[1cell] (d) to node {$q$}(f);
    \end{tikzpicture}
  \]
  When the unit $p$ is precomposed, this is equal to $\sigma_l(p\times p)\Delta = \sigma_l\Delta p$.
\end{proof}

Since $\TL{-,-}$ defines a strong monoidal functor
$(\TwoCat^\op_0,\otimes_l)\rightarrow(\TT{\TwoCat, \TwoCat}_0, \circ)$,
we conclude the following.

\begin{corollary}
  For each small bicategory $\C$, $\TL{\barC, -}\colon \TwoCat\rightarrow\TwoCat$ defines a 2-monad.
  In particular, if $\C = \One$, this is the 2-monad $\mnd_l(-)$ on $\TwoCat$.
\end{corollary}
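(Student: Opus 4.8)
The plan is to deduce the corollary formally from two results already in hand. The preceding theorem equips the lax functor classifier $\barC$ with the structure of a comonoid in the monoidal category $(\TwoCat_0, \otimes_l, \One)$, its comultiplication being the 2-functor $\delta\colon \barC \to \barC \otimes_l \barC$ and its counit the 2-functor $\barC \to \One$ obtained by transporting the canonical Cartesian comonoid on $\C$ along the oplax monoidal functor $\Q$. A comonoid in $(\TwoCat_0, \otimes_l, \One)$ is precisely a monoid in the opposite monoidal category $(\TwoCat^{\op}_0, \otimes_l, \One)$. Since $\TL{-,-}\colon (\TwoCat^{\op}_0, \otimes_l) \to (\TT{\TwoCat, \TwoCat}_0, \circ)$ is strong monoidal (as recorded just above), and strong monoidal functors preserve monoids, its value $\TL{\barC, -}$ is a monoid in $(\TT{\TwoCat, \TwoCat}_0, \circ)$. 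A monoid for the composition monoidal structure on the endo-2-functors of $\TwoCat$ is exactly a 2-monad, which gives the claim.

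Concretely, I would unwind the transported data using the structural isomorphism $\TL{\A \otimes_l \B, -} \iso \TL{\B, -}\circ\TL{\A, -}$ of \Cref{prop:Gray_tensor_property}(i) together with the unit identification $\TL{\One, -} \iso \Id_{\TwoCat}$. Writing $T \coloneqq \TL{\barC, -}$, the image of $\delta$ under $\TL{-,-}$ is a 2-natural transformation $T \circ T \Rightarrow T$ and the image of the counit is a 2-natural transformation $\Id_{\TwoCat} \Rightarrow T$. The associativity and unit laws for this multiplication and unit are nothing but the images of the coassociativity and counit axioms for the comonoid $\barC$; because $\TL{-,-}$ is strong monoidal they transport verbatim, so no genuine diagram-chase is needed.

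For the special case $\C = \One$, I would trace the natural isomorphisms $\TL{\overline{\One}, \K} \iso \LL{\One, \K} \iso \mnd_l(\K)$, the first from \Cref{prop:lax_fun_classifier} (with $w = l$) and the second from \Cref{prop:caty_of_mnd}. It then remains to see that the comultiplication and counit on $\overline{\One}$ correspond under these isomorphisms to the multiplication and unit of $\mnd_l(-)$, namely to composition of monads via the canonical distributive law and to the formation of the identity monad, so that $\TL{\overline{\One}, -} \iso \mnd_l(-)$ as 2-monads.

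The only point demanding real care — rather than a substantive obstacle — is the dictionary between a monoid in $(\TT{\TwoCat, \TwoCat}_0, \circ)$ and a 2-monad: one must confirm that the composition monoidal product is exactly the substitution appearing in the monad axioms, and that the structure transformations are genuinely 2-natural rather than merely natural. The latter is automatic because $\TL{-,-}$ takes values in $\TT{\TwoCat, \TwoCat}$, whose 1-cells are 2-natural transformations. With this identification in place, the corollary is a direct application of monoid-preservation under a strong monoidal functor.
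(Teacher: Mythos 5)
Your proposal is correct and takes essentially the same route as the paper: the corollary is obtained by applying the strong monoidal functor $\TL{-,-}\colon(\TwoCat^\op_0,\otimes_l)\rightarrow(\TT{\TwoCat,\TwoCat}_0,\circ)$ to the comonoid structure on $\barC$ from the preceding theorem, using that strong monoidal functors preserve monoids and that a monoid in $(\TT{\TwoCat,\TwoCat}_0,\circ)$ is precisely a 2-monad. Your treatment of the case $\C=\One$ via $\TL{\overline{\One},\K}\iso\LL{\One,\K}\iso\mnd_l(\K)$ is likewise exactly the paper's identification.
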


The multiplication
$\TL{\Dist,\K} \iso \mnd_l(\mnd_l(\K)) \rightarrow \mnd_l(\K)$
of the monad $\mnd_l(-)$ was given by the composition of monads.
Therefore, we define a generalization of the composition of monads to lax functors as follows.

\begin{definition}
  Given a distributive law $\Gamma\colon \barC\otimes\barC\rightarrow \K$,
  we call the following lax functor the \emph{composition via $\Gamma$}.
  \[
    \begin{tikzpicture}
      \node(f) at (0,0) {$\C$};
      \node(a) at (1.5,0) {$\barC$};
      \node(s) at (3.5,0) {$\barC\times\barC$};
      \node(d) at (6,0) {$\barC\otimes\barC$};
      \node(p) at (8.5,0) {$\barC\otimes\barC$};
      \draw[1cell squig] (f) to node {$p$}(a);
      \draw[1cell] (a) to node {$\Delta$}(s);
      \draw[1cell squig] (s) to node {$\sigma_l$}(d);
      \draw[1cell] (d) to node {$\Gamma$}(p);
    \end{tikzpicture}
  \]
\end{definition}

As in the remark after \Cref{prop:generalized_dist_law},
a distributive law $\barC\otimes_l\barC\rightarrow\K$ can be seen as families of lax functors $\Gamma(C,-),\Gamma(-,C)\colon\C\rightsquigarrow\K$
together with 2-cells
\[
  \begin{tikzpicture}
    \node(dl) at (0,0) {$\Gamma_{C_1'C_2}$};
    \node(dr) at (3,0) {$\Gamma_{C_1'C_2'}$};
    \node(ul) at (0,2) {$\Gamma_{C_1C_2}$};
    \node(ur) at (3,2) {$\Gamma_{C_1C_2'}$};
    \draw[1cell]      (ul) to node {$\Gamma(C_1,g)$} (ur);
    \draw[1cell,swap] (dl) to node {$\Gamma(C_1',g)$} (dr);
    \draw[1cell,swap] (ul) to node {$\Gamma(f,C_2)$} (dl);
    \draw[1cell] (ur) to node {$\Gamma(f,C_2')$} (dr);
    \twocelldl{1.5,1}{$\gamma_{f,g}$}
  \end{tikzpicture}
\]
satisfying some coherence conditions.
Let $H\colon\C\rightsquigarrow\K$ be the composed lax functor via the distributive law.
Unpacking the definition of $H$,
we observe that $H$ maps $C$ to $\Gamma(C,C)$ and  $f\colon C\rightarrow C'$ to $\Gamma(C',g)\Gamma(f,C)$,
and the multiplication $H^2$ is defined by the following 2-cell.
\[
  \begin{tikzpicture}
    \node(a) at (0,4) {$\Gamma_{C_1C_1}$};
    \node(b) at (6,0) {$\Gamma_{C_3C_3}$};
    \node(s) at (0,2) {$\Gamma_{C_2C_1}$};
    \node(u) at (3,2) {$\Gamma_{C_2C_2}$};
    \node(d) at (0,0) {$\Gamma_{C_3C_1}$};
    \node(t) at (3,0) {$\Gamma_{C_3C_2}$};
    \draw[1cell] (a) to node {$\Gamma(f_1,C_1)$} (s);
    \draw[1cell] (t) to node {$\Gamma(C_3,f_2)$} (b);
    \draw[1cell] (s) to node {$\Gamma(C_2,f_1)$} (u);
    \draw[1cell,pos=0.6,sloped] (s) to node {$\Gamma(f_2,C_1)$} (d);
    \draw[1cell] (u) to node {$\Gamma(f_2,C_2)$} (t);
    \draw[1cell] (d) to node {$\Gamma(C_3,f_1)$} (t);
    \draw[1cell,bend right=90,swap] (a) to node {$\Gamma(f_2f_1,C_1)$} (d);
    \draw[1cell,bend right,swap] (d) to node {$\Gamma(C_3,f_2f_1)$} (b);
    \twocelldl{1.8,1.2}{$\gamma_{f_1,f_2}$}
    \twocelll[swap]{-0.9,2.3}{${\Gamma(-,C_1)}^2$}
    \twocelld{2.7,-0.6}{${\Gamma(C_3,-)}^2$}
  \end{tikzpicture}
\]

Let $\A$ be a small 2-category.
Recall that the lax morphism coclassifier $\R F \colon \A\rightarrow\K$ of a lax functor $F\colon\A\rightsquigarrow\K$
was a generalization of Eilenberg-Moore objects,
and $\R$ defines the right adjoint of the inclusion $\TT{\A,\K} \rightarrow \LL{\A,\K}$.

\begin{proposition}\label{prop:AB_EM_on_EM}
  Let $\Gamma\colon \barA\otimes_l\barB\rightarrow \K$ be a generalized distributive law
  and $\Gamma'\colon \B\rightsquigarrow \LL{\A,\K}$ be the corresponding lax functor.
  The following 2-functors from $\A\times \B$ to $\K$ are all isomorphic.
  \begin{enumerate}
    \item The transpose of $\B\xrightarrow{\R \Gamma'} \LL{\A,\K}\xrightarrow{\R} \TT{\A,\K}$.
    \item The transpose of the coclassifier of
          $\begin{tikzpicture}[baseline={(0,-0.1)}]
              \node(l) at (-1.5,0) {$\B$};
              \node(m) at (0,0) {$\LL{\A,\K}$};
              \node(r) at (2,0) {$\TT{\A,\K}$};
              \draw[1cell squig] (l) to node{$\Gamma'$} (m);
              \draw[1cell] (m) to node{$\R$} (r);
            \end{tikzpicture}$.
    \item The coclassifier of
          $\begin{tikzpicture}[baseline={(0,-0.1)}]
              \node(l) at (-2,0) {$\A\times\B$};
              \node(m) at (0,0) {$\barA\otimes_l\barB$};
              \node(r) at (1.5,0) {$\K$};
              \draw[1cell squig] (l) to node{$J$} (m);
              \draw[1cell] (m) to node{$\Gamma$} (r);
          \end{tikzpicture}$, where $J$ be the composite $\sigma_l(p \times p)$.
  \end{enumerate}
\end{proposition}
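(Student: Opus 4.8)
The plan is to prove the two isomorphisms (i)$\iso$(ii) and (i)$\iso$(iii) separately; together they give the three-way isomorphism. Throughout I write $i_\A\colon\TT{\A,\K}\hookrightarrow\LL{\A,\K}$ for the inclusion and $\R$ for its right adjoint, the lax morphism coclassifier of \Cref{eg:T_Fun_has_Q_and_adjoint}, and similarly $i_\B,i_{\A\times\B}$ with right adjoints $\R^{(\B)},\R^{(\A\times\B)}$; all of these exist under the standing completeness hypothesis on $\K$. Using \Cref{prop:Gray_tensor_property}(i) together with \Cref{prop:lax_fun_classifier} I fix the identification $\TL{\barA\otimes_l\barB,\K}\iso\LL{\B,\LL{\A,\K}}$ carrying $\Gamma$ to $\Gamma'$, and I use the Cartesian transpose $\TT{\A\times\B,\K}\iso\TT{\B,\TT{\A,\K}}$ for the two "transpose" operations in (i) and (ii).

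For (i)$\iso$(ii) I would argue purely by uniqueness of composite adjoints. By the functoriality of $\LL{\B,-}$ and $\TT{\B,-}$ as $2$-functors $\TwoCAT\to\TwoCAT$, the adjunction $i_\A\dashv\R$ is transported to adjunctions $\TT{\B,i_\A}\dashv\TT{\B,\R}$ and $\LL{\B,i_\A}\dashv\LL{\B,\R}$. The square of inclusions commutes, both composites being the evident "strict-into-lax in both variables" functor $i_\B\circ\TT{\B,i_\A}=\LL{\B,i_\A}\circ i_\B=:L\colon\TT{\B,\TT{\A,\K}}\to\LL{\B,\LL{\A,\K}}$. Hence the right adjoint of $L$ may be computed along either path, and by uniqueness of adjoints $\TT{\B,\R}\circ\R^{(\B)}\iso\R^{(\B)}\circ\LL{\B,\R}$. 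Evaluating at $\Gamma'$ and transposing yields exactly constructions (i) and (ii).

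For (i)$\iso$(iii) I would exploit the comparison $J=\sigma_l(p\times p)\colon\A\times\B\rightsquigarrow\barA\otimes_l\barB$. First note that $J$ is bijective on objects, that its image generates all $1$-cells, and that it has a retraction $R\coloneqq(q\times q)\rho$: indeed $RJ=(q\times q)(\rho\sigma_l)(p\times p)=(qp)\times(qp)=\id$ since $\rho\sigma_l=\id$ and $qp=1$, where $q$ is a genuine $2$-functor because its domain $\barA$ (resp.\ $\barB$) is a lax functor classifier of a $2$-category. Thus \Cref{lem:pre-compose_laxfn_bo_retract_gene} applies and $J^{*}=(-\circ J)\colon\TL{\barA\otimes_l\barB,\K}\to\LL{\A\times\B,\K}$ is $2$-fully-faithful, with $J^{*}\Gamma=\Gamma J$. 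I would then establish the isomorphism by Yoneda, testing against an arbitrary $2$-functor $K\colon\A\times\B\to\K$. Writing $\widetilde K$ for the transpose of $K$ and $G$ for the value at $\Gamma'$ of the right adjoint of $L$ (that is, construction (i) before transposing), the coclassifier adjunction $i_{\A\times\B}\dashv\R^{(\A\times\B)}$, the identity $i_{\A\times\B}K=J^{*}(KR)$ (valid because $RJ=\id$, so $KR$ is a strict extension of $K$ along $R$ restricting back to $K$), the full faithfulness of $J^{*}$, and the adjunction $L\dashv(\R^{(\B)}\circ\LL{\B,\R})$ of the previous paragraph combine into
\[
  \TT{\A\times\B,\K}\bigl(K,\R^{(\A\times\B)}(\Gamma J)\bigr)\iso\LL{\A\times\B,\K}\bigl(i_{\A\times\B}K,\Gamma J\bigr)\iso\TL{\barA\otimes_l\barB,\K}(KR,\Gamma)\iso\TT{\B,\TT{\A,\K}}(\widetilde K,G).
\]
Since $G$ is construction (i) before transposing, transposing back identifies the representing objects, and Yoneda forces $\R^{(\A\times\B)}(\Gamma J)$ to be the transpose of construction (i).

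The main obstacle is the coherence hidden in the third isomorphism of that chain: verifying that, under the canonical identifications above, the strict $2$-functor $KR$ corresponds to the image $L\widetilde K$ of the transpose under the strict-into-lax inclusion $L$, so that the adjunction $L\dashv(\R^{(\B)}\circ\LL{\B,\R})$ can legitimately be invoked. This reduces to checking that $R=(q\times q)\rho$, which collapses every swapping $2$-cell $\gamma_{f,g}$ to an identity, corresponds to a datum with trivial laxity in both variables; it is a diagram chase through the definitions of $\rho$, $\sigma_l$ and the Gray/classifier isomorphisms, but it is where all the bookkeeping concentrates. Two secondary points must also be confirmed: that $J$ genuinely satisfies the hypotheses of \Cref{lem:pre-compose_laxfn_bo_retract_gene}, in particular that the $1$-cells of $\barA\otimes_l\barB$ are generated by the image of $J$; and that the coclassifiers $\R^{(\B)}$ with values in both $\LL{\A,\K}$ and $\TT{\A,\K}$ exist, which follows from the completeness of $\K$.
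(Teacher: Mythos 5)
Your proposal is correct and follows essentially the same route as the paper: both arguments identify (i) and (ii) as the value at $\Gamma'$ of the right adjoint of the composite inclusion $\TT{\B,\TT{\A,\K}}\rightarrow\LL{\B,\LL{\A,\K}}$, and both reduce (iii) to this by showing $J^{*}$ is 2-fully-faithful via \Cref{lem:pre-compose_laxfn_bo_retract_gene}, using exactly the same verification ($J$ bijective on objects, retraction $(q\times q)\rho$, image generating all 1-cells). Your Yoneda-style chain of hom-isomorphisms is just an unwound form of the paper's direct use of the unit isomorphism of the fully faithful left adjoint $J^{*}$, and the bookkeeping you flag as the main obstacle (that $KR$ corresponds to $L\widetilde K$ under the Gray/classifier identifications) is likewise implicit in the paper's claim that the inclusion $\LL{\B,\LL{\A,\K}}\rightarrow\LL{\A\times\B,\K}$ is isomorphic to $J^{*}$.
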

\begin{proof}
  The right adjoint of the inclusion $\TT{\B,\TT{\A,\K}}\rightarrow\LL{\B,\LL{\A,\K}}$ sends $\Gamma'$ to (i) and (ii).
  On the other hand, (iii) is where $\Gamma J$ is mapped to by
  the right adjoint of the inclusion $\TT{\B,\TT{\A,\K}}\rightarrow\LL{\B,\LL{\A,\K}} \rightarrow \LL{\A\times\B, \K}$.
  To show that (iii) coincides with (i) and (ii), it suffices to show that $\LL{\B,\LL{\A,\K}} \rightarrow \LL{\A\times\B, \K}$ is fully-faithful.
  This inclusion is isomorphic to the pre-composition 2-functor $J^*\colon \TL{\barA\otimes \barB,\K} \rightarrow \LL{\A\times \B,\K}$.
  Since $J$ is the composition $\sigma_l(p\times p)$,
  $J$ is bijective-on-objects, has a retract $(q\times q)\rho$, and its image of 1-cells generates all 1-cells in $\barA\otimes \barB$.
  Therefore, from \Cref{lem:pre-compose_laxfn_bo_retract_gene}, $J^*$ is fully-faithful.
\end{proof}

\begin{remark}
  Let $\Gamma\colon\barA\otimes_l\barA\rightarrow \K$ be a generalized distributive law
  and $F\colon \A\rightsquigarrow\K$ be the composition via $\Gamma$.
  Suppose that $\widehat{\Gamma}\colon\A\times\A\rightarrow \K$ is the 2-functor constructed in \Cref{prop:AB_EM_on_EM}.
  Then, we have two 2-functors $\A\Rightarrow\K$: one is the coclassifier of $F$, and the other is $\widehat{\Gamma}\Delta$.
  The former is $\R(\Gamma J\Delta)$, and the latter is $\R(\Gamma J)\Delta$, so these do not need to coincide.
  When $\A = \One$, since $\Delta\colon\One\rightarrow\One\times\One$ was an isomorphism of 2-categories, these were the same.
\end{remark}



\printbibliography

\end{document}